\pgfplotsset{compat=1.6}
\theoremstyle{plain}%
\newtheorem{theorem}{Theorem}[section]
\newtheorem{lemma}[theorem]{Lemma}
\newtheorem{proposition}[theorem]{Proposition}
\newtheorem*{conjecture*}{Conjecture}
 \numberwithin{equation}{section}
\theoremstyle{definition}
\theoremstyle{remark}
 \let \leq \leqslant
 \let \geq \geqslant
\DeclareMathOperator{\support}{supp}
\DeclareMathOperator{\dist}{dist}
\definecolor{detailcolor00}{rgb}{0.4405, 0.204, 0.343}
\definecolor{detailcolor01}{rgb}{0.546, 0.215, 0.352}
\definecolor{detailcolor02}{rgb}{0.675, 0.247, 0.387} 
\definecolor{detailcolor03}{rgb}{0.775, 0.317, 0.455}
\definecolor{detailcolor04}{rgb}{0.830, 0.421, 0.553} 
\definecolor{detailcolor05}{rgb}{0.831, 0.533, 0.663}
\definecolor{detailcolor06}{rgb}{0.779, 0.619, 0.775}
\definecolor{detailcolor07}{rgb}{0.724, 0.694, 0.827}
\definecolor{detailcolor08}{rgb}{0.687, 0.770, 0.880}
\definecolor{detailcolor09}{rgb}{0.671, 0.839, 0.904}
\definecolor{detailcolor10}{rgb}{0.659, 0.872, 0.882}
\newcommand\pig[1]{\scalerel*[5.5pt]{\Big#1}{%
  \ensurestackMath{\addstackgap[1.5pt]{\big#1}}}}
\newcommand\pigl[1]{\mathopen{\pig{#1}}}
\newcommand\pigr[1]{\mathclose{\pig{#1}}}
\title{Ornstein-Zernike decay of Wilson line observables in the free phase of the \( \mathbb{Z}_2 \) lattice Higgs model}
\author{Malin P. Forsstr\"om\thanks{Chalmers University of Technology and University of Gothenburg} \thanks{Email: \url{palo@chalmers.se}}}
\begin{document}

\maketitle 

\begin{abstract} 
	In the physics literature, the Wilson line observable is believed to have a phase transition between a region with pure exponential decay and a region with Ornstein-Zernike type corrections. In~\cite{f2024b}, we confirmed the first part of this prediction. In this paper, we complement these results by showing that if \( \kappa \) is small and \( \beta \) large compared to the length of the line, then Wilson line expectations have exponential decay with Ornstein-Zernike type behavior. 
\end{abstract}

\section{Introduction}

Lattice gauge theories are a family of spin models on lattices, introduced by~\cite{w1974} as a discretization of the Yang-Mills model in physics. They were also independently introduced by Wegner in~\cite{w1974} as an example of a family of models with both local symmetries and phase transitions. In this paper, we consider the Ising lattice Higgs model, which is a lattice gauge theory, with spins in \( \mathbb{Z}_2, \) coupled to an external field which is a simple model of a Higgs field.

Let \( B_N = [-N,N]^m \cap \mathbb{Z}^m. \)
For an abelian group \( G  \) known as the stryctyre group, we let \( \Omega_1(B_N,G) \) denote the set of all \( G \)-valued 1-forms on the set \( C_1(B_N)\) of oriented edges in \( B_N,\) that is, the set of all functions \( \sigma \colon C_1(B_N) \to G \) such that for all \( e \in C_1(B_N), \) we have \( \sigma(e) = - \sigma(-e).\) In this paper, we only consider \( G = \mathbb{Z}_2, \) and in this case, we thus have \( \sigma(e) = \sigma(-e) \) for all \( e \in C_1(B_N) \) and \( \sigma \in \Omega_1(B_N,\mathbb{Z}_2). \) We let \( \rho\) be the representation of \( \mathbb{Z}_2 \) with \( \rho(0) = 1 \) and \( \rho(1) = -1 \) which maps the additive group \(\mathbb{Z}_2 \) into a multiplicative group. For \( \beta, \kappa \geq 0, \) we define the \emph{Ising lattice Higgs model} by
\begin{equation}\label{eq: model def}
	\mu_{N,\beta,\kappa}(\sigma) \coloneqq Z^{-1}_{N,\beta,\kappa} e^{\beta \sum_{p \in C_2(B_N)} \rho(d\sigma(p)) + \kappa \sum_{e \in C_1(B_N)} \rho(\sigma(e))},\quad \sigma \in \Omega^1(B_N,G),
\end{equation}
where \( Z_{N,\beta,\kappa} \) is a normalizing constant.
For local functions \( f(\sigma,) \) we let \( \mathbb{E}_{N,\beta,\kappa}[f(\sigma)] \) denote the expectation of \( f(\sigma) \) with respect to this measure, and let \( \langle f(\sigma) \rangle_{\beta,\kappa} \) to denote the limit of this expectation as \( N \to \infty. \) The existence and translation invariance of this limit is a well-known consequence of the Ginibre inequalities. For a discussion of this in the context of lattice gauge theories, see~\cite[Section 4]{flv2020}.

Throughout this paper, we let \( \gamma_n \) denote a straight path of length \( n \) with one endpoint at the origin, and let 
\begin{equation*}
	W_{\gamma_n} \coloneqq \prod_{e \in \gamma_n} \rho\bigl( \sigma(e) \bigr)
\end{equation*}
denote the corresponding \emph{Wilson line observable}. The reason that such observables are important in the lattice Higgs model is that they are believed to undergo a phase transition between a region with \emph{pure perimeter law decay} (known as the \emph{Higgs/confinement regime}), meaning that as \( n \to \infty, \)
\begin{equation*}
	\langle W_{\gamma_n} \rangle_{\beta,\kappa} \sim C_{\beta,\kappa} e^{-c_{\beta,\kappa} |\gamma_n|}
\end{equation*} 
for some non-trivial constants \( C_{\beta,\kappa} \) and \( c_{\beta,\kappa}, \) and a region with \emph{perimeter decay with polynomial corrections} (known as the \emph{free phase}), meaning that 
\begin{equation}\label{eq: OZ}
	\langle W_{\gamma_n} \rangle_{\beta,\kappa} \sim \frac{C_{\beta,\kappa} e^{-c_{\beta,\kappa} |\gamma_n|}}{p(|\gamma_n|)}
\end{equation} 
for some non-trivial constants \( C_{\beta,\kappa} \) and \( c_{\beta,\kappa}, \) and a non-constant polynomial \( p_{\beta,\kappa}(|\gamma_n|) \) (see, e.g., \cite{bf1983}). This phase transition is argued to also have a physical interpretation, corresponding to binding versus unbinding of dynamical quarks in the field of a static color source~\cite{bf1983}.
The type of decay described in~\eqref{eq: OZ} is often referred to as Ornstein-Zernike decay (see, e.g.,~\cite{civ2003}) or as exponential decay with polynomial corrections.
One reason to believe that there should be some regime of the lattice Higgs model with such decay is that when \( \beta = \infty, \) then the Wilson line expectation reduces to the spin-spin correlation of the Ising model with coupling parameter \( \kappa, \) and this model is known to undergo such a phase transition. Hence one might expect that at least for \( \beta \) large and \( \kappa \) small, we would have a similar phase.

In~\cite{f2024b}, we showed that if \( \beta \) is sufficiently small or \( \kappa \) is sufficiently large, then the expectation of Wilson line observables indeed has pure perimeter law decay. In this paper, we complement this result by showing that there are polynomial corrections in a certain dilute gas regime if \( \kappa \) is sufficiently small.  

\begin{theorem}\label{theorem: dilute free phase} 
	Assume that \( \kappa >0 \) is sufficiently small,  and let \( (\beta_n)_{n\geq 1} \) be a sequence such that  \( \lim_{n \geq 1} |\gamma_n| e^{-8(m-1)\beta_n} < \infty. \)  
	Then, there is \( C_{\beta_n,\kappa} \), defined in~\eqref{eq: C def} ad with \( 0< \liminf_{n \to \infty} C_{\beta_n,\kappa} \leq \limsup_{n \to \infty} C_{\beta_n,\kappa} < \infty \),  such that, as \( n \to \infty, \) we have
	\begin{equation}\label{eq: main result}
		\langle W_{\gamma_n} \rangle_{\beta_n,\kappa}
		\sim
		\frac{C_{\beta_n,\kappa} e^{-c_{\kappa}|\gamma_n|}}{|\gamma_n|^{\sqrt{m-1}}},
	\end{equation} 
	where \( c_\kappa \) is the same constant as that for the spin-spin correlation of the Ising model for two sites at the end-points of \( \gamma_n. \)
\end{theorem}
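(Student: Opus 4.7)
The plan is to expand around the rigid limit \(\beta = \infty\). At \(\beta = \infty\) the \(\beta\)-term in~\eqref{eq: model def} forces \(d\sigma \equiv 0\), so in the simply connected box \(B_N\) with \(\mathbb{Z}_2\) coefficients every admissible \(\sigma\) is exact, \(\sigma = d\phi\) for some \(\phi\colon C_0(B_N) \to \mathbb{Z}_2\). The Wilson line observable then telescopes to an Ising-type two-point function
\[
    \prod_{e\in\gamma_n}\rho(\sigma(e)) = \prod_{e\in\gamma_n}\rho(d\phi(e)) = \rho(\phi(x_n))\rho(\phi(y_n)),
\]
where \(x_n,y_n\) are the endpoints of \(\gamma_n\), and the residual measure on \(\phi\) is exactly the nearest-neighbour Ising model with coupling \(\kappa\). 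For \(\kappa\) small enough, Campanino--Ioffe--Velenik-type sharp Ornstein--Zernike asymptotics for the Ising two-point function along a coordinate axis already give~\eqref{eq: main result} at \(\beta = \infty\), with the same exponential rate \(c_\kappa\). The task is to extend this to the stated regime of finite but large \(\beta_n\); the scaling \(|\gamma_n|e^{-8(m-1)\beta_n} = O(1)\) is tuned to the cost of the smallest local perturbation that can toggle the Wilson sign, so that the expected number of ``critical'' perturbations along \(\gamma_n\) remains bounded.

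\textbf{Step 1: Polymer/defect representation.} I would partition configurations by their defect set \(D(\sigma) = \{p \in C_2(B_N) : d\sigma(p) = 1\}\). Since \(dd = 0\), \(D\) is a \(\mathbb{Z}_2\) 2-cycle and (the box being simply connected) bounds a 3-chain \(\omega\); the 1-forms with a given defect set \(D\) then form the affine space \(\sigma_\omega + \{d\phi : \phi\colon C_0(B_N)\to\mathbb{Z}_2\}\) for a fixed reference \(\sigma_\omega\) with \(d\sigma_\omega = \mathbf{1}_D\). In these variables the weight factorises into an Ising weight \(e^{\kappa \sum_e \rho(d\phi(e) + \sigma_\omega(e))}\) on \(\phi\), a geometric defect weight \(e^{-2\beta_n|D|}\) relative to the \(\beta = \infty\) reference, and a path--defect sign coupling \(\prod_{e \in \gamma_n}\rho(\sigma_\omega(e))\).

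\textbf{Step 2: Cluster expansion and effective Ising.} Decompose \(D\) into connected components (polymers) and carry out a large-\(\beta_n\) cluster expansion around the Ising measure on \(\phi\). Components disjoint from a thickening of \(\gamma_n\) cancel between numerator and denominator of \(\langle W_{\gamma_n}\rangle\); components interacting with \(\gamma_n\) contribute, after resummation, the prefactor \(C_{\beta_n,\kappa}\) of~\eqref{eq: C def}. Single-edge polymers along \(\gamma_n\) carry weight of order \(e^{-8(m-1)\beta_n}\), controlled by hypothesis, while larger polymers carry additional suppression factors \(e^{-c\beta_n}\). One then obtains
\[
    \langle W_{\gamma_n}\rangle_{\beta_n,\kappa} = C_{\beta_n,\kappa}\,(1+o(1))\,\langle\rho(\phi(x_n))\rho(\phi(y_n))\rangle^{\mathrm{Ising}}_\kappa,
\]
with \(C_{\beta_n,\kappa}\) bounded above and bounded away from zero by the convergent expansion, and the sharp Ising OZ asymptotic applied to the right-hand side then yields~\eqref{eq: main result}.

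\textbf{Main obstacle.} The central difficulty is producing a \emph{sharp} multiplicative prefactor \(C_{\beta_n,\kappa}\) rather than a mere \(1+O(1)\) correction, under the critical scaling \(|\gamma_n|e^{-8(m-1)\beta_n}=O(1)\). A crude union bound over defect polymers only gives a factor of order one and obliterates the polynomial correction one is after. The remedy should be a renewal-type factorisation along \(\gamma_n\): local defect contributions on disjoint edge neighbourhoods are shown to combine additively into \(\log C_{\beta_n,\kappa}\), so that the Ising OZ asymptotic survives the perturbation. Verifying this factorisation uniformly in \(n\), together with the decoupling between the defect cloud and the Ising field \(\phi\) along the sequence \((\beta_n)\) (presumably using FKG-type inequalities as already exploited in~\cite{f2024b}), is the technical heart of the argument.
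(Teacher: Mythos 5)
Your overall strategy---reduce to the Ising two-point function and its Campanino--Ioffe--Velenik asymptotics, and control the vortex gas by a convergent cluster expansion in the dilute regime \( |\gamma_n| e^{-8(m-1)\beta_n} = O(1) \)---is the right one, and your reduction at \( \beta=\infty \) and the identification of the critical scaling are both correct. But the proposal has a genuine gap precisely at the point you flag as the ``main obstacle,'' and the remedy you suggest is not the one that works. In your defect representation, a component \( D \) of the defect set enters the Ising layer by flipping the couplings to \( \kappa_e = \kappa\,\rho(\sigma_\omega(e)) \) on \( \support \sigma_\omega \). Consequently your claim that ``components disjoint from a thickening of \( \gamma_n \) cancel between numerator and denominator'' is unjustified: a defect component perturbs the two-point function \( \langle \rho(\phi(x_n))\rho(\phi(y_n))\rangle \) whenever the Ising random line passes near \( \support\sigma_\omega \), and by OZ theory that line fluctuates in a tube of width \( \sqrt{|\gamma_n|} \) around the segment, not a bounded thickening of \( \gamma_n \). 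Quantifying this defect--line interaction sharply enough to preserve the polynomial correction is the entire difficulty, and a ``renewal-type factorisation along \( \gamma_n \)'' is neither substantiated in your sketch nor what the paper does.

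The paper resolves this by reversing the order of your two expansions: it \emph{first} performs the high-temperature expansion in \( \kappa \), so that the Ising field is replaced by an explicit random path \( \gamma \in \Lambda^{\gamma_n} \) distributed according to the measure \( \vartheta_{\gamma_n,\kappa} \) of~\eqref{eq: vartheta}, and \emph{then} runs a joint cluster expansion in loop and vortex polymers. This makes the vortex--path interaction an explicit functional of \( \gamma \), which is then isolated into the error terms \( E_1,E_2,E_3 \) (non-minimal vortices, overlap of \( \gamma \) with \( \gamma_n \), and clusters touching \( \gamma \)) and shown to be negligible by Markov-inequality arguments under \( \vartheta_{\gamma_n,\kappa} \) (Lemmas~\ref{lemma: line intersection 2}, \ref{lemma: first upper bound for term 2}, and~\ref{lemma: upper bound on intersection}), which themselves lean on the Ising OZ asymptotics~\eqref{eq: ising decay measure}. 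Two further ingredients with no counterpart in your sketch are essential: the Ursell-function identity of Lemma~\ref{lemma: minimal ursell}, which factors the minimal vortices out of the joint expansion and produces the explicit terms \( -2\xi_\beta|\gamma_n| - 2\xi_\beta|\gamma| \) and the \( \deg_{\mathcal{C}}e \) corrections in Lemma~\ref{lemma: main contribution and errors new}; and the lower bound of Lemma~\ref{lemma: lower bound}, needed to show \( \liminf_n C_{\beta_n,\kappa} > 0 \), i.e., that the surviving \( \gamma \)-dependent factor \( e^{-2\xi_{\beta_n}|\gamma|} \) does not degrade the asymptotics. Without some substitute for these steps, your outline does not yield~\eqref{eq: main result}.
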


We now describe the main ideas of the proof. First, we do a high-temperature expansion in \( \kappa. \) The resulting quantity can be thought of as a weighted sum of Wilson loop expectations of random loops. We then do a cluster expansion for each of these random loops and show that the weight associated with loops that are much longer than \( |\gamma_n| \) is very small. This allows us to tune the parameter \( \beta_n \) so that we can approximate these Wilson loop expectations using only minimal vortices. After bounding the error terms, we obtain~\eqref{eq: main result}.

\subsection{Related papers}

Several papers from the last few years have considered dilute gas estimates in lattice gauge theories, in the sense that they have given estimates for Wilson loop or Wilson line expectations under the assumption that \( |\gamma_n| e^{-8(m-1) \beta_n} \asymp 1 \) \cite{c2019, a2021, sc2019, f2022b, flv2022, flv2020}. Closest to this paper is~\cite{a2021}, which considered Wilson loop expectations in the same setting as Theorem~\ref{theorem: dilute free phase} (also for more general structure groups). However, in contrast to Theorem~\ref{theorem: dilute free phase}, in this case, the Wilson loop expectation is shown to be of constant order, while Wilson line observables, by Theorem~\ref{theorem: dilute free phase}, have exponential decay.

In~\cite{f2024}, we studied a related observable known as the Marcu-Fredenhagen ratio and showed that this observable undergoes a phase transition. This paper also treated the free phase and did not require \( \beta_n \) to grow with the length of the loop. This result was proved using similar tools in that the proof also started by first taking a high-temperature expansion and then using a cluster expansion. However, the ideas used in~\cite{f2024} were too rough to be able to get the finer asymptotics needed to conclude a polynomial correction term to the exponential decay of Wilson line observables. Hence, one of the main contributions of this paper is the more detailed analysis of the expression resulting from the relevant cluster expansion.

If~\cite{f2022b,flv2022,f2024b}, we considered Wilson line observables in a dilute gas limit in the Higgs and confinement regimes. However, these regimes are far from the regime considered in this paper, and the ideas there do not extend to the current setting.
In particular, in~\cite{f2024b}, we showed that Wilson line observables have a pure perimeter law in the Higgs/confinement regime. Here, the observable has a different type of decay, and the methods used there do not extend to the Higgs phase of the model as the models considered there do not have finite clusters in the free phase.

\subsection{Structure of paper}

In Section~\ref{section: notation}, we review the notation we will need from discrete exterior calculus. Next, in Section~\ref{section: high temperature}, we describe the high-temperature expansion of~\eqref{eq: model def}, which is useful in the free phase, and we then in Section~\ref{section: cluster expansion} recall how cluster expansion can be used to analyze the corresponding model.  In Section~\ref{sec: bounds}, we state and prove a number of upper bounds for the cluster expansions that we will need for the proof of the main result, which is finally proven in Section~\ref{section: proof of main result}.

\subsection{Funding}

M.P.F. was funded by the Swedish Research Council, grant number 2024-04744.

\section{Background}\label{section: background}

In this section, we introduce the notation we will use throughout the paper, describe the high-temperature expansion and cluster expansion we will use, and finally, review the Ornstein-Zernike decay for the Ising model.

\subsection{Notation}\label{section: notation}

In this section, we introduce the notation we will use throughout the paper.

First, we will until the end of the proof work with the measure \( \mu_{N,\beta,\kappa} \) on the finite lattice \( B_N = [-N,N]^m \cap \mathbb{Z}^m. \) For this reason, all the notation which will be introduced in this section will depend implicitly on \( N, \) even though we usually suppress this in the notation. 

For functions \( f \) and \( g, \) will write \( f(n,N) \sim g(n,N) \) and \( f(n,N) \lesssim g(n,N) \)   to denote that \( \lim_{n \to \infty}\lim_{N \to \infty} \frac{f(n,N)}{g(n,N)} = 1 \) and that \( \lim_{n \to \infty}\lim_{N \to \infty} \frac{f(n,N)}{g(n,N)} \leq 1 \) respectively.

\subsubsection{Discrete exterior calculus}
We will use the language of discrete exterior calculus. For a thorough background to discrete exterior calculus in the setting of lattice gauge theories, we refer the reader to~\cite{flv2020}. Below, we list the notation from the discrete exterior calculus we will use in this paper.
\begin{itemize}
	\item We let \( B_N \) denote a box of sidelength \( 2N \) in \( \mathbb{Z}^m \) centered at the origin.
	
	\item For \(k =0,1,\dots, m, \) we let  \( C_k(B_N) \) denote set of oriented \( k \)-cells of \( B_N. \) 
	
	\item For \(k =0,1,\dots, m, \) we let  \( C_k(B_N,\mathbb{Z}) \) denote set of all \( \mathbb{Z} \)-valued \( k \)-chains on \( C_k (B_N). \) 
	
	\item When \( k = 1,2,\dots, m\) and  \( c \in C_k(B_N ), \) we let \( \partial c \) be the \( (k-1) \)-chain corresponding to the oriented boundary of \( c. \) When \( k = 0,1,\dots, m-1 \) and  \( c \in C_k(B_N ), \) we let \( \hat \partial c \) be the \( (k-1) \)-chain corresponding to the oriented co-boundary of \( c, \) and note that for \( c' \in C_{k+1}(B_N), \) we have \( c' \in \support \hat \partial c \Leftrightarrow c \in \support \partial c'. \)
	
	\item For \( k =1,2,\dots, m \) and  \( \mathfrak{c} \in C^k(B_N,\mathbb{Z}), \) we let \( \partial \mathfrak{c} \in C^{k-1}(B_N,\mathbb{Z})\) be defined by
	\[
	\partial \mathfrak{c}[c] \coloneqq \sum_{c' \in \partial c} \mathfrak{c}[c']
	,\quad c \in C_{k-1}(B_N).
	\] 
	 
	\item For \(k =0,1,\dots, m, \) we let \( \Omega_k(B_N,G) \) denote the set of all \( G \)-valued \( k \)-forms on \( C_k(B_N). \) When \( \mathfrak{c} \in C_k(B_N)\) is a \( k \)-chain and \( \omega \in \Omega(B_N, G), \) we write \[ \omega(\mathfrak{c}) \coloneqq \sum_{e \in C_k(B_N)^+} \mathfrak{c}[c]\omega(c). \]
	
	\item When \( k = 0,1,\dots, m-1 ,\) we let \( d \) denote the discrete differential operator which maps  \( \omega \in \Omega_k(B_N,G) \) to \( d\omega \coloneqq  \Omega_{k+1}(B_N,G) \) defined by
	\[
	d\omega(c) \coloneqq \omega(\partial c) = \sum_{c' \in \partial c} \omega(c),\quad c \in C_{k+1}(B_N). 
	\]
	
\end{itemize}

\subsubsection{Paths}
We let \( \mathcal{G}_1 \) denote the graph with vertex set \( C_1(B_N)^+ \) and an edge between two vertices \( e_1,e_2 \in C_1(B_N)^+ \) if \( \support \hat \partial e_1 \cap \support \hat \partial e_2 \neq \emptyset. \)

We say that a 1-chain \( \gamma \in C^1(B_N,\mathbb{Z}) \)  is a path if  \( \gamma(e) \in \{ -1,1, \} \) for all \( e \in \support \gamma. \)
We let \( \Lambda \) denote the set of all paths and let \( \Lambda_1 \) denote the set of all connected paths, i.e., the set of all paths \( \gamma \in \Lambda \) whose support is a connected subset of \( \mathcal{G}_1. \)
We say that a path \( \gamma \in \Lambda \) is closed if \( \partial \gamma = 0. \)

Let \( \gamma \in \Lambda_1 \) be a path. If \( \gamma \) is closed, we let \( \Lambda^\gamma \coloneqq \{ 0 \}, \) and if \( \gamma \) is not closed, then we let \( \Lambda^\gamma \) be the set of all connected paths \( \gamma_0 \) such that \( \gamma + \gamma_0 \) is closed, i.e.,
\begin{equation*}
	\Lambda^\gamma \coloneqq \bigl\{ \gamma_0 \in \Lambda_1 \colon \partial (\gamma+\gamma_0 ) = 0 \bigr\}.
\end{equation*}

When \( e \in C_1(B_N)^+ \) and \( \gamma \in \Lambda, \) we write \( e \in \gamma \) if and only if \( \gamma[e]=1, \) and when \( e \in C_1(B_N)^-, \) we write  \( e \in \gamma \) if and only if \( \gamma[-e]=-1. \)

When \( \gamma,\gamma' \in \Lambda, \) we write \( \gamma \sim \gamma' \) if there is \( e \in \gamma \) and \( e' \in \gamma' \) such that \( {\support \partial e \cap \support \hat \partial e' \neq \emptyset.} \) In other words, we write \( \gamma \sim \gamma' \) if \( \gamma \) and \( \gamma' \) both pass through some common vertex.

\subsubsection{Vortices}

We let \( \mathcal{G}_2 \) denote the graph with vertex set \( C_2(B_N)^+ \) and an edge between two vertices \( p_1,p_2 \in C_2(B_N)^+ \) if \( \support \hat \partial p_1 \cap \support \hat \partial p_2 \neq \emptyset. \) A 2-form \( \omega \in \Omega^2(B_N,\mathbb{Z}_2) \) is referred to as a \emph{vortex} if the set \( (\support \omega)^+ \) induces a connected subgraph of \( \mathcal{G}_2. \) We let \( \Lambda_2 \) denote the set of all vortices.

When \( \omega,\omega' \in \Omega^2(B_N,\mathbb{Z}_2), \) we write \( \omega \sim \gamma' \) if there is \( p \in \support \omega \) and \( p' \in \support \omega' \) such that \( \support \hat \partial p \cap \support \hat \partial p' \neq \emptyset, \) In other words, we write \( \omega \sim \omega' \) if \( \omega \) and \( \omega' \) both have support in the boundary of some common 3-cell.

One verifies that any \( \omega \in \Lambda_2 \) satisfies \( |\support \omega| \geq 2(m-1)\) (see, e.g., \cite[Figure 1]{f2022}). Any \( \omega \in \Lambda_2 \) which satisfies \( |\support \omega| = 2(m-1)\) is referred to as a minimal vortex, and can be written as \( d\sigma \) for some \( \sigma \in \Omega^1(B_N,\mathbb{Z}_2) \) with support on exactly one pair of edges \( \{ e,-e \} \subseteq  C_1(B_N). \)

\subsubsection{The abelian lattice Higgs model}

To simplify notation in the rest of the paper, for a path \( \gamma \in \Lambda_1, \) we define

\begin{equation}\label{eq: Z def}
	Z_{N,\beta,\kappa}[\gamma] \coloneqq \sum_{\sigma \in \Omega^1(B_N,\mathbb{Z}_2)} \rho \bigl(\sigma(\gamma)\bigr) e^{\beta \sum_{p \in C_2(B_N)} \rho(d\sigma(p)) + \kappa \sum_{e \in C_1(B_N)} \rho(\sigma(e))}.
\end{equation}
and note that if \( \gamma = 0, \) then \( Z_{N,\beta,\kappa}[0] = Z_{N,\beta,\kappa}. \)

\subsection{The high temperature expansion}\label{section: high temperature}

In this section, we describe the model obtained from applying a high-temperature expansion to~\eqref{eq: model def}. We refer the reader to~\cite[Lemma~5.1]{f2024} for a proof of this expansion.  

For a path \( \gamma \in \Lambda_1, \) let
\begin{equation}\label{eq: check Z}
	\check Z_{N,\beta,\kappa}[\gamma ] 
	\coloneqq   
	\sum_{\gamma_0 \in \Lambda^\gamma}  (\tanh 2\kappa)^{| \gamma_0|} 
	\check Z_{N,\beta,\kappa}[\gamma,\gamma_0 ],
\end{equation} 
where, for \( \gamma_0 \in \Lambda^\gamma, \) we let
\begin{align*}
	\check Z_{N,\beta,\kappa}[\gamma,\gamma_0 ] \coloneqq \!\!\!\!
	\sum_{\substack{\gamma' \in \Lambda \colon\\    \delta  \gamma'=0,\, \gamma' \nsim \gamma}} \!\!\!\!
	(\tanh 2\kappa)^{| \gamma'|} \!\!\!\!
	\sum_{\substack{\omega \in \Omega^2(B_N,\mathbb{Z}_2) \colon\\ d\omega=0}}   \!\!\!\! e^{\beta \sum_{p \in C_2(B_N)} (\rho(\omega(p))-1)  }    \rho(\omega(q_{ \gamma+\gamma_0}))\rho(\omega(q_{ \gamma'})) .
\end{align*}
Note that if \( \gamma \) is closed, then
\begin{equation*}
	\check Z_{N,\beta,\kappa}[\gamma] = \check Z_{N,\beta,\kappa}[\gamma,0].
\end{equation*}
By~\cite[Lemma~5.1]{f2024}, we have 
\[
    Z_{N,\beta,\kappa}[\gamma] = \frac{| \Omega^1(B_N,\mathbb{Z}_2) | (\cosh 2\kappa)^{|C_1(B_N)^+|} e^{\beta |C_2(B_N)|} }{|\{ \omega \in \Omega^2(B_N,\mathbb{Z}_2) \colon d\omega=0 \}| }   \check Z_{N,\beta,\kappa}[\gamma].
\] 
Since the fraction in the previous equation does not depend on \( \gamma, \), this gives a relationship between the model described by \( Z_{N,\beta,\kappa}[\gamma] \) and the model described by \( \check Z_{N,\beta,\kappa}[\gamma]\), known as the high-temperature expansion.
If we set \( \beta=\infty, \)  we obtain the Ising model, and in this case \( Z_{N,\infty,\kappa}[\gamma]/Z_{N,\infty,\kappa}[0] \) is exactly the spin-spin correlation between the endpoints of \( \gamma \) in an Ising model with coupling parameter \( \kappa. \) Moreover, in this case, \eqref{eq: check Z} simplifies to 
\begin{equation}\label{eq: ising expansion}
\begin{split}
	& \mathbb{E}\bigl[  \rho \bigl(\sigma( \gamma) \bigr)\bigr]_{N,\infty,\kappa}
	=
	\frac{Z_{N,\infty,\kappa}[\gamma] }{Z_{N,\infty,\kappa}[0] }
	=
	\sum_{\gamma_0 \in \Lambda^\gamma}  (\tanh 2\kappa)^{| \gamma_0|} 
	\frac{\check Z_{N,\infty,\kappa}[\gamma,\gamma_0 ]}{\check Z_{N,\infty,\kappa}[0,0 ]}
	\\&\qquad=
	\sum_{\gamma_0 \in \Lambda^\gamma}  (\tanh 2\kappa)^{| \gamma_0|} 
	\frac{
		\sum_{\substack{\gamma' \in \Lambda \colon\\    \delta  \gamma'=0}} 
		(\tanh 2\kappa)^{| \gamma'|} \mathbf{1}(\gamma' \nsim \gamma_0)
	}{
		\sum_{\substack{\gamma' \in \Lambda \colon\\    \delta  \gamma'=0}} 
		   (\tanh 2\kappa)^{| \gamma'|} }.
\end{split}	
\end{equation}
This is exactly the same high-temperature expansion that was used in, e.g.~\cite{civ2003}, to obtain Ornstein-Zernike decay for the spin-spin correlation function in the Ising model.
We can think of the model described in~\eqref{eq: ising expansion} as first picking a random path between the endpoints of \( \gamma, \) and then considering the probability that in a certain random loop model, no loop touches the random path.
We can think of the model in~\eqref{eq: check Z} adding an additional weight \( \langle W_{\gamma+\gamma_0 + \gamma' } \rangle_{N,\beta,0}\) to each pair \( ( \gamma_0, \gamma') \) in~\eqref{eq: ising expansion}. The main difficulty posed by this addition, even in a dilute gas limit, is that the size of the support of \( \gamma +\gamma_0 + \gamma'\) is almost surely  proportional to \( |C_1(B_N)| \) and hence if we do not scale \( \beta \) with \( N, \) vortices of all sizes will affect this observable.

\subsection{The cluster expansion}\label{section: cluster expansion}

In this section, we recall the cluster expansion of \( \log \bigl( \check Z_{N,\beta,\kappa}[\gamma,\gamma_0 ] / \check Z_{N,\beta,\kappa}[0] \bigr) \) from~\cite[Section 5]{f2024}. 
    To this end, recall the definitions of \( \Lambda_1, \) \( \Lambda^\gamma, \) and \( \Lambda_2 \) from the previous sections. 
    For each \( \gamma \in \Lambda_1 \), we associate a closed surface \( q_\gamma \) whose support is completely contained in \( B_N. \)
    For \( \gamma \in \Lambda_1, \) \( \gamma_0 \in \Lambda^\gamma, \) \( \gamma',\gamma'' \in \Lambda_1, \) and \( \omega,\omega' \in \Lambda_2, \) we define an interaction function \( \iota \) by
    \begin{equation*}
		\iota(\omega,\gamma') \coloneqq
 		\rho(\omega(q_{\gamma'}))   
	\end{equation*}
	\begin{equation*}
		\iota(\omega,\omega') \coloneqq \mathbf{1}(\omega \nsim \omega') 
	\end{equation*}
	and
	\begin{equation*}
		\iota(\gamma',\gamma'') \coloneqq 
	\mathbf{1}( \gamma' \nsim \gamma'')
	\end{equation*}
	and let \( \zeta \coloneqq 1-\iota. \) The action \( \phi_{\beta,\kappa} \) is defined for \( \gamma \in \Lambda_1 \) by 
	\[
	\phi_{\beta,\kappa}(\gamma) \coloneqq (\tanh 2\kappa)^{|\gamma|},
	\]
	and for \( \omega \in \Lambda_2 \) by
	\[
	\phi_{\beta,\kappa}  (\omega) \coloneqq  e^{-2\beta |\support \omega|}.
	\] 

Let \( \mathcal{G}_\Lambda \) be the graph with vertex set \( \Lambda_1\cup \Lambda_2 \) and and edge between \( \eta_1,\eta_2 \in \Lambda_1 \cup \Lambda_2 \) if \( \zeta(\eta_1,\eta_2) \neq 0. \)

Multisets of elements of \( \Lambda_1 \cup \Lambda_2 \) corresponding to connected subgraphs of \( \mathcal{G}_\Lambda \) are referred to as clusters, and the set of all such clusters is denoted by \( \Xi. \) For a cluster \( \mathcal{C} \in \Xi \) and \( \eta \in \Lambda_1 \cup \Lambda_2, \) we let \( n_{\mathcal{C}}(\eta) \) denote the multiplicity of \( \eta   \) in \( \mathcal{C}. \)

Given a cluster \( \mathcal{C} \in \Xi, \) we let \( \mathcal{C}^1 \) be the multiset \( \{ \eta \in \mathcal{C} \colon \eta \in \Lambda_1 \},\) and let \( \mathcal{C}^2 \coloneqq \mathcal{C}\smallsetminus \mathcal{C}^1. \)
We let \( \Xi^1 \coloneqq \{ \mathcal{C} \in \Xi \colon \mathcal{C}= \mathcal{C}^1\}\) and \( \Xi^1_{e} \coloneqq \{ \mathcal{C} \in \Xi^1 \colon e \in \support \mathcal{C} \}. \)
Further, we let 
\begin{equation*}
\| \mathcal{C}\|_1 \coloneqq \sum_{\gamma \in \mathcal{C}^1} n_{\mathcal{C}} (\gamma) |\support \gamma |
\end{equation*}
and
\begin{equation*}
	\|\mathcal{C}\|_2 \coloneqq \sum_{\omega\in\mathcal{C}_2} n_{\mathcal{C}}(\omega)|(\support \omega)^+|.
\end{equation*}

In the cluster expansion, a family of special functions, known as Ursell functions, appear, and hence, we now define them in the context that is relevant for us. 
    To this end, let \( {k \geq 1,}\) and let \( \mathcal{G}^k\) be the set of all connected graphs \( G\) with vertex set \( V(G) = \{ 1,2,\dots, k\}.\) Let \( E(G)\) be the (undirected) edge set of \( G.\)
    For any polymers \( {\eta_1,\eta_2,\dots , \eta_k \in \Lambda} ,\) we let
    \begin{equation*}
        U(  \eta_1, \ldots, \eta_k  ) \coloneqq \frac{1}{k!} \sum_{G \in \mathcal{G}^k} (-1)^{|E(G )|} \prod_{(i,j) \in E({G})} \zeta( \eta_i , \eta_j).
    \end{equation*} 
    Note that this definition is invariant under permutations of the polymers \( \eta_1,\eta_2,\dots,\eta_k.\)
   For \( \mathcal{C} \in \Xi,\) and any enumeration \( \eta_1,\dots, \eta_k \) (with multiplicities) of the polymers in \( \mathcal{C},\) we define
   \begin{equation}\label{eq: ursell functions} 
        U(\mathcal{S}) = k! \, U(\eta_1,\dots, \eta_1).
   \end{equation}
Note that for any \( \mathcal{C} = \{ \eta \} \in \Xi	\)  we have \( U(\mathcal{C})=1,\) and for any \( \mathcal{C} = \{ \eta_1,\eta_2 \} \in \Xi,\) we have \( U(\mathcal{C}) = -1. \)

	Next, for \( \mathcal{C} \in \Xi, \) let \( \Psi_{\beta,\kappa}(\mathcal{C}) \coloneqq U(\mathcal{C})  \phi_{\beta,\kappa}(\mathcal{C}) .\)	
	Then, using the notation of~\cite[Section 5]{f2024}, for all \( \beta>\beta^{\text{(free)}}(\alpha) \) and \( \kappa<\kappa^{\text{(free)}}(\alpha), \) with \( \beta^{\text{(free)}} \),  \( \kappa^{\text{(free)}} \), and \( \alpha \) defined below, we can write 
	\begin{align*}
	 	\log \bigl( \check Z_{N,\beta,\kappa}[\gamma,\gamma_0 ] / \check Z_{N,\beta,\kappa}[0] \bigr)
	 	=
	 	\sum_{\mathcal{C} \in \Xi} \Psi_{\beta,\kappa}(\mathcal{C}) \pigl( \rho\bigl(\mathcal{C}^2(q_{\gamma+\gamma_0})\bigr) \mathbf{1}(\mathcal{C}^1 \nsim \gamma_0)-1\pigr),
	\end{align*}
	and hence
	\begin{equation}\label{eq: the expansion}
    	\frac{Z_{N,\beta,\kappa}[\gamma]}{Z_{N,\beta,\kappa}[0]} = 
    	\sum_{\gamma_0 \in \Lambda^\gamma}  (\tanh 2\kappa)^{| \gamma_0|} 
    	e^{	 	\sum_{\mathcal{C} \in \Xi} \Psi_{\beta,\kappa}(\mathcal{C}) \bigl( \rho(\mathcal{C}^2(q_{\gamma+\gamma_0})) \mathbf{1}(\mathcal{C}^1 \nsim \gamma_0)-1\bigr)}.
	\end{equation}
	It is not at all obvious that the sum on the right-hand side of~\eqref{eq: the expansion} converges, but this is guaranteed by the following result when \( \beta \) is sufficiently large and \( \kappa \) is sufficiently small.
	\begin{proposition}[Proposition 5.8 in \cite{f2024}]\label{proposition: cluster convergence 3}
	For each \( \alpha \in (0,1), \) there are \( \beta_0^{(\textrm{free})}(\alpha)>0 \) and  \( \kappa_0^{(\textrm{free})} (\alpha)>0\)  such that the following holds.
	
	\begin{enumerate}
		\item For all \( \alpha\in (0,1) ,\) \( \beta>   \beta_0^{(\textrm{free})}(\alpha) , \)  \( \kappa < \kappa_0^{(\textrm{free})} (\alpha), \)  \( \gamma \in \Lambda_1\),  \( \gamma_0 \in \Lambda_0, \) and \( \eta \in \Lambda,\) we have 
    \begin{equation*}
        \sum_{\mathcal{C} \in \Xi \colon \eta \in \mathcal{C}} \bigl|\Psi _{\beta,\kappa} (\mathcal{C})  \rho\bigl(\mathcal{C}^2(q_{\gamma+\gamma_0})\bigr) \mathbf{1}(\mathcal{C}^1 \nsim \gamma_0)\bigr| \leq \bigl| \phi _{\beta,\kappa}(\eta)\bigr|^{1-\alpha }  .
    \end{equation*}  
    \item For all \( \alpha \in (0,1) \), \( \beta>  \beta_0^{(\textrm{free})}(\alpha), \) \( \kappa <  \kappa_0^{(\textrm{free})}(\alpha)  \), \( \gamma \in \Lambda^\gamma,\) and  \( \gamma_0 \in \Lambda_0\) we have
	\begin{equation}\label{eq: cluster expansion 3}
		\log \check Z[\gamma,\gamma_0]
		=
		\sum_{\mathcal{C}\in \Xi} \Psi _{\beta,\kappa} (\mathcal{C})  \rho\bigl(\mathcal{C}^2(q_{\gamma+\gamma_0})\bigr) \mathbf{1}(\mathcal{C}^1 \nsim \gamma_0).
	\end{equation} 
    Furthermore, the series on the right-hand side of~\eqref{eq: cluster expansion 3} is absolutely convergent.
	\end{enumerate}  
\end{proposition}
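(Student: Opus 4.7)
The plan is to derive both claims from a standard Koteck\'y--Preiss convergence criterion applied to the two-species polymer system whose polymers are the elements of \( \Lambda_1 \cup \Lambda_2 \), whose weights are \( \phi_{\beta,\kappa} \), and whose incompatibility relation \( \sim \) is the edge relation of \( \mathcal{G}_\Lambda \). Since the factor \( \rho(\mathcal{C}^2(q_{\gamma+\gamma_0})) \mathbf{1}(\mathcal{C}^1 \nsim \gamma_0) \) in statement (1) is bounded in modulus by \( 1 \), that statement reduces uniformly in \( \gamma \) and \( \gamma_0 \) to the pointwise Ursell bound
\begin{equation*}
    \sum_{\mathcal{C} \in \Xi \colon \eta \in \mathcal{C}} |\Psi_{\beta,\kappa}(\mathcal{C})| \leq |\phi_{\beta,\kappa}(\eta)|^{1-\alpha},
\end{equation*}
which is precisely the form of the Koteck\'y--Preiss pointwise bound once the auxiliary weight is chosen as \( a(\eta) \coloneqq -\alpha \log |\phi_{\beta,\kappa}(\eta)| \), so that \( |\phi_{\beta,\kappa}(\eta)| e^{a(\eta)} = |\phi_{\beta,\kappa}(\eta)|^{1-\alpha} \). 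Statement (2) then follows from the standard cluster-expansion theorem, which identifies \( \log \check Z[\gamma,\gamma_0]/\check Z[0] \) with the displayed sum over clusters once absolute convergence has been established.

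To verify the criterion, it suffices to show that for every \( \eta_0 \in \Lambda_1 \cup \Lambda_2 \),
\begin{equation*}
    \sum_{\eta \colon \eta \sim \eta_0} |\phi_{\beta,\kappa}(\eta)|^{1-\alpha} \leq -\alpha \log |\phi_{\beta,\kappa}(\eta_0)|.
\end{equation*}
The right-hand side equals \( 2\alpha\beta|\support \eta_0| \) for \( \eta_0 \in \Lambda_2 \) and \( \alpha |\eta_0| \log \coth(2\kappa) \) for \( \eta_0 \in \Lambda_1 \), and is thus linear in the size of \( \eta_0 \) with a coefficient that can be made arbitrarily large by taking \( \beta \) large, respectively \( \kappa \) small. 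On the left, standard entropy bounds for connected subgraphs of \( \mathcal{G}_1 \) and \( \mathcal{G}_2 \) show that the number of connected paths of length \( n \) through a fixed edge is at most \( C_m^n \) and the number of vortices of support size \( k \) through a fixed plaquette is at most \( D_m^k \); consequently
\begin{equation*}
    \sum_{\gamma \in \Lambda_1 \colon e \in \support \gamma} (\tanh 2\kappa)^{(1-\alpha) |\gamma|} \leq \sum_{n \geq 1} C_m^n (\tanh 2\kappa)^{(1-\alpha) n},
\end{equation*}
\begin{equation*}
    \sum_{\omega \in \Lambda_2 \colon p \in \support \omega} e^{-2\beta(1-\alpha) |\support \omega|} \leq \sum_{k \geq 2(m-1)} D_m^k e^{-2\beta(1-\alpha) k},
\end{equation*}
and both right-hand sides can be made arbitrarily small by taking \( \kappa \) sufficiently small and \( \beta \) sufficiently large.

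To pass from these per-vertex estimates to the required per-polymer estimate, observe that \( \eta \sim \eta_0 \) forces some vertex of the interaction graph to lie in both the (co-)boundary of \( \support \eta \) and that of \( \support \eta_0 \); there are at most \( O(|\support \eta_0|) \) such vertices, so summing the per-vertex estimates over them gives a bound that is linear in \( |\support \eta_0| \) with arbitrarily small proportionality constant. Comparing with the right-hand side above yields the criterion upon choosing \( \beta_0^{(\textrm{free})}(\alpha) \) sufficiently large and \( \kappa_0^{(\textrm{free})}(\alpha) \) sufficiently small. The main obstacle is the bookkeeping needed to treat the two species uniformly: the left-hand sum splits into path--path, path--vortex, vortex--path, and vortex--vortex contributions, each requiring a separate entropy estimate, and one must verify that a single pair \( (\beta_0^{(\textrm{free})}(\alpha), \kappa_0^{(\textrm{free})}(\alpha)) \) balances all four. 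Since \( \phi_{\beta,\kappa} \) factors across species (it depends only on \( \beta \) on \( \Lambda_2 \) and only on \( \kappa \) on \( \Lambda_1 \)) and the two parameters can be tuned independently, this bookkeeping is tedious but poses no essential difficulty.
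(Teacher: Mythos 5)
First, a remark on the comparison itself: the paper does not prove this proposition — it is imported verbatim as Proposition 5.8 of \cite{f2024} — so there is no internal proof to measure your attempt against, and your Kotecký--Preiss strategy is certainly the natural route (and, in outline, the one used in that reference). There is, however, one genuine gap in your verification of the criterion, located exactly where you claim that ``\( \eta \sim \eta_0 \) forces some vertex of the interaction graph to lie in both the (co-)boundary of \( \support \eta \) and that of \( \support \eta_0 \).'' This is true for the path--path and vortex--vortex interactions, which are defined by support adjacency, but it is false for the mixed interaction: \( \zeta(\omega,\gamma') = 1 - \rho(\omega(q_{\gamma'})) \) is nonzero exactly when \( \omega \) has odd overlap with the \emph{spanning surface} \( q_{\gamma'} \), not when \( \support\omega \) meets \( \support\gamma' \). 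Since \( |\support q_{\gamma'}| \) is in general of order \( |\gamma'|^2 \) (a planar loop of length \( \ell \) bounds no surface of area \( o(\ell^2) \)), summing your per-plaquette vortex estimate over the cells of \( q_{\gamma'} \) yields a bound quadratic in \( |\gamma'| \), which cannot be dominated by the weight \( a(\gamma') = \alpha |\gamma'| \log\coth(2\kappa) \) uniformly in \( \gamma' \) for any fixed \( \beta \). As written, this step of the verification fails.

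The repair is a localization lemma that your sketch does not supply: because the vortex polymers are closed (\( d\omega = 0 \)) with connected support, each \( \omega \) can be written as \( d\nu \) for a \( 1 \)-form \( \nu \) supported in an \( O(|\support\omega|) \)-neighbourhood of \( \support\omega \), whence \( \omega(q_{\gamma'}) = \nu(\partial q_{\gamma'}) = \nu(\gamma') \) vanishes unless \( \gamma' \) enters that neighbourhood. This shows that a vortex of support size \( k \) can interact with \( \gamma' \) only from within distance \( O(k) \) of \( \support\gamma' \), restoring a bound of the form \( \sum_{k \geq 2(m-1)} |\gamma'|\,(Ck)^m D_m^k e^{-2\beta(1-\alpha)k} \), which is linear in \( |\gamma'| \) with a constant that is arbitrarily small for \( \beta \) large. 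With that lemma in hand, the rest of your argument goes through, modulo the cosmetic point that \( \zeta \) takes the value \( 2 \) rather than \( 1 \) on mixed pairs, so the general-interaction form of the convergence criterion with \( |\zeta| \)-weights must be invoked, at the harmless cost of a factor \( 2 \).
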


If \( \mathcal{C} \in \Xi^1, \) then \( \Psi_{\beta,\kappa}(\mathcal{C}) \) is independent of \( \beta, \) and we therefor write \( \Psi_\kappa(\mathcal{C}) \coloneqq \Psi_{\beta,\kappa}(\mathcal{C}) \) in this case.
Further, we note that all results in this section are valid also for \( \beta = \infty\), and we write \( \Psi_{\infty,\kappa}(\mathcal{C})\coloneqq \lim_{\beta\to \infty} \Psi_{\beta,\kappa}(\mathcal{C}) = \Psi_\kappa(\mathcal{C}^1) \mathbf{1}(\mathcal{C}\in \Xi^1). \)

\subsection{Ornstein-Zernike decay for the Ising model}\label{section: ising decay}

When \( \beta = \infty\) in~\eqref{eq: Z def} (and hence also~\eqref{eq: check Z} and~\eqref{eq: the expansion}), we recover the Ising model. In the high-temperature regime of the Ising model, spin-spin correlation functions are well known to have Ornstein-Zernike decay (see, e.g., \cite{civ2003}). In  detail, by~\cite[Theorem 1.1]{civ2003}, for any \( \gamma \) with \( \partial \gamma = x-y \) and any \( \kappa<\kappa_c \) (here \( \kappa_c \) is the critical value for the Ising model on \( \mathbb{Z}^m\)), we have 
\begin{equation}\label{eq: ising decay}\begin{split} 
	&\langle \rho(\eta_x)\rho(\eta_y) \rangle_\kappa = \lim_{N \to \infty}\frac{Z_{N,\beta,\kappa}[\gamma]}{Z_{N,\beta,\kappa}[0]}  \sim 
	\frac{C_\kappa  e^{-c_\kappa  |\gamma|}}{|\gamma|^{\sqrt{m-1}}}.
\end{split}\end{equation}
as \( \dist(x,y) \to \infty\) for some constants  \( C_\kappa \) and \( c_\kappa \) that depend on \(  \gamma \) only through the direction of the line between the end-points of \( \gamma. \)
Using~\eqref{eq: the expansion}, it thus follows that 
\begin{equation}\label{eq: ising decay ce}\begin{split} 
	& \lim_{N \to \infty}
    	\sum_{\gamma_0 \in \Lambda^\gamma}  (\tanh 2\kappa)^{| \gamma_0|} 
    	e^{	 	\sum_{\mathcal{C} \in \Xi^1} \Psi_\kappa(\mathcal{C}) (  \mathbf{1}(\mathcal{C}^1 \nsim \gamma_0)-1)} 
	\sim 
	\frac{C_\kappa  e^{-c_\kappa  |\gamma|}}{|\gamma|^{\sqrt{m-1}}}.
\end{split}\end{equation}

To simplify the notation in what follows, for each \( \kappa < \kappa_0^{\text{(free)}}(\alpha), \) we let \( \vartheta_{\gamma_n,\kappa} \) be the measure on \( \Lambda^{\gamma_n} \) defined by
\begin{equation}\label{eq: vartheta}
	\vartheta_{\gamma_n,\kappa}(\gamma) \coloneqq \vartheta_{N,\gamma_n,\kappa}(\gamma) \coloneqq  (\tanh 2\kappa)^{| \gamma|}  e^{ 
	 	\sum_{\mathcal{C} \in \Xi^1  } \Psi_{\infty, \kappa}(\mathcal{C}) ( \mathbf{1}(\mathcal{C} \nsim \gamma)-1)}, \quad \gamma \in \Lambda^{\gamma_n}.
\end{equation}
We note that by~\eqref{eq: the expansion}, we have
\begin{equation*}
	\vartheta_{\gamma_n,\kappa}(\Lambda^{\gamma_n}) = \sum_{\gamma \in \Lambda^{\gamma_n}} \vartheta_{\gamma_n,\infty,\kappa}(\gamma) 
	= 
	\lim_{N \to \infty}\mathbb{E}\bigl[ \rho \bigl(\sigma(\gamma_n) \bigr) \bigr]_{N,\infty,\kappa} \leq 1,
\end{equation*}
and hence both \( \vartheta_{\gamma_n,\kappa} \) is a finite measure under the assumptions of Proposition~\ref{proposition: cluster convergence 3}. Moreover, by~\eqref{eq: ising decay ce}, we have 
\begin{equation}\label{eq: ising decay measure}\begin{split} 
	& \lim_{N \to \infty}
    	\vartheta_{\gamma_n,\kappa} (\Lambda^{\gamma_n})
	\sim 
	\frac{C_\kappa  e^{-c_\kappa  |\gamma|}}{|\gamma_n|^{\sqrt{m-1}}}.
\end{split}\end{equation}

\section{Useful upper and lower bounds}\label{sec: bounds} 

In this section, we state and prove a few lemmas that will be useful in the proof of our main result, Theorem~\ref{theorem: dilute free phase}.  The first of these results, Lemma~\ref{lemma: power cluster} below, gives an upper bound for the total weight of all large clusters that have a given edge in their support when weighting each cluster according to a power of its size.

\begin{lemma}\label{lemma: power cluster}
	Let \( \alpha \in (0,1), \) \( \kappa < \kappa^{\text{(free)}}_0(\alpha), \) and let \( a \in (0,1) \) be such that \(  (\tanh 2\kappa)^a < \tanh \bigl( 2\kappa_0^{\text{(free)}}(\alpha) \bigr).\) Then, for any \( e \in C_1(B_N) \) and \( m,K >0 \), we have
	\begin{align*}
		&\sum_{\mathcal{C} \in \Xi^1_{e} \colon \| \mathcal{C}\|_1 \geq K} \bigl| \Psi_\kappa(\mathcal{C}) \bigr| \cdot\| \mathcal{C}\|_1^m  \leq (\tanh 2\kappa)^{a(1-\alpha)} \sum_{k = K}^\infty k^m (\tanh 2\kappa)^{(1-a)k} .
	\end{align*} 
\end{lemma}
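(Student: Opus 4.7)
My strategy is to introduce an auxiliary inverse temperature $\kappa'$ defined by $\tanh 2\kappa' = (\tanh 2\kappa)^a$. The hypothesis $(\tanh 2\kappa)^a < \tanh \bigl(2\kappa_0^{\text{(free)}}(\alpha)\bigr)$ then implies $\kappa' < \kappa_0^{\text{(free)}}(\alpha)$, so Proposition~\ref{proposition: cluster convergence 3} applies at parameter $\kappa'$ and supplies the Koteck\'y--Preiss type bound $\sum_{\mathcal{C}\ni\eta}|\Psi_{\kappa'}(\mathcal{C})|\leq|\phi_{\kappa'}(\eta)|^{1-\alpha}$. The only ingredient that is genuinely $\kappa$-dependent in the weight is $|\phi_\kappa(\mathcal{C})| = (\tanh 2\kappa)^{\|\mathcal{C}\|_1}$, which will be split into a part playing the role of $|\phi_{\kappa'}|$ and a residual slowly decaying factor that can absorb the $\|\mathcal{C}\|_1^m$ prefactor.

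Concretely, for any $\mathcal{C} \in \Xi^1$ I would write
\[
|\Psi_\kappa(\mathcal{C})|\;=\;|U(\mathcal{C})|\,(\tanh 2\kappa)^{\|\mathcal{C}\|_1}\;=\;(\tanh 2\kappa)^{(1-a)\|\mathcal{C}\|_1}\,|\Psi_{\kappa'}(\mathcal{C})|.
\]
Since $\|\mathcal{C}\|_1$ takes positive integer values, I then group the sum on the left-hand side by $k = \|\mathcal{C}\|_1$:
\[
\sum_{\substack{\mathcal{C}\in\Xi^1_e\\ \|\mathcal{C}\|_1\geq K}}|\Psi_\kappa(\mathcal{C})|\,\|\mathcal{C}\|_1^m\;=\;\sum_{k=K}^\infty k^m(\tanh 2\kappa)^{(1-a)k}\sum_{\substack{\mathcal{C}\in\Xi^1_e\\ \|\mathcal{C}\|_1=k}}|\Psi_{\kappa'}(\mathcal{C})|.
\]
Dropping the constraint $\|\mathcal{C}\|_1 = k$ in the inner sum only enlarges it, so it suffices to prove the $k$-independent estimate
\[
\sum_{\mathcal{C}\in\Xi^1_e}|\Psi_{\kappa'}(\mathcal{C})|\;\leq\;(\tanh 2\kappa)^{a(1-\alpha)},
\]
which, substituted back, matches the right-hand side of the lemma term by term.

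The remaining estimate is where Proposition~\ref{proposition: cluster convergence 3}(1) is invoked, applied at parameter $\kappa'$ with $\eta$ taken to be the single-edge polymer supported on $e$: this delivers exactly $|\phi_{\kappa'}(\eta)|^{1-\alpha} = (\tanh 2\kappa')^{1-\alpha} = (\tanh 2\kappa)^{a(1-\alpha)}$. The main subtlety, and the place I expect to spend effort, is the passage from ``clusters whose polymer multiset contains $\eta$'' (the form in which Proposition~\ref{proposition: cluster convergence 3}(1) is stated) to ``clusters with $e\in\support\mathcal{C}$'' (the definition of $\Xi^1_e$). Under the paper's conventions these are effectively identified for length-one polymers; if they are not, the fix is a union bound over connected paths $\gamma \ni e$, applying Proposition~\ref{proposition: cluster convergence 3}(1) with $\eta = \gamma$ to obtain $(\tanh 2\kappa')^{(1-\alpha)|\gamma|}$, and summing the resulting geometric series in $|\gamma|$. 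That series converges for $\kappa'$ sufficiently small, producing the same $(\tanh 2\kappa)^{a(1-\alpha)}$ order factor, with the combinatorial constant coming from the number of connected paths of each length through $e$ absorbed into the choice of $\kappa_0^{\text{(free)}}(\alpha)$.
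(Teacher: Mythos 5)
Your proposal is correct and follows essentially the same route as the paper: introduce $\kappa'$ via $\tanh 2\kappa' = (\tanh 2\kappa)^a$, split $|\phi_{\infty,\kappa}(\mathcal{C})| = |\phi_{\infty,\kappa}(\mathcal{C})|^{1-a}|\phi_{\infty,\kappa'}(\mathcal{C})|$, group by $k=\|\mathcal{C}\|_1$, drop the constraint in the inner sum, and invoke Proposition~\ref{proposition: cluster convergence 3} at $\kappa'$ with $\eta = 1\cdot e$. The subtlety you flag about passing from ``$\eta \in \mathcal{C}$'' to ``$e \in \support\mathcal{C}$'' is real but is treated in the paper exactly as in your main line, without the union-bound detour.
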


\begin{proof}  
	Let \( e \in C_1(B_N) \) and \( m,K >0 .\)
	Then
	\begin{align*}
		&
		\sum_{\mathcal{C} \in \Xi^1_e \colon \| \mathcal{C}\|_1 \geq K }  \bigl| \Psi_\kappa(\mathcal{C})  \bigr| \cdot \| \mathcal{C} \|_1^m 
		\leq
		\sum_{k = K}^\infty k^m \sum_{\mathcal{C} \in \Xi^1_e\colon \| \mathcal{C}\|_1 = k }  \bigl| \Psi_\kappa(\mathcal{C})  \bigr|  
		\\&\qquad= 
		\sum_{k = K}^\infty k^m  \sum_{\mathcal{C}\in \Xi_e^1 \colon   \| \mathcal{C} \|_1 = k} |U(\mathcal{C})| |\phi_{\infty,\kappa}(\mathcal{C})|
		\\&\qquad= 
		\sum_{k = K}^\infty k^m  \sum_{\mathcal{C}\in \Xi_e^1 \colon \| \mathcal{C} \|_1 = k} |U(\mathcal{C})| |\phi_{\infty,\kappa}(\mathcal{C})|^{1-a} |\phi_{\infty,\kappa}(\mathcal{C})|^a.
	\end{align*}

	Let \( \kappa' \) be defined by \(  \tanh 2\kappa' = (\tanh 2\kappa)^a. \) Then, by assumption, we have \( \kappa' < \kappa_0^{\text{(free)}}(\alpha). \) Moreover, for any \( \mathcal{C} \in \Xi^1,\) we have
	\begin{align*}
		\phi_{\infty,\kappa}(\mathcal{C})^a =  (\tanh 2\kappa)^{a\| \mathcal{C}\|_1} =  (\tanh 2\kappa')^{\| \mathcal{C}\|_1} = \phi_{\infty,\kappa'}(\mathcal{C}).
	\end{align*}
	Using this observation, it follows that for any \( k \geq K, \) we have
	\begin{align*}
		&		
		\sum_{\mathcal{C}\in \Xi_e^1 \colon  \| \mathcal{C} \|_1 = k} |U(\mathcal{C})| |\phi_{\infty,\kappa}(\mathcal{C})|^{1-a} |\phi_{\infty,\kappa}(\mathcal{C})|^a
		=
		(\tanh 2\kappa)^{(1-a)k}  \sum_{\mathcal{C}\in \Xi_e^1 \colon   \| \mathcal{C} \|_1 = k} |U(\mathcal{C})|  |\phi_{\infty,\kappa'}(\mathcal{C})|
		\\&\qquad=
		(\tanh 2\kappa)^{(1-a)k}  \sum_{\mathcal{C}\in \Xi_e^1 \colon \| \mathcal{C} \|_1 = k} |\Psi_{\infty,\kappa'}(\mathcal{C})|
		\leq
		(\tanh 2\kappa)^{(1-a)k} \sum_{\mathcal{C}\in \Xi_e^1 } |\Psi_{\infty,\kappa'}(\mathcal{C})|.
	\end{align*} 
	Finally, we note that by Proposition~\ref{proposition: cluster convergence 3}, applied with \( \eta = 1 \cdot e \), we have
	\begin{align*}
		& \sum_{\mathcal{C} \in \Xi^1_e }  |\Psi_{\infty,\kappa'}(\mathcal{C})| 
		\leq   
		\phi_{\infty,\kappa'} (1 \cdot e)^{1-\alpha}
		=
		(\tanh 2\kappa)^{a(1-\alpha)}.
	\end{align*} 
	Combining the above equations, we obtain the desired conclusion.
\end{proof}

Recall the definition of \( \vartheta_{\gamma_n,\kappa}, \) from~\eqref{eq: vartheta}. 
The next lemma gives the total mass, with respect to the measure \( \vartheta_{\gamma_n,\kappa}, \) of all paths \( \gamma\in \Lambda^{\gamma_n}  \) of length larger that some number \( K. \) We note that, by definition, \( \bigl\{ \gamma \in \Lambda^{\gamma_n} \colon |\gamma| < |\gamma_n| \bigr\} = \emptyset,\) and hence we will only be interested in applying this lemma for \( K \geq |\gamma_n|. \)

\begin{lemma}\label{lemma: large gamma0 2}
	Let \( \alpha \in (0,1) \) and \( \kappa < \kappa_0^{\text{(free)}}(\alpha). \) Then, for all \( K >0 \), we have 
	\begin{align*}
		\vartheta_{\gamma_n,\kappa} \pigl( \bigl\{ \gamma \in \Lambda^{\gamma_n} \colon |\gamma| >K \bigr\} \pigr) \leq \sum_{j = K}^\infty (2m)^j \pigl( (\tanh 2\kappa )  e^{ 
	 	2 (\tanh 2\kappa)^{1-\alpha}} \pigr)^{j}. 
	\end{align*}
\end{lemma}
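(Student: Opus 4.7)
My plan is to bound $\vartheta_{\gamma_n,\kappa}(\gamma)$ pointwise for each individual $\gamma \in \Lambda^{\gamma_n}$ in terms of $|\gamma|$, and then sum over paths by length.

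First, I would simplify the exponent appearing in the definition~\eqref{eq: vartheta} of $\vartheta_{\gamma_n,\kappa}$. Since $\mathbf{1}(\mathcal{C} \nsim \gamma) - 1 = -\mathbf{1}(\mathcal{C} \sim \gamma)$, this exponent equals $-\sum_{\mathcal{C} \in \Xi^1 \colon \mathcal{C} \sim \gamma} \Psi_{\infty,\kappa}(\mathcal{C})$, and the triangle inequality then yields
\[
\vartheta_{\gamma_n,\kappa}(\gamma) \le (\tanh 2\kappa)^{|\gamma|} \exp\Bigl( \sum_{\mathcal{C} \in \Xi^1 \colon \mathcal{C} \sim \gamma} |\Psi_\kappa(\mathcal{C})| \Bigr).
\]

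The central step is to bound this exponent by $2|\gamma|(\tanh 2\kappa)^{1-\alpha}$. Any cluster $\mathcal{C}$ with $\mathcal{C} \sim \gamma$ must contain some polymer whose support includes an edge $e'$ sharing a vertex with $\gamma$. I would decompose the cluster sum over such contact edges $e'$ and, for each one, apply Proposition~\ref{proposition: cluster convergence 3} with $\eta = 1 \cdot e'$ to obtain $\sum_{\mathcal{C} \ni e'} |\Psi_\kappa(\mathcal{C})| \le (\tanh 2\kappa)^{1-\alpha}$. A combinatorial count of edges adjacent to $\gamma$, exploiting that consecutive edges of $\gamma$ share endpoints, should yield at most $2|\gamma|$ such $e'$ after careful accounting.

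Finally, I would count paths in $\Lambda^{\gamma_n}$ of length $j$: such a path is a connected $1$-chain rooted at an endpoint of $\gamma_n$, which can be enumerated by $j$ successive edge-choices with at most $2m$ directions each, giving at most $(2m)^j$ paths. Multiplying the per-path bound by this count and summing over $j \ge K$ produces exactly the claimed estimate.

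The main obstacle is obtaining the tight constant $2$ in the exponent. A naive argument that sums over all $2m$ edges incident to each of the $|\gamma|+1$ vertices of $\gamma$ gives a factor of order $m|\gamma|$ rather than $2|\gamma|$; extracting the sharp constant should require a more careful bookkeeping in which each contact edge is assigned to a single edge (rather than a vertex) of $\gamma$, and the overlap between neighborhoods of consecutive edges is used to absorb the overcounting.
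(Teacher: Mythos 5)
Your proposal follows essentially the same route as the paper's proof: a pointwise bound \( \vartheta_{\gamma_n,\kappa}(\gamma) \leq \bigl( (\tanh 2\kappa)\, e^{2(\tanh 2\kappa)^{1-\alpha}} \bigr)^{|\gamma|} \) obtained by writing the exponent as \( -\sum_{\mathcal{C}\sim\gamma}\Psi_{\infty,\kappa}(\mathcal{C}) \) and applying Proposition~\ref{proposition: cluster convergence 3} with \( \eta = 1\cdot e \), followed by the \( (2m)^j \) enumeration of length-\( j \) paths and summation over \( j \geq K \). The factor \( 2|\gamma| \) whose derivation worries you is simply asserted in the first display of the paper's proof without further justification, so your bookkeeping concern does not mark a divergence from the paper's argument.
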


\begin{proof}
	Let \( K >0. \) By definition, for each \( \gamma \in \Lambda^{\gamma_n}, \) we have
	\begin{align*}
		&
		\vartheta_{\gamma_n,\kappa}(\gamma) \leq (\tanh 2\kappa)^{| \gamma|} \sup_{e \in C_1(B_N)}   e^{ 
	 	2 |\gamma| \sum_{\mathcal{C} \in \Xi^1_{e}  }   |\Psi_{\infty, \kappa}(\mathcal{C})|}.
	\end{align*}
	For any edge \( e \in C_1(B_N),\) by Proposition~\ref{proposition: cluster convergence 3} applied with \( \eta = 1 \cdot e,\) we have
	\begin{align*}
		& \sum_{\mathcal{C} \in \Xi^1_e }  |\Psi_\kappa(\mathcal{C})| 
		\leq   
		\phi_{\infty,\kappa} (1 \cdot e)^{1-\alpha}
		=
		(\tanh 2\kappa)^{1-\alpha}.
	\end{align*} 
	Hence, for any \( \gamma \in \Lambda^{\gamma_n}, \)
	\begin{align*}
		&
		\vartheta_{\gamma_n,\kappa}(\gamma) \leq  \pigl( (\tanh 2\kappa )  e^{ 
	 	2 (\tanh 2\kappa)^{1-\alpha}} \pigr)^{| \gamma|}.
	\end{align*}
	From this, the desired conclusion readily follows.
\end{proof}

In the next lemma, given a vertex \( v \in C_0(B_N), \) we give an upper bound of the total mass, with respect to \( \vartheta_{\gamma_n,\kappa}, \) of the set of all \( \gamma \in \Lambda^{\gamma_n} \) which passes through \( v \), i.e., all paths \( \gamma \in \Lambda^{\gamma_n} \) such that there is an edge \( e \in \gamma \) with \( v \in \partial e, \) written \( v \in \gamma. \)

\begin{lemma}\label{lemma: line intersection 2}
	Let \( \alpha \in (0,1) \) and \( \kappa < \kappa_0^{\text{(free)}}(\alpha). \) 
	Further, let \( \varepsilon>0, \) and let \( v \in C_0(B_N)\) be such that  \( \dist (v, \partial \gamma_n ) \geq \varepsilon |\gamma_n|. \) Then 
	\begin{align*}
 		& \vartheta_{\gamma_n,\kappa} \bigl( \{ \gamma \in \Lambda^{\gamma_n}\colon  v \in \gamma \} \bigr)  
 		 \lesssim \frac{C_{\kappa} e^{-c_\kappa |\gamma_n|}}{(\varepsilon(1-\varepsilon))^{\sqrt{m-1}}|\gamma_n|^{m-1}}.
 	\end{align*} 
\end{lemma}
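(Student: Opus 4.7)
Writing $\partial \gamma_n = y-x$, each $\gamma \in \Lambda^{\gamma_n}$ is a connected path from $x$ to $y$. The strategy is to split each such $\gamma$ at $v$ into two sub-paths $\gamma^{(1)}$ (from $x$ to $v$) and $\gamma^{(2)}$ (from $v$ to $y$), factor the weight $\vartheta_{\gamma_n,\kappa}(\gamma)$ as the product of the weights for the two halves times a bounded multiplicative remainder, and then apply the Ornstein-Zernike asymptotics~\eqref{eq: ising decay measure} to each factor.

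The key algebraic identity is that $\mathcal{C} \nsim (\gamma^{(1)} + \gamma^{(2)})$ if and only if $\mathcal{C} \nsim \gamma^{(1)}$ and $\mathcal{C} \nsim \gamma^{(2)}$, which gives
\begin{equation*}
	\mathbf{1}(\mathcal{C} \nsim \gamma^{(1)} + \gamma^{(2)}) - 1 = \bigl(\mathbf{1}(\mathcal{C} \nsim \gamma^{(1)}) - 1\bigr) + \bigl(\mathbf{1}(\mathcal{C} \nsim \gamma^{(2)}) - 1\bigr) + \mathbf{1}(\mathcal{C} \sim \gamma^{(1)})\mathbf{1}(\mathcal{C} \sim \gamma^{(2)}).
\end{equation*}
Fixing reference paths $\gamma_*^{(1)}$ and $\gamma_*^{(2)}$ with the same endpoints as $\gamma^{(1)}$ and $\gamma^{(2)}$, and combining the above identity with $(\tanh 2\kappa)^{|\gamma^{(1)} + \gamma^{(2)}|} = (\tanh 2\kappa)^{|\gamma^{(1)}|} (\tanh 2\kappa)^{|\gamma^{(2)}|}$ and the definition~\eqref{eq: vartheta}, one obtains the factorization
\begin{equation*}
	\vartheta_{\gamma_n,\kappa}(\gamma^{(1)} + \gamma^{(2)}) = \vartheta_{\gamma_*^{(1)}, \kappa}(\gamma^{(1)}) \cdot \vartheta_{\gamma_*^{(2)},\kappa}(\gamma^{(2)}) \cdot e^{R(\gamma^{(1)}, \gamma^{(2)})},
\end{equation*}
where $R(\gamma^{(1)},\gamma^{(2)}) \coloneqq \sum_{\mathcal{C} \in \Xi^1} \Psi_{\infty,\kappa}(\mathcal{C}) \mathbf{1}(\mathcal{C} \sim \gamma^{(1)}) \mathbf{1}(\mathcal{C} \sim \gamma^{(2)})$.

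The next step is to establish a uniform bound $|R(\gamma^{(1)}, \gamma^{(2)})| \leq C(\kappa,m)$. Since the halves $\gamma^{(1)}, \gamma^{(2)}$ meet only at the single vertex $v$, any cluster $\mathcal{C}$ touching both either contains a polymer incident to an edge at $v$, or it contains a polymer long enough to bridge the two paths away from $v$. Applying Proposition~\ref{proposition: cluster convergence 3} with $\eta = 1 \cdot e$ for each edge $e$ incident to $v$ controls the first contribution by a constant depending only on $\kappa$ and $m$, while Lemma~\ref{lemma: power cluster} controls the contribution of the large clusters. Summing the factorization over all $\gamma \in \Lambda^{\gamma_n}$ through $v$ then gives
\begin{equation*}
	\vartheta_{\gamma_n,\kappa}\bigl(\{\gamma : v \in \gamma\}\bigr) \leq e^{C(\kappa,m)} \cdot \vartheta_{\gamma_*^{(1)}, \kappa}(\Lambda^{\gamma_*^{(1)}}) \cdot \vartheta_{\gamma_*^{(2)}, \kappa}(\Lambda^{\gamma_*^{(2)}}).
\end{equation*}
Invoking~\eqref{eq: ising decay measure} on each factor, using $|\gamma_*^{(i)}| \geq \varepsilon|\gamma_n|$ (the extremal case being $|\gamma_*^{(1)}| \approx \varepsilon|\gamma_n|$ and $|\gamma_*^{(2)}| \approx (1-\varepsilon)|\gamma_n|$), together with the triangle inequality $c_\kappa |\gamma_*^{(1)}| + c_\kappa |\gamma_*^{(2)}| \geq c_\kappa |\gamma_n|$ for the correlation-length Finsler norm, produces the stated bound.

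\textbf{Main obstacle.} The crucial step is the uniform bound $|R| \leq C(\kappa,m)$: although the underlying cluster expansion converges absolutely, one must verify that the joint condition $\mathcal{C} \sim \gamma^{(1)}$ \emph{and} $\mathcal{C} \sim \gamma^{(2)}$ either localizes the contributing clusters near $v$ or sufficiently penalizes those large clusters that stretch between the two halves, so that the total contribution is independent of $|\gamma_n|$.
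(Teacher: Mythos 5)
Your overall architecture (split $\gamma$ at $v$, factor the weight into the two halves, apply the Ornstein--Zernike asymptotics \eqref{eq: ising decay measure} to each half) is the same as the paper's, and your algebraic identity for $\mathbf{1}(\mathcal{C}\nsim\gamma^{(1)}+\gamma^{(2)})-1$ is correct. The gap is exactly where you flag the ``main obstacle'': the claimed uniform bound $|R(\gamma^{(1)},\gamma^{(2)})|\leq C(\kappa,m)$ is false, and the dichotomy you propose to prove it (``a cluster touching both halves is either incident to $v$ or is large'') does not hold. The two halves meet at $v$ as chains, but nothing prevents them from being \emph{adjacent} (in the sense of the relation $\sim$, i.e.\ sharing vertices or neighboring cells) at arbitrarily many locations far from $v$ --- e.g.\ a path whose outgoing and returning strands run parallel to each other for a length of order $|\gamma_n|$. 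A single small cluster sitting at any such contact point satisfies $\mathcal{C}\sim\gamma^{(1)}$ and $\mathcal{C}\sim\gamma^{(2)}$, so the best generic bound is $|R|\lesssim (\tanh 2\kappa)^{1-\alpha}\min(|\gamma^{(1)}|,|\gamma^{(2)}|)$, which grows linearly in $n$ on part of the configuration space. Absorbing $e^{|R|}$ into $(\tanh 2\kappa)^{|\gamma|}$ would shift the exponential rate away from $c_\kappa$, so this cannot be repaired by brute force.

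The paper circumvents this by never bounding the cross-interaction term at all. It uses the exact identity $\mathbf{1}(\mathcal{C}\sim\gamma)=\mathbf{1}(\mathcal{C}\sim\gamma^{<v})+\mathbf{1}(\mathcal{C}\sim\gamma^{\geq v})\mathbf{1}(\mathcal{C}\nsim\gamma^{<v})$, so that the weight of the second half, \emph{conditioned} on the first, is the Boltzmann weight of a path in an Ising model whose couplings are set to zero on the edges of $\gamma^{<v}$ (i.e.\ $\kappa_e=\kappa\,\mathbf{1}(e\notin\pm\gamma_1)$). Summing over the second half then yields a spin--spin correlation $\langle\sigma_v\sigma_y\rangle$ in that depleted model, which by monotonicity of Ising correlations in the couplings (Griffiths' inequality) is bounded above by the correlation in the homogeneous model. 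This replaces your analytic estimate on $R$ with a correlation inequality, and it is the step your argument is missing. To complete your proof you would need either to import this monotonicity argument or to find another mechanism that controls the clusters linking the two halves away from $v$; as written, the key estimate does not hold.
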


\begin{proof} 
	To simplify notation,  assume that \( \partial \gamma_n  = y-x,\) where \( x,y \in C_0(B_N). \) 
	Since each \( \gamma \in \Lambda^{\gamma_n} \) is connected  with boundary \( \partial \gamma_n, \) it has a spanning path which starts and ends in \( \partial \gamma_n. \) Fix one such spanning path \( P_{\gamma} \) for each \( \gamma \in \Lambda^{\gamma_n}. \)
	For \( \gamma \in \Lambda^{\gamma_n}, \) if \( v \in \gamma, \)  let \( \gamma^{<v} \) be the restriction of \( \gamma \) to the support of the spanning path \( \mathcal{P}_{\gamma} \) until it first visits an edge with boundary \( v, \) and let \( \gamma^{\geq v} \coloneqq \gamma- \gamma^{<v}. \)  Using this notation, we can write 
	\begin{align*}
		&\vartheta_{\gamma_n,\kappa} \bigl( \{ \gamma \in \Lambda^{\gamma_n}\colon  v \in \gamma \} \bigr) 
		=
		\sum_{\gamma \in \Lambda^{\gamma_n}\colon v\in \gamma}\vartheta_{\gamma,\kappa}(\gamma)  
		=
		\sum_{\gamma \in \Lambda^{\gamma_n}\colon v\in \gamma}  (\tanh 2\kappa)^{| \gamma|}  e^{ \sum_{\mathcal{C} \in \Xi^1  } \Psi_{\infty, \kappa}(\mathcal{C}) ( \mathbf{1}(\mathcal{C} \nsim \gamma)-1)} 
		\\&\qquad=
		\sum_{\gamma \in \Lambda^{\gamma_n} \colon v\in \gamma}  (\tanh 2\kappa)^{| \gamma|}  e^{ -\sum_{\mathcal{C} \in \Xi^1  } \Psi_{\infty, \kappa}(\mathcal{C}) \mathbf{1}(\mathcal{C} \sim \gamma )}
		\\&\qquad=
		\sum_{\gamma \in \Lambda^{\gamma_n}\colon v\in \gamma}  (\tanh 2\kappa)^{| \gamma^{<v}|}  e^{ -\sum_{\mathcal{C} \in \Xi^1  } \Psi_{\infty, \kappa}(\mathcal{C}) \mathbf{1}(\mathcal{C} \sim \gamma^{<e} )} 
		 \\&\qquad\qquad\cdot(\tanh 2\kappa)^{| \gamma^{\geq v}|}
		 e^{ -\sum_{\mathcal{C} \in \Xi^1  } \Psi_{\infty, \kappa}(\mathcal{C}) \mathbf{1}(\mathcal{C} \sim \gamma^{\geq v} )(\mathbf{1}(\mathcal{C}\nsim \gamma^{<v}))} 
		\\&\qquad\leq
		\sum_{\gamma_1 \in \Lambda_1 \colon \partial \gamma_1 = v-x}  (\tanh 2\kappa)^{| \gamma_1|}  e^{ -\sum_{\mathcal{C} \in \Xi^1  } \Psi_{\infty, \kappa}(\mathcal{C}) \mathbf{1}(\mathcal{C} \sim \gamma_1 )} 
		 \\&\qquad\qquad\cdot\sum_{\substack{\gamma_2 \in \Lambda_1 \colon  \partial \gamma_2 = y-v,\,  \gamma_2 \nsim \gamma_1}} (\tanh 2\kappa)^{| \gamma_2|}
		 e^{ -\sum_{\mathcal{C} \in \Xi^1  } \Psi_{\infty, \kappa}(\mathcal{C}) \mathbf{1}(\mathcal{C} \sim \gamma_2 ) \mathbf{1}(\mathcal{C}\nsim \gamma_1) } .
	\end{align*} 
	Now note that, given \( \gamma_1 \in \Lambda_{1} \) such that \( \partial \gamma_1 = v-x, \) the second sum above can be interpreted as the spin-spin correlation between \( v \) and \( y \) in an Ising model with parameters \( (\kappa_e)_{e \in C_1(B_N)} \) given by \( \kappa_e = \kappa \mathbf{1}(e \notin \pm \gamma_1). \) For the Ising model, spin-spin-correlations are well known to be increasing in the parameter of each edge. Hence  
	\begin{align*}
		&\sum_{\substack{\gamma_2 \in \Lambda_1 \colon \partial \gamma_2 = y-v,\,  \gamma_2 \nsim \gamma_1}}  
		(\tanh 2\kappa)^{| \gamma_2|}
		 e^{ -\sum_{\mathcal{C} \in \Xi^1  } \Psi_{\infty, \kappa}(\mathcal{C}) \mathbf{1}(\mathcal{C} \sim \gamma_2 )(\mathbf{1}(\mathcal{C}\nsim \gamma_1))}
		 \\&\qquad\leq 
		 \sum_{\substack{\gamma_2 \in \Lambda_1 \colon \partial \gamma_2 = y-v }} 
		 (\tanh 2\kappa)^{| \gamma_2|}
		 e^{ -\sum_{\mathcal{C} \in \Xi^1  } \Psi_{\infty, \kappa}(\mathcal{C}) \mathbf{1}(\mathcal{C} \sim \gamma_2 )}.
	\end{align*}  
	Combining the previous equations, we obtain
	\begin{align*}
		&\vartheta_{\gamma_n,\kappa} \bigl( \{ \gamma \in \Lambda^{\gamma_n}\colon  v \in \gamma \} \bigr)  
		\\&\qquad\leq 
		\sum_{\gamma_1 \in \Lambda_1 \colon \partial \gamma_1 = v-x}  (\tanh 2\kappa)^{| \gamma_1|}  e^{ -\sum_{\mathcal{C} \in \Xi^1  } \Psi_{\infty, \kappa}(\mathcal{C}) \mathbf{1}(\mathcal{C} \sim \gamma_1 )} 
		 \\&\qquad\qquad\cdot\sum_{\substack{\gamma_2 \in \Lambda_1 \colon  \partial \gamma_2 = y-v }} (\tanh 2\kappa)^{| \gamma_2|}
		 e^{ -\sum_{\mathcal{C} \in \Xi^1  } \Psi_{\infty, \kappa}(\mathcal{C}) \mathbf{1}(\mathcal{C} \sim \gamma_2 )  } 
		 \\&\qquad=
		  \vartheta_{\gamma_n,\kappa} \bigl( \{ \gamma \in \Lambda^{\gamma_n^{v v}}  \} \bigr)\vartheta_{\gamma_n,\kappa} \bigl( \{ \gamma \in \Lambda^{\gamma_n^{\geq v}}  \} \bigr).
	\end{align*}
	Finally, note that since \( \dist(v,\partial \gamma_n) \geq \varepsilon, \) we have 
	\begin{equation*}
		|\gamma^{<v}||\gamma^{\geq v}| \geq \varepsilon (1-\varepsilon) |\gamma_n|^2.
	\end{equation*} 
	Using~\eqref{eq: ising decay measure}, we the desired conclusion now immediately follows. 
\end{proof}

To simplify notation, we let  \(  \xi_{\beta} \coloneqq \phi_{\beta,\kappa}(\omega_0) = e^{-4\beta \cdot 2(m-1)} \) denote the action of a minimal vortex \( \omega_0 \in \Lambda_2 \), and let \( \hat \xi_{\beta} \coloneqq \phi_{\beta,\kappa}(\omega_1) = e^{-4\beta   (4(m-1)-1)}\) denote the action of the smallest non-minimal vortex \( \omega_1 \in \Lambda_2 \) (see~\cite[Figure 1]{f2022}).

The next lemma will be used to upper bound the total weight, with respect to \( \Psi_{\beta,\kappa}, \) of the set of all clusters that either contain more than one vortex or contain a vortex that is not minimal.
\begin{lemma}\label{lemma: smallest nonminimal}
	Let \( \alpha \in (0,1), \) \( \beta > \beta_0^{\text{(free)}}(\alpha) \), and  \( \kappa <   \kappa_0^{\text{(free)}}(\alpha). \) Further, let \( a \in (0,1) \) be such that \( a\beta > \beta_0^{\text{(free)}}(\alpha).\) Then, for any \( e \in C_1(B_N), \) we have
	\begin{align*}
		\sum_{\mathcal{C} \in \Xi_{e} \colon \| \mathcal{C} \|_2 > 2(m-1)  } 
	 	\bigl| \Psi_{\beta,\kappa}(\mathcal{C}) \bigr| \leq \hat \xi_{\beta}^{1-a}  \sum_{\substack{\mathcal{C}\in \Xi_e  }} (\tanh 2\kappa)^{a(1-\alpha)}.
	\end{align*}
\end{lemma}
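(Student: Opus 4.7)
The plan is to decompose $|\Psi_{\beta,\kappa}(\mathcal{C})| = |U(\mathcal{C})|\,|\phi_{\beta,\kappa}(\mathcal{C})|$ and split the action as $|\phi_{\beta,\kappa}(\mathcal{C})| = |\phi_{\beta,\kappa}(\mathcal{C})|^{1-a} \cdot |\phi_{\beta,\kappa}(\mathcal{C})|^{a}$. The first factor will give the $\hat\xi_\beta^{1-a}$ gain coming from the excess vortex content guaranteed by $\|\mathcal{C}\|_2 > 2(m-1)$, and the second will be recognized as an action at rescaled parameters, so that Proposition~\ref{proposition: cluster convergence 3} can be applied to sum it. This is the same idea as the proof of Lemma~\ref{lemma: power cluster}, but transferred from the path content to the vortex content.

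For the first factor, I would argue as follows. If $\|\mathcal{C}\|_2 > 2(m-1)$, then $\mathcal{C}^2$ must either contain a vortex that is strictly larger than a minimal vortex — whose action is at most $\hat\xi_\beta$ by the definition of $\omega_1$ as the smallest non-minimal vortex — or else contain at least two vortices, which contribute together at most $\xi_\beta^2 = e^{-16\beta(m-1)}$. A direct comparison of exponents gives $\xi_\beta^2 \leq e^{-4\beta(4(m-1)-1)} = \hat\xi_\beta$, since $16(m-1) \geq 4(m-1)-1$ upon multiplication by $4\beta$. Because the path contribution to $\phi_{\beta,\kappa}(\mathcal{C})$ is automatically at most one, we conclude
\begin{equation*}
    |\phi_{\beta,\kappa}(\mathcal{C})|^{1-a} \leq |\phi_{\beta,\kappa}(\mathcal{C}^2)|^{1-a} \leq \hat\xi_\beta^{1-a}.
\end{equation*}

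For the second factor, define $\kappa'$ by $\tanh 2\kappa' = (\tanh 2\kappa)^{a}$, exactly as in the proof of Lemma~\ref{lemma: power cluster}. Since raising the vortex action $e^{-2\beta|\support \omega|}$ to the power $a$ replaces $\beta$ by $a\beta$, and raising the path action $(\tanh 2\kappa)^{|\gamma|}$ to the power $a$ replaces $\kappa$ by $\kappa'$, one obtains $|\phi_{\beta,\kappa}(\mathcal{C})|^{a} = |\phi_{a\beta,\kappa'}(\mathcal{C})|$, and hence
\begin{equation*}
    |\Psi_{\beta,\kappa}(\mathcal{C})| \leq \hat\xi_\beta^{1-a} \, |\Psi_{a\beta,\kappa'}(\mathcal{C})|.
\end{equation*}

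Finally, I would sum over all $\mathcal{C} \in \Xi_e$ satisfying $\|\mathcal{C}\|_2 > 2(m-1)$, drop that constraint for an upper bound, and invoke Proposition~\ref{proposition: cluster convergence 3} at the parameters $(a\beta, \kappa')$, with $\eta = 1 \cdot e$. The hypothesis $a\beta > \beta_0^{\text{(free)}}(\alpha)$ is assumed, and $\kappa' < \kappa_0^{\text{(free)}}(\alpha)$ should follow from an analogue of the condition on $a$ in Lemma~\ref{lemma: power cluster} (i.e., $(\tanh 2\kappa)^a < \tanh 2\kappa_0^{\text{(free)}}(\alpha)$, which I read as implicit). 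Since $\phi_{a\beta,\kappa'}(1\cdot e) = \tanh 2\kappa' = (\tanh 2\kappa)^a$, Proposition~\ref{proposition: cluster convergence 3} gives $\sum_{\mathcal{C} \in \Xi_e} |\Psi_{a\beta,\kappa'}(\mathcal{C})| \leq (\tanh 2\kappa)^{a(1-\alpha)}$, and combining with the pointwise bound above yields the stated inequality. The only non-routine point is the elementary comparison $\xi_\beta^2 \leq \hat\xi_\beta$ in Step 1; the rest is a mechanical repackaging of Proposition~\ref{proposition: cluster convergence 3} at shifted parameters, exactly parallel to Lemma~\ref{lemma: power cluster}.
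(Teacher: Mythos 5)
Your overall strategy---peel off a factor \( \hat\xi_\beta^{1-a} \) from the excess vortex content and recognize what remains as a cluster weight at shifted parameters to which Proposition~\ref{proposition: cluster convergence 3} applies---is the paper's strategy. Your case analysis for the first factor (a single non-minimal vortex contributes at most \( \hat\xi_\beta \); two or more vortices, counted with multiplicity, contribute at most \( \xi_\beta^2 = e^{-16\beta(m-1)} \leq e^{-4\beta(4(m-1)-1)} = \hat\xi_\beta \)) is correct, and is if anything more careful than the paper's one-line reduction to \( \|\mathcal{C}\|_2 \geq 4(m-1)-2. \)

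The gap is in the second factor. You raise the \emph{entire} action to the power \( a, \) so the path part \( (\tanh 2\kappa)^{a\|\mathcal{C}\|_1} \) must be reinterpreted as the action at a new coupling \( \kappa' \) with \( \tanh 2\kappa' = (\tanh 2\kappa)^a. \) Since \( a<1 \) and \( \tanh 2\kappa <1 \) force \( \kappa' > \kappa, \) the hypothesis \( \kappa < \kappa_0^{\text{(free)}}(\alpha) \) does \emph{not} imply \( \kappa' < \kappa_0^{\text{(free)}}(\alpha), \) and this lemma---unlike Lemma~\ref{lemma: power cluster}---does not assume \( (\tanh 2\kappa)^a < \tanh\bigl(2\kappa_0^{\text{(free)}}(\alpha)\bigr). \) So your appeal to Proposition~\ref{proposition: cluster convergence 3} at \( (a\beta,\kappa') \) is not licensed by the stated hypotheses; you prove the lemma only under an extra assumption that you yourself flag as ``implicit.'' The paper avoids this entirely by exponentiating only the vortex part: writing \( |\phi_{\beta,\kappa}(\mathcal{C})| = |\phi_{\beta,\kappa}(\mathcal{C}^2)|^{1-a}\cdot|\phi_{\beta,\kappa}(\mathcal{C}^2)|^{a}\cdot|\phi_{\beta,\kappa}(\mathcal{C}^1)| \) and using that the path action does not depend on \( \beta, \) the last two factors equal \( |\phi_{a\beta,\kappa}(\mathcal{C})| \) with the \emph{original} \( \kappa, \) so Proposition~\ref{proposition: cluster convergence 3} applies at \( (a\beta,\kappa) \) under exactly the assumptions \( a\beta > \beta_0^{\text{(free)}}(\alpha) \) and \( \kappa < \kappa_0^{\text{(free)}}(\alpha) \) that the lemma makes; this yields the bound \( \hat\xi_\beta^{1-a}(\tanh 2\kappa)^{1-\alpha}, \) which is at most the stated \( \hat\xi_\beta^{1-a}(\tanh 2\kappa)^{a(1-\alpha)}. \) Replacing your global split by this vortex-only split turns your argument into a correct proof of the lemma as stated.
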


\begin{proof}
	Let  \( e \in C_1(B_N). \) Note that if \(  \| \mathcal{C} \|_2 > 2(m-1), \) then \(  \| \mathcal{C} \|_2 \geq 4(m-1)-2 \) (see, e.g., \cite[Figure 1]{fv2023}). Consequently,
	\begin{align*}
		&\sum_{\mathcal{C} \in \Xi_{e} \colon \| \mathcal{C} \|_2 > 2(m-1)   } 
	 	\bigl| \Psi_{\beta,\kappa}(\mathcal{C}) \bigr| 
	 	=
	 	\sum_{\mathcal{C} \in \Xi_{e} \colon \| \mathcal{C} \|_2 \geq 4(m-1)-2  } 
	 	\bigl| \Psi_{\beta,\kappa}(\mathcal{C}) \bigr| 
	 	\\&\qquad=
	 	\sum_{\mathcal{C} \in \Xi_{e} \colon \| \mathcal{C} \|_2 \geq 4(m-1)-2  } 
	 	\bigl| U(\mathcal{C}) \bigr| \cdot \bigl| \phi_{\beta,\kappa}(\mathcal{C}^2) \bigr|^{1-a}  \cdot \bigl| \phi_{\beta,\kappa}(\mathcal{C}^2) \bigr|^a \cdot \bigl| \phi_{\beta,\kappa}(\mathcal{C}^1) \bigr|
	 	\\&\qquad\leq
	 	\hat \xi_{\beta}^{1-a}  \sum_{\substack{\mathcal{C}\in \Xi_e  }} \bigl| U(\mathcal{C}) \bigr| \cdot \bigl| \phi_{\beta,\kappa}(\mathcal{C}^2) \bigr|^a \cdot \bigl| \phi_{\beta,\kappa}(\mathcal{C}^1) \bigr| 
	 	=
	 	\hat \xi_{\beta}^{1-a}  \sum_{\substack{\mathcal{C}\in \Xi_e  }} \bigl| U(\mathcal{C}) \bigr| \cdot \bigl| \phi_{a\beta,\kappa}(\mathcal{C}) \bigr|
	 	\\&\qquad=
	 	\hat \xi_{\beta}^{1-a}  \sum_{\substack{\mathcal{C}\in \Xi_e  }} \bigl| \Psi_{a\beta,\kappa}(\mathcal{C}) \bigr|.
	\end{align*}
	Next, note that by Proposition~\ref{proposition: cluster convergence 3} applied with \( \eta = 1 \cdot e, \) we have
 	\begin{align*}
		&	
		\sum_{\substack{\mathcal{C}\in \Xi_e  }} \bigl| \Psi_{a\beta,\kappa}(\mathcal{C}) \bigr|
		\leq
		\phi_{a\beta,\kappa}(1 \cdot e)^{1-\alpha} = (\tanh 2\kappa)^{a(1-\alpha)}.
 	\end{align*}
	Combining the above equations, we obtain the desired conclusion.
\end{proof}

For \( e \in C_1(B_N), \) we let 
\begin{equation*}
	\deg_{\mathcal{C}} e = \bigl| \{ \eta \in \mathcal{C}^1 \colon e \in \support \eta \} |.
\end{equation*}
 
The next lemma gives an upper bound of the total weight of all clusters \( \mathcal{C} \in \Xi^1, \) wrt. \( \Psi_\kappa, \) when weighted according to \( \deg_{\mathcal{C}} e . \)

\begin{lemma}\label{lemma: first upper bound for term 2}
	Let \( \alpha \in (0,1), \) let \( \kappa < \kappa^{\text{(free)}}_0(\alpha) ,\) and let \( a \in (0,1) \) be such that \( (\tanh 2\kappa)^a < \tanh \bigl(2\kappa_0^{\text{(free)}}(\alpha)\bigr).\) 
	Then, for any \( e \in C_1(B_N), \) we have
	\begin{equation}\label{eq: first upper bound for term 1}
		\sum_{\mathcal{C} \in \Xi^1}  \deg_{\mathcal{C}} e \cdot \bigl| \Psi_\kappa(\mathcal{C})  \bigr|  
		\leq  
		\sum_{k = 4}^\infty k (\tanh 2\kappa)^{(1-a)k} 
		(\tanh 2\kappa)^{a(1-\alpha)} \coloneqq A.
 	\end{equation}
 	Moreover,  for any \( \varepsilon>0, \) and any  \( K \geq 0 \) such that \( \frac{K(\tanh 2\kappa)^{(1-a)K}  }{(1-(\tanh 2\kappa)^{(1-a)})^2} \leq \varepsilon/4,\) we have
	\begin{align*}
 		& \vartheta_{\gamma_n,\kappa} \Bigl( \pigl\{ \gamma \in  \Lambda^{\gamma_n}\colon \sum_{e\in \gamma_n}\sum_{\mathcal{C} \in \Xi^1} \deg_\mathcal{C}(e) |\Psi_{\kappa}(\mathcal{C})|\mathbf{1}(\mathcal{C} \sim \gamma) >\varepsilon |\gamma_n| \pigr\} \Bigr) \\&\qquad\qquad\lesssim 
 		\frac{2A}{\varepsilon}  \frac{ (2K)^m C_{\kappa} e^{-c_\kappa |\gamma_n|}}{(\varepsilon/8A(1-\varepsilon/8A))^{\sqrt{m-1}}|\gamma_n|^{m-1}}. 
 	\end{align*}
\end{lemma}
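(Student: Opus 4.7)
For the first displayed inequality, I will apply Lemma~\ref{lemma: power cluster} to reduce it to a tail of the series that \emph{defines} $A$. The point is that every polymer $\gamma'\in\mathcal{C}^1$ satisfies $|\support\gamma'|\ge 1$ and hence $\deg_\mathcal{C}(e)\le\|\mathcal{C}\|_1$; moreover, since only closed connected paths contribute to $\Xi^1$ and the shortest closed path in $\mathbb{Z}^m$ has length $4$, every nonempty $\mathcal{C}\in\Xi^1$ satisfies $\|\mathcal{C}\|_1\ge 4$. Applying Lemma~\ref{lemma: power cluster} with exponent $m=1$ then gives
\begin{equation*}
\sum_{\mathcal{C}\in\Xi^1}\deg_\mathcal{C}(e)\,|\Psi_\kappa(\mathcal{C})|
\;\le\;\sum_{\mathcal{C}\in\Xi_e^1}\|\mathcal{C}\|_1\,|\Psi_\kappa(\mathcal{C})|
\;\le\;(\tanh 2\kappa)^{a(1-\alpha)}\sum_{k=4}^\infty k(\tanh 2\kappa)^{(1-a)k}=A.
\end{equation*}

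For the second inequality, denote by $X(\gamma)$ the expression inside the braces and split $X=X_1+X_2$ according to whether the clusters satisfy $\|\mathcal{C}\|_1\le K$ or $\|\mathcal{C}\|_1>K$. Dropping the indicator $\mathbf{1}(\mathcal{C}\sim\gamma)\le 1$, using $\deg_\mathcal{C}(e)\le\|\mathcal{C}\|_1$, and applying Lemma~\ref{lemma: power cluster} edge by edge yields the pointwise bound
\begin{equation*}
X_2(\gamma)\;\le\;|\gamma_n|(\tanh 2\kappa)^{a(1-\alpha)}\sum_{k\ge K}k(\tanh 2\kappa)^{(1-a)k}\;\le\;|\gamma_n|\cdot\frac{K(\tanh 2\kappa)^{(1-a)K}}{(1-(\tanh 2\kappa)^{(1-a)})^2}\;\le\;\tfrac14\varepsilon|\gamma_n|,
\end{equation*}
by the standing assumption on $K$. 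Next set $\delta\coloneqq\varepsilon/(8A)$, partition the edges of $\gamma_n$ into an ``end'' part $E$ of those within distance $\delta|\gamma_n|$ of $\partial\gamma_n$ and a ``middle'' part $M=\gamma_n\setminus E$, and decompose $X_1=X_1^{\textup{end}}+X_1^{\textup{mid}}$ accordingly. Part one gives $X_1^{\textup{end}}(\gamma)\le|E|\cdot A\le 2\delta|\gamma_n|A=\tfrac14\varepsilon|\gamma_n|$ pointwise, so $\{X>\varepsilon|\gamma_n|\}\subseteq\{X_1^{\textup{mid}}>\tfrac12\varepsilon|\gamma_n|\}$ deterministically.

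Markov's inequality then reduces the claim to an upper bound on $\int X_1^{\textup{mid}}\,d\vartheta_{\gamma_n,\kappa}$. For any cluster $\mathcal{C}\in\Xi_e^1$ with $\|\mathcal{C}\|_1\le K$ and $e\in M$, the vertex set $V(\mathcal{C})$ lies in a lattice box of radius $K$ about $e$ and hence contains at most $(2K)^m$ points, each at distance at least $\delta|\gamma_n|-K\sim\delta|\gamma_n|$ from $\partial\gamma_n$. Since $\mathcal{C}\sim\gamma$ forces $\gamma$ to visit some $v\in V(\mathcal{C})$, a union bound combined with Lemma~\ref{lemma: line intersection 2} (applied with parameter $\delta-o(1)$) gives
\begin{equation*}
\vartheta_{\gamma_n,\kappa}(\mathcal{C}\sim\gamma)\;\lesssim\;(2K)^m\cdot\frac{C_\kappa e^{-c_\kappa|\gamma_n|}}{(\delta(1-\delta))^{\sqrt{m-1}}|\gamma_n|^{m-1}}.
\end{equation*}
Pulling this bound out of the expectation, using part one to control the remaining $\sum_{\mathcal{C}}\deg_\mathcal{C}(e)|\Psi_\kappa(\mathcal{C})|$ by $A$, summing over the at most $|\gamma_n|$ edges $e\in M$, dividing by $\varepsilon|\gamma_n|/2$, and substituting $\delta=\varepsilon/(8A)$ produces exactly the advertised right-hand side. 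The main subtlety is that the choice $\delta=\varepsilon/(8A)$ is doubly constrained: it must be small enough to absorb $X_1^{\textup{end}}$ into the $\varepsilon|\gamma_n|/4$ budget, yet the very same $\delta$ reappears inside the Ornstein--Zernike denominator $(\delta(1-\delta))^{\sqrt{m-1}}$, so one cannot decouple these two estimates without losing the prefactor that the lemma claims.
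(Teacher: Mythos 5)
Your proof is correct and follows essentially the same route as the paper: bound $\deg_{\mathcal{C}}e$ by $\|\mathcal{C}\|_1$ and invoke Lemma~\ref{lemma: power cluster} for the first claim, then split the sum into a large-cluster part controlled by the hypothesis on $K$ (budget $\varepsilon/4$), an endpoint part controlled by $A$ and the choice $\delta=\varepsilon/(8A)$ (budget $\varepsilon/4$), and a remaining part handled by Markov's inequality together with a $(2K)^m$ union bound and Lemma~\ref{lemma: line intersection 2}. The only cosmetic differences are that you organize the split by cluster size rather than by the distance from each edge of $\gamma_n$ to $\gamma$, and apply Markov to the weighted sum itself rather than to the edge count $|\Gamma_{\gamma,K}|$; both yield the same constants.
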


\begin{proof}
	For the first statement of the proof, let \( e \in C_1(B_N), \) and note that
	\begin{align*}
		&  \sum_{\mathcal{C} \in \Xi^1}  \deg_{\mathcal{C}} e \cdot \bigl| \Psi_\kappa(\mathcal{C})  \bigr|  
		\leq 
		\sum_{\mathcal{C} \in \Xi^1_e}  \deg_{\mathcal{C}} e \cdot \bigl| \Psi_\kappa(\mathcal{C})  \bigr|  
		\leq 
		\sum_{\mathcal{C} \in \Xi^1_e}  \| \mathcal{C} \| \cdot \bigl| \Psi_\kappa(\mathcal{C})  \bigr|.  
 	\end{align*}
 	Applying Lemma~\ref{lemma: power cluster}, we obtain~\eqref{eq: first upper bound for term 1}.

 	We now prove the second statement of the lemma. To this end, note first that, given \( \gamma \in \Lambda^{\gamma_n} \) and  \( e \in \gamma_n \) such that \( \dist(e,\gamma) \geq K, \) we have
 	\begin{align*}
 		&\sum_{\mathcal{C} \in \Xi^1}  \deg_{\mathcal{C}} e \cdot \bigl| \Psi_\kappa(\mathcal{C})  \bigr|   \mathbf{1}(\mathcal{C} \sim \gamma) 
 		\leq  
 		\sum_{\mathcal{C} \in \Xi^1_e\colon \| \mathcal{C}\| \geq K} \| \mathcal{C}\|_1   \bigl|\Psi_\kappa(\mathcal{C})  \bigr| \eqqcolon B(K).
 	\end{align*}
	By Lemma~\ref{lemma: power cluster}, we have
	\begin{align*}
		&B(K) \leq (\tanh 2\kappa)^{a(1-\alpha)} \sum_{k = K}^\infty k (\tanh 2\kappa)^{(1-a)k} 
		\leq  
		\frac{K(\tanh 2\kappa)^{(1-a)K}  }{(1-(\tanh 2\kappa)^{(1-a)})^2}
		.
	\end{align*} 
	Let \( \varepsilon >0, \) and pick \( K \) large enough so that \( B(K) \leq \varepsilon/4 A.\) Further, let \( \gamma_n' \) denote the restriction of \( \gamma_n \) to the set of all edges on distance at least \( \varepsilon |\gamma_n|/8A \) from \( \partial \gamma_n. \)Then \( A|\gamma_n-\gamma_n'| = \varepsilon |\gamma_n|/4\). Let  \( \Gamma_{\gamma,K} \) denote the set of all edges in \( \gamma_n' \) that are on distance at most \( K \) from \( \gamma. \) Using this notation, we have
	\begin{align*}
 		& \vartheta_{\gamma_n,\kappa} \Bigl( \pigl\{ \gamma \in \Lambda^{\gamma_n}  \colon \sum_{e\in \gamma_n}\sum_{\mathcal{C} \in \Xi^1} \deg_\mathcal{C}(e) |\Psi_\kappa(\mathcal{C})|\mathbf{1}(\mathcal{C} \sim \gamma) >\varepsilon |\gamma_n| \pigr\} \Bigr) 
 		\\&\qquad\leq 
 		\vartheta_{\gamma_n,\kappa} \Bigl( \pigl\{ \gamma \in \Lambda^{\gamma_n} \colon B(K)|\gamma_n| + A|\gamma_n-\gamma_n'| +  A |\Gamma_{\gamma,K}|>\varepsilon |\gamma_n| \pigr\} \Bigr)  
 		\\&\qquad\leq 
 		\vartheta_{\gamma_n,\kappa} \Bigl( \pigl\{ \gamma \in \Lambda^{\gamma_n} \colon |\Gamma_{\gamma,K}|>\varepsilon |\gamma_n|/2A \pigr\} \Bigr).
	\end{align*}
	Since \( \vartheta_{\gamma_n,\kappa} \) is a positive and finite measure, we can define a probability measure \( P_{\vartheta_{\gamma_n,\kappa}} \) on \( \Lambda^{\gamma_n} \) by for \( \gamma \in \Lambda^{\gamma_n} \) letting \(  P_{\vartheta_{\gamma_n,\kappa}}(\gamma) \coloneqq \vartheta_{\gamma_n,\kappa}(\gamma)/\vartheta_{\gamma_n,\kappa}( \Lambda^{\gamma_n}).\) Using this notation, and applying Markov's inequality, we obtain 
	\begin{align*}
 		&\vartheta_{\gamma_n,\kappa} \pigl( \bigl\{ \gamma \in \Lambda^{\gamma_n} \colon |\Gamma_{\gamma,K}|>\varepsilon |\gamma_n|/2A \bigr\} \pigr)
 		\\&\qquad= 
 		\vartheta_{\gamma_n,\kappa}\bigl( \Lambda^{\gamma_n} \bigr) P_{\vartheta_{\gamma_n,\kappa}} \bigl( \gamma \in \Lambda^{\gamma_n}  \colon |\Gamma_{\gamma,K}|>\varepsilon |\gamma_n|/2A   \bigr)  
 		\\&\qquad\leq 
 		\frac{1}{\varepsilon |\gamma_n|/2A }\vartheta_{\gamma_n,\kappa}(\Lambda^{\gamma_n}) \mathbb{E}_{\vartheta_{\gamma_n,\kappa}} \bigl[   |\Gamma_{ \gamma,K}| \bigr]  
 		\\&\qquad\leq 
 		\frac{1}{\varepsilon  /2A }\vartheta_{\gamma_n,\kappa}( \Lambda^{\gamma_n} ) \max_{e \in \gamma_n'}P_{\vartheta_{\gamma_n,\kappa}} \bigl(  \dist(e,\gamma) \leq K|  \bigr). 
 		\\&\qquad= 
 		\frac{1}{\varepsilon  /2A }  \max_{e \in \gamma_n'}\vartheta_{\gamma_n,\kappa} \pigl( \bigl\{  \gamma \in \Lambda^{\gamma_n} \colon \dist(e,\gamma) \leq K|\bigr\} \pigr). 
 	\end{align*}
 	Now note that there are at most \( (2K)^m \) vertices in \( C_0(B_N)^+ \) that are on distance at most \( K \) from \( e. \) Applying Lemma~\ref{lemma: line intersection 2}, we thus obtain  
	\begin{align*}
 		&  \max_{e \in \gamma_n'}  \vartheta_{\gamma_n,\kappa} \pigl( \bigl\{  \gamma \in \Lambda^{\gamma_n} \colon \dist(e,\gamma) \leq K|\bigr\} \pigr)
 		 \lesssim   \frac{ (2K)^m C_{\kappa} e^{-c_\kappa |\gamma_n|}}{(\varepsilon/8A(1-\varepsilon/8A))^{\sqrt{m-1}}|\gamma_n|^{m-1}}. 
 	\end{align*} 
 	and hence 
 	\begin{align*}
 		&\vartheta_{\gamma_n,\kappa} \pigl( \bigl\{ \gamma \in \Lambda^{\gamma_n} \colon |\Gamma_{\gamma,K}|>\varepsilon |\gamma_n|/2A \bigr\} \pigr)
 		\lesssim
 		\frac{1}{\varepsilon/2A}  \frac{ (2K)^m C_{\kappa} e^{-c_\kappa |\gamma_n|}}{(\varepsilon/8A(1-\varepsilon/8A))^{\sqrt{m-1}}|\gamma_n|^{m-1}}.
 	\end{align*} 
 	This completes the proof.
\end{proof}

Our next lemma gives an upper bound of the contribution to \( \vartheta_{\gamma_n,\kappa} \) by \( \gamma \in \Lambda^{\gamma_n} \) that are very close to \( \gamma_n. \)

\begin{lemma}\label{lemma: upper bound on intersection}
	Let \( \alpha \in (0,1), \) \( \kappa < \kappa^{\text{(free)}}_0(\alpha) ,\) and \( 0<\varepsilon <\delta<1. \) Then
	\begin{align*}
 		& \vartheta_{\gamma_n,\kappa} \Bigl( \pigl\{ \gamma \in  \Lambda^{\gamma_n}\colon |\gamma\cap \gamma_n | > \delta|\gamma_n| \pigr\} \Bigr) 
 		\lesssim 
 		\frac{1-\varepsilon}{\delta-\varepsilon }  \cdot \frac{C_{\kappa} e^{-c_\kappa |\gamma_n|}}{(\varepsilon(1-\varepsilon))^{\sqrt{m-1}}|\gamma_n|^{m-1}} .
 	\end{align*}
\end{lemma}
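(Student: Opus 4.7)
\textbf{The plan is} to combine a Markov-type inequality at the shifted threshold $\varepsilon|\gamma_n|$ with the vertex-visit bound from Lemma~\ref{lemma: line intersection 2} applied to the ``middle'' edges of $\gamma_n$. A direct union bound over middle vertices would not suffice, because when $\delta$ is only slightly larger than $\varepsilon$ the shared portion $\gamma\cap\gamma_n$ could in principle sit essentially near $\partial\gamma_n$, so no fixed vertex far from the endpoints is forced to lie on $\gamma$. The shifted Markov step absorbs this boundary contribution into the threshold and leaves only a ``middle excess'' to be estimated.

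Concretely, on the event $\{|\gamma\cap\gamma_n|>\delta|\gamma_n|\}$ one has $|\gamma\cap\gamma_n|-\varepsilon|\gamma_n|>(\delta-\varepsilon)|\gamma_n|$, so
\begin{equation*}
(\delta-\varepsilon)|\gamma_n|\cdot \vartheta_{\gamma_n,\kappa}\bigl(\bigl\{|\gamma\cap\gamma_n|>\delta|\gamma_n|\bigr\}\bigr)\leq \int\bigl(|\gamma\cap\gamma_n|-\varepsilon|\gamma_n|\bigr)_{+}\,d\vartheta_{\gamma_n,\kappa}(\gamma).
\end{equation*}
Let $\gamma_n^{\ast}\subseteq \gamma_n$ denote the sub-chain of edges whose two endpoints both satisfy $\dist(\cdot,\partial\gamma_n)\geq \varepsilon|\gamma_n|/2$. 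Then $|\gamma_n\setminus\gamma_n^{\ast}|\leq \varepsilon|\gamma_n|$ and $|\gamma_n^{\ast}|\leq(1-\varepsilon)|\gamma_n|$ (both up to an $O(1)$ additive error that is absorbed in $\lesssim$), from which one obtains the key inequality $(|\gamma\cap\gamma_n|-\varepsilon|\gamma_n|)_{+}\leq |\gamma\cap\gamma_n^{\ast}|$. Fubini then yields
\begin{equation*}
\int|\gamma\cap\gamma_n^{\ast}|\,d\vartheta_{\gamma_n,\kappa}(\gamma)=\sum_{e\in\gamma_n^{\ast}}\vartheta_{\gamma_n,\kappa}\bigl(\{\gamma\in\Lambda^{\gamma_n}\colon e\in\gamma\}\bigr)\leq \sum_{e\in\gamma_n^{\ast}}\vartheta_{\gamma_n,\kappa}\bigl(\{v_e\in\gamma\}\bigr),
\end{equation*}
where $v_e$ is an arbitrary endpoint of $e$; by construction $\dist(v_e,\partial\gamma_n)\geq \varepsilon|\gamma_n|/2$, so Lemma~\ref{lemma: line intersection 2} bounds each summand by a constant multiple of $C_\kappa e^{-c_\kappa|\gamma_n|}/\bigl[(\varepsilon(1-\varepsilon))^{\sqrt{m-1}}|\gamma_n|^{m-1}\bigr]$.

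Putting the pieces together and dividing by $(\delta-\varepsilon)|\gamma_n|$ gives
\begin{equation*}
\vartheta_{\gamma_n,\kappa}\bigl(\bigl\{|\gamma\cap\gamma_n|>\delta|\gamma_n|\bigr\}\bigr)\lesssim \frac{1-\varepsilon}{\delta-\varepsilon}\cdot \frac{C_\kappa e^{-c_\kappa|\gamma_n|}}{(\varepsilon(1-\varepsilon))^{\sqrt{m-1}}|\gamma_n|^{m-1}},
\end{equation*}
which is the claimed bound. The only genuinely nontrivial point is the Markov shift by $\varepsilon|\gamma_n|$, which is what makes it possible to reduce the problem to a sum over edges whose endpoints are far from $\partial\gamma_n$; once that reduction is in place, Lemma~\ref{lemma: line intersection 2} delivers the desired exponential-with-polynomial-correction estimate edge by edge.
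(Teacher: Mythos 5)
Your proof is correct and follows essentially the same route as the paper: restrict to the middle portion of \( \gamma_n \) (edges at distance at least \( \varepsilon|\gamma_n|/2 \) from the endpoints), apply Markov's inequality at the shifted threshold \( (\delta-\varepsilon)|\gamma_n| \), reduce via Fubini to per-vertex visiting masses, and invoke Lemma~\ref{lemma: line intersection 2}. The only difference is cosmetic ordering — the paper first replaces the event \( \{|\gamma\cap\gamma_n|>\delta|\gamma_n|\} \) by \( \{|\gamma\cap\gamma_n'|>(\delta-\varepsilon)|\gamma_n|\} \) and then applies Markov, whereas you apply Markov to the truncated excess directly — and the two computations coincide.
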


\begin{proof}
	Let \( \gamma_n' \) be the restriction of \( \gamma_n \) to the set of edges on distance at least \( \varepsilon|\gamma_n|/2 \) to the endpoints of \( \gamma_n. \)
	Then
	\begin{align*} 
		\vartheta_{\gamma_n,\kappa}\pigl( \bigl\{ \gamma \in  \Lambda^{\gamma_n}\colon |\gamma\cap \gamma_n | > \delta |\gamma_n| \bigr\} \pigr) 
		\leq  
		\vartheta_{\gamma_n,\kappa}\pigl( \bigl\{ \gamma \in  \Lambda^{\gamma_n}\colon |\gamma\cap \gamma'_n | > (\delta-\varepsilon) |\gamma_n| \bigr\} \pigr) .
	\end{align*}
	Since \( \vartheta_{\gamma_n,\kappa} \) is a positive and finite measure, we can define a probability measure \( P_{\vartheta_{\gamma_n,\kappa}} \) on \( \Lambda^{\gamma_n} \) by for \( \gamma \in \Lambda^{\gamma_n} \) letting \(  P_{\vartheta_{\gamma_n,\kappa}}(\gamma) \coloneqq \vartheta_{\gamma_n,\kappa}(\gamma)/\vartheta_{\gamma_n,\kappa}( \Lambda^{\gamma_n}).\) Using this notation, and applying Markov's inequality, we obtain 
	\begin{align*}
		&
		\vartheta_{\gamma_n,\kappa}\pigl( \bigl\{ \gamma \in  \Lambda^{\gamma_n}\colon |\gamma\cap \gamma_n' | > (\delta-\varepsilon) |\gamma_n| \bigr\} \pigr) 
		=
		\vartheta_{\gamma_n,\kappa}(\Lambda^{\gamma_n})P_{\vartheta_{\gamma_n,\kappa}} \bigl( |\gamma\cap \gamma_n' | > (\delta-\varepsilon) |\gamma_n|   \bigr) 
		\\&\qquad\leq
		\frac{1}{(\delta-\varepsilon) |\gamma_n|} \vartheta_{\gamma_n,\kappa}(\Lambda^{\gamma_n})\mathbb{E}_{\vartheta_{\gamma_n,\kappa}} \bigl[  |\gamma\cap \gamma_n' |   \bigr]
		\leq
		\frac{1}{(\delta-\varepsilon) |\gamma_n|} \vartheta_{\gamma_n,\kappa}(\Lambda^{\gamma_n})   \sum_{v \in \gamma_n'} P_{\vartheta_{\gamma_n,\kappa}} ( v \in \gamma) 	
		\\&\qquad=
		\frac{1}{(\delta-\varepsilon) |\gamma_n|} \sum_{v \in \gamma_n'} \vartheta_{\gamma_n,\kappa} \bigl(\{ \gamma \in \Lambda^{\gamma_n} \colon  v \in \gamma \} \bigr) .	
	\end{align*}
	Applying Lemma~\ref{lemma: line intersection 2}, we obtain
	\begin{align*}
		&\frac{1}{(\delta-\varepsilon) |\gamma_n|}  \sum_{v \in \gamma_n'} \vartheta_{\gamma_n,\kappa} \bigl(\{ \gamma \in \Lambda^{\gamma_n} \colon  v \in \gamma \} \bigr) 
		\lesssim 
		\frac{|\gamma_n'|}{(\delta-\varepsilon) |\gamma_n|}  \cdot \frac{C_{\kappa} e^{-c_\kappa |\gamma_n|}}{(\varepsilon(1-\varepsilon))^{\sqrt{m-1}}|\gamma_n|^{m-1}}  
		\\&\qquad=
		\frac{1-\varepsilon}{\delta-\varepsilon }  \cdot \frac{C_{\kappa} e^{-c_\kappa |\gamma_n|}}{(\varepsilon(1-\varepsilon))^{\sqrt{m-1}}|\gamma_n|^{m-1}}  .
	\end{align*}
	This concludes the proof. 
\end{proof}

The next lemma essentially gives a bound on the typical size of \( \gamma \sim \vartheta_{\gamma_n,\kappa}. \)

\begin{lemma}\label{lemma: lower bound}
	Let \( \alpha \in (0,1) \),  \( \kappa < \kappa_0^{\text{(free)}}(\alpha) \),  and \( a>0. \) Then there is a constant \( K_\kappa>0 \) such that 
	\begin{equation*}
		\liminf_{n \to \infty} \vartheta_{\gamma_n,\kappa}(\Lambda^{\gamma_n})^{-1} \sum_{\gamma \in \Lambda^{\gamma_n}}   e^{ 
		-a\xi_{\beta}|\gamma|    } \vartheta_{\gamma_n,\kappa}(\gamma) \geq  e^{ 
		-a\xi_\beta K_\kappa |\gamma_n|}.
	\end{equation*}
\end{lemma}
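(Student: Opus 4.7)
The plan is to reduce the statement to a linear upper bound on the mean length of $\gamma$ under the normalized measure, and then apply Jensen's inequality. Let $Z_n \coloneqq \vartheta_{\gamma_n,\kappa}(\Lambda^{\gamma_n})$ and let $P_n$ be the probability measure on $\Lambda^{\gamma_n}$ obtained by normalizing $\vartheta_{\gamma_n,\kappa}$. The expression on the left of the claimed inequality is exactly $\mathbb{E}_{P_n}[e^{-a\xi_\beta|\gamma|}]$, and since $x \mapsto e^{-a\xi_\beta x}$ is convex, Jensen's inequality gives
\[
\mathbb{E}_{P_n}\bigl[e^{-a\xi_\beta|\gamma|}\bigr] \geq e^{-a\xi_\beta\, \mathbb{E}_{P_n}[|\gamma|]}.
\]
It therefore suffices to produce a constant $K_\kappa > 0$ such that $\mathbb{E}_{P_n}[|\gamma|] \leq K_\kappa |\gamma_n|$ for all sufficiently large $n$.

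To bound $\mathbb{E}_{P_n}[|\gamma|]$, I would split the unnormalized sum $\sum_\gamma |\gamma|\,\vartheta_{\gamma_n,\kappa}(\gamma)$ according to whether $|\gamma| \leq K_1|\gamma_n|$ or $|\gamma| > K_1|\gamma_n|$, for a constant $K_1$ to be chosen. The short-path contribution is bounded trivially by $K_1|\gamma_n|\cdot Z_n$. For the long-path contribution, Lemma~\ref{lemma: large gamma0 2} gives
\[
\sum_{\gamma \in \Lambda^{\gamma_n}\colon |\gamma| > L} \vartheta_{\gamma_n,\kappa}(\gamma) \leq \frac{r_\kappa^L}{1-r_\kappa}, \qquad r_\kappa \coloneqq 2m\,(\tanh 2\kappa)\,e^{2(\tanh 2\kappa)^{1-\alpha}},
\]
and Abel summation on the tail then yields
\[
\sum_{\gamma\colon |\gamma| > K_1|\gamma_n|} |\gamma|\,\vartheta_{\gamma_n,\kappa}(\gamma) \lesssim \frac{K_1|\gamma_n|\,r_\kappa^{K_1|\gamma_n|}}{(1-r_\kappa)^2}.
\]
Because $\kappa < \kappa_0^{\text{(free)}}(\alpha)$ is taken small enough that $r_\kappa < 1$, this tail decays in $|\gamma_n|$ at exponential rate $K_1\log(1/r_\kappa)$, whereas the Ornstein-Zernike asymptotic~\eqref{eq: ising decay measure} guarantees that $Z_n$ itself decays only at rate $c_\kappa$.

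The final step is to choose $K_1$ large enough that $K_1\log(1/r_\kappa) > c_\kappa$, which is possible because for fixed $\kappa$ both quantities are finite positive constants. With such a choice, the long-path contribution is asymptotically negligible compared to $|\gamma_n|\,Z_n$, and hence $\mathbb{E}_{P_n}[|\gamma|] \leq (K_1+1)|\gamma_n|$ for all $n$ large. Setting $K_\kappa \coloneqq K_1+1$, with for instance $K_1 \coloneqq \lceil c_\kappa/\log(1/r_\kappa)\rceil + 1$, then completes the argument. The only delicate point is this comparison of exponential rates; in the small-$\kappa$ regime one has $c_\kappa \sim -\log(2m\tanh 2\kappa) \sim \log(1/r_\kappa)$, so the required $K_1$ can in fact be taken arbitrarily close to $1$, but the uniform choice above is cleaner and is the main (albeit not deep) obstacle to navigate.
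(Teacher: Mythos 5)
Your proof is correct and follows essentially the same route as the paper: both arguments truncate at a scale $K|\gamma_n|$, control the long-path tail via Lemma~\ref{lemma: large gamma0 2}, and compare the resulting exponential rate against the Ornstein--Zernike asymptotics~\eqref{eq: ising decay measure} for $\vartheta_{\gamma_n,\kappa}(\Lambda^{\gamma_n})$ by taking $K$ large enough that $K\log(1/r_\kappa) > c_\kappa$. The only difference is cosmetic: you package the truncation through Jensen's inequality and a first-moment bound $\mathbb{E}_{P_n}[|\gamma|]\leq K_\kappa|\gamma_n|$, whereas the paper simply lower-bounds $e^{-a\xi_\beta|\gamma|}$ by $e^{-a\xi_\beta K|\gamma_n|}$ on the event $\{|\gamma|<K|\gamma_n|\}$ and discards the rest.
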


\begin{proof}
	Let \( K \geq 1. \) Since \( \vartheta_{\gamma_n,\kappa} \) is a positive measure, we have
	\begin{align*}
		&\sum_{\gamma \in \Lambda^{\gamma_n}}   e^{ 
		-a\xi_\beta|\gamma|    } \vartheta_{\gamma_n,\kappa}(\gamma)
		\geq 
		e^{ 
		-a\xi_\beta K |\gamma_n|    }  \vartheta_{\gamma_n,\kappa} \pigl( \bigl\{ \gamma \in \Lambda^{\gamma_n} \colon |\gamma|  < K |\gamma_n| \bigr\} \pigr)
		\\&\qquad= 
		e^{ 
		-a\xi_\beta K |\gamma_n|    }  \Bigl( \vartheta_{\gamma_n,\kappa} ( \Lambda^{\gamma_n}  )-\vartheta_{\gamma_n,\kappa} \pigl( \bigl\{ \gamma \in \Lambda^{\gamma_n} \colon |\gamma| \geq K |\gamma_n| \bigr\} \pigr) \Bigr).
	\end{align*} 
	By Lemma~\ref{lemma: large gamma0 2}, we have
	\begin{align*}
		\vartheta_{\gamma_n,\kappa} \pigl( \bigl\{ \gamma \in \Lambda^{\gamma_n} \colon |\gamma|  \geq K|\gamma_n| \bigr\} \pigr) 
		\leq 
		\frac{\bigl( 2m (\tanh 2\kappa )  e^{ 
	 	2 (\tanh 2\kappa)^{1-\alpha}}  \bigr)^{K|\gamma_n|}}{1-2m(\tanh 2\kappa )  e^{ 
	 	2 (\tanh 2\kappa)^{1-\alpha}} } .
	\end{align*} 
	Since \( \vartheta_{\gamma_n,\kappa}(\Lambda^{\gamma_n}) \sim \frac{C_\kappa e^{-c_\kappa |\gamma_n|}}{|\gamma_n|^{\sqrt{m-1}}}, \) we can choose \( K \) large enough to ensure that 
	\begin{equation*}
		\bigl( 2m(\tanh 2\kappa )  e^{ 
	 	2 (\tanh 2\kappa)^{1-\alpha}} \bigr)^{K} < e^{-c_\kappa},
	\end{equation*}
	and hence such that 
	\begin{equation*}
		\limsup_{n \to \infty}\vartheta_{\gamma_n,\kappa} ( \Lambda^{\gamma_n})^{-1}\vartheta_{\gamma_n,\kappa} \pigl( \bigl\{ \gamma \in \Lambda^{\gamma_n} \colon |\gamma|  \geq K|\gamma_n| \bigr\} \pigr)  =0 .
	\end{equation*}
	From this the desired conclusion immediately follows.
\end{proof}

 Our final lemma of this section gives an upper bound to the contribution of large \( \gamma \in \Lambda^{\gamma_n} \) in the measure \( \vartheta_{\gamma_n,\kappa}(\Lambda^{\gamma_n}). \)

\begin{lemma}\label{lemma: last decay}
	Let \( \alpha \in (0,1),\) \( \kappa < \kappa_0^{\text{(free)}}(\alpha)\), and \( \delta >0, \) and assume that \( {\limsup_{n \to \infty}\xi_{\beta_n} |\gamma_n| < \infty.}\) Then
	\begin{align*}
		\lim_{n \to \infty}\vartheta_{\gamma_n,\kappa}(\Lambda^{\gamma_n})^{-1} \sum_{\gamma \in \Lambda^{\gamma_n}} \bigl(e^{ \xi_{\beta_n}^{1+\delta} |\gamma|}-1 \bigr)\vartheta_{\gamma_n,\kappa}(\gamma) = 0.
	\end{align*}
\end{lemma}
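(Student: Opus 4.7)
The plan is to split the sum at the threshold $|\gamma|=K|\gamma_n|$ for a large constant $K$ depending only on $\kappa$, and to treat the two regimes separately. Since $\gamma_n$ has length $n$, one has $|\gamma_n|\to\infty$; combined with $\limsup_{n}\xi_{\beta_n}|\gamma_n|<\infty$ this forces $\xi_{\beta_n}\to 0$, and therefore
\[
\xi_{\beta_n}^{1+\delta}|\gamma_n|=\xi_{\beta_n}^{\delta}\cdot\bigl(\xi_{\beta_n}|\gamma_n|\bigr)\longrightarrow 0.
\]
Write $a_n:=\xi_{\beta_n}^{1+\delta}$, so that $a_n|\gamma_n|\to 0$.

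For short paths $|\gamma|\leq K|\gamma_n|$, the factor $e^{a_n|\gamma|}-1$ is uniformly bounded by $e^{Ka_n|\gamma_n|}-1\to 0$, giving
\[
\sum_{\substack{\gamma\in\Lambda^{\gamma_n}\\ |\gamma|\leq K|\gamma_n|}}\bigl(e^{a_n|\gamma|}-1\bigr)\vartheta_{\gamma_n,\kappa}(\gamma)\;\leq\;\bigl(e^{Ka_n|\gamma_n|}-1\bigr)\vartheta_{\gamma_n,\kappa}(\Lambda^{\gamma_n}),
\]
which, after dividing by $\vartheta_{\gamma_n,\kappa}(\Lambda^{\gamma_n})$, vanishes in the limit. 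For long paths $|\gamma|>K|\gamma_n|$, set $r:=2m(\tanh 2\kappa)e^{2(\tanh 2\kappa)^{1-\alpha}}$; we take $\kappa$ small enough that $r<1$, which is the same regime implicit in Lemma~\ref{lemma: lower bound}. Lemma~\ref{lemma: large gamma0 2} then gives $\vartheta_{\gamma_n,\kappa}(\{|\gamma|\geq k\})\lesssim r^k$. Bounding $\vartheta_{\gamma_n,\kappa}(\{|\gamma|=k\})$ by the tail and summing the resulting geometric series, we get, for $n$ large enough that $re^{a_n}<1$,
\[
\sum_{\substack{\gamma\in\Lambda^{\gamma_n}\\ |\gamma|>K|\gamma_n|}}e^{a_n|\gamma|}\vartheta_{\gamma_n,\kappa}(\gamma)\;\lesssim\;\frac{(re^{a_n})^{\lceil K|\gamma_n|\rceil}}{1-re^{a_n}}.
\]

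Using~\eqref{eq: ising decay measure} in the form $\vartheta_{\gamma_n,\kappa}(\Lambda^{\gamma_n})\sim C_\kappa|\gamma_n|^{-\sqrt{m-1}}e^{-c_\kappa|\gamma_n|}$, the ratio of the long-path contribution to $\vartheta_{\gamma_n,\kappa}(\Lambda^{\gamma_n})$ is of order $|\gamma_n|^{\sqrt{m-1}}e^{(K\log r+c_\kappa+o(1))|\gamma_n|}$, which tends to $0$ as soon as $K$ is chosen with $K\log(1/r)>c_\kappa$. Combining this with the vanishing short-path contribution proves the lemma. The only genuinely delicate step is matching rates in the long-path regime: $K$ must be picked large enough, uniformly in $n$, to overcome the exponential factor $e^{-c_\kappa|\gamma_n|}$ in the denominator. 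This is possible precisely because $r<1$ is fixed and $n$-independent, while the only $n$-dependence left in the geometric base is the harmless factor $e^{a_n}\to 1$.
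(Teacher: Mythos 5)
Your proof is correct, and it rests on the same two ingredients as the paper's own argument: the exponential tail bound of Lemma~\ref{lemma: large gamma0 2} with ratio \( r = 2m(\tanh 2\kappa)e^{2(\tanh 2\kappa)^{1-\alpha}} < 1 \), and the Ornstein--Zernike asymptotics~\eqref{eq: ising decay measure} for the normalizing mass. The decomposition is genuinely different, though. The paper performs a layer-cake sum over the level sets \( \{\xi_{\beta_n}^{1+\delta}|\gamma| \geq j\varepsilon\} \), bounds the \( j \geq 1 \) terms by a geometric series whose leading term \( C^{\varepsilon/\xi_{\beta_n}^{1+\delta}} \) is shown (using \( \delta > 0 \) and the boundedness of \( \xi_{\beta_n}|\gamma_n| \)) to be \( \ll e^{-a|\gamma_n|} \) for every \( a > 0 \), and then sends \( \varepsilon \to 0 \) to dispose of the residual \( (e^{\varepsilon}-1) \) from the \( j = 0 \) layer. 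You instead make a single split at \( |\gamma| = K|\gamma_n| \) and observe up front that \( \xi_{\beta_n}^{1+\delta}|\gamma_n| = \xi_{\beta_n}^{\delta}\cdot(\xi_{\beta_n}|\gamma_n|) \to 0 \), which makes the short-path contribution vanish outright with no auxiliary \( \varepsilon \); the long-path contribution is then overwhelmed by choosing \( K \) with \( K\log(1/r) > c_\kappa \), which is exactly the rate comparison the paper makes implicitly when matching \( C^{\varepsilon/\xi_{\beta_n}^{1+\delta}} \) against \( e^{-c_\kappa|\gamma_n|}|\gamma_n|^{-\sqrt{m-1}} \). Your version is slightly cleaner --- it avoids the final \( \varepsilon \to 0 \) limit and makes explicit where \( \delta > 0 \) and \( |\gamma_n| \to \infty \) enter --- while using no information beyond what the paper's proof already requires; both arguments share the same implicit smallness assumption on \( \kappa \) guaranteeing \( r < 1 \).
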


\begin{proof}
	Let \( \varepsilon>0. \) Then
	\begin{align*}
		\sum_{\gamma \in \Lambda^{\gamma_n}} \bigl(e^{ \xi_{\beta_n}^{1+\delta} |\gamma|}-1 \bigr)\vartheta_{\gamma_n,\kappa}(\gamma)
		\leq 
		\sum_{j=0}^\infty (e^{(j+1)\varepsilon}-1) \vartheta_{\gamma_n,\kappa} \pigl( \bigl\{ \gamma \in \Lambda^{\gamma_n} \colon  \xi_{\beta_n}^{1+\delta} |\gamma|  \geq j \varepsilon \bigr\} \pigr).   
	\end{align*} 
	Now let
	\begin{equation*}
		C \coloneqq 2 m  (\tanh 2\kappa )  e^{ 
	 	2 (\tanh 2\kappa)^{1-\alpha}} .
	\end{equation*}
	Since \( C < 1 \) by assumption,   for all sufficiently large \( \beta \) we have  \( e^{\varepsilon} C^{\varepsilon/(\xi_{\beta_n}^{1+\delta}) }<1 \). Using Lemma~\ref{lemma: large gamma0 2}, we can thus upper bound the previous expression by
	\begin{align*}
		&\sum_{j=0}^\infty (e^{(j+1)\varepsilon}-1) \vartheta_{\gamma_n,\kappa} \pigl( \bigl\{ \gamma \in \Lambda^{\gamma_n} \colon \xi_{\beta_n}^{1+\delta} |\gamma|  \geq j \varepsilon \bigr\} \pigr) 
		\leq 
		(e^\varepsilon-1)\vartheta_{\gamma_n,\kappa}(\Lambda^{\gamma_n}) + \sum_{j=1}^\infty e^{j\varepsilon} \frac{C^{j\varepsilon/(  \xi_{\beta_n}^{1+\delta}) }}{1-C}.
		\\&\qquad= 
		(e^\varepsilon-1)\vartheta_{\gamma_n,\kappa}(\Lambda^{\gamma_n}) + \frac{e^{\varepsilon} C^{\varepsilon/( \xi_{\beta_n}^{1+\delta}) }}{(1-C)(1-e^{\varepsilon} C^{\varepsilon/( \xi_{\beta_n}^{1+\delta}) })}.
	\end{align*}
	Since \( \limsup_{n \to \infty} \xi_{\beta_n} |\gamma_n| < \infty, \) there is \( D>0 \) such that \( \xi_{\beta_n} |\gamma_n| <D \) for all \( n \geq 1, \) and hence \( C^{\varepsilon/ \xi_{\beta_n}^{1+\delta}} 
		\leq C^{\varepsilon D^{-1}|\gamma_n|/ \xi_{\beta_n}^{\delta}} . \) Since \( \delta>0, \) for any \( a>0 \) and all sufficiently large \( \beta_n, \) we thus have \( C^{\varepsilon/ \xi_{\beta_n}^{1+\delta}}  \ll e^{-a |\gamma_n|}.  \) Since \( \varepsilon \) can be taken arbitrarily small, using~\eqref{eq: ising decay measure}, we immediately obtain the desired conclusion.
\end{proof}

\section{Proof of Theorem~\ref{theorem: dilute free phase}}
\label{section: proof of main result}

In this section, we give a proof of our main result, Theorem~\ref{theorem: dilute free phase}.

The main idea of the proof is as follows. We first recall first from Section~\ref{section: high temperature} and Section~\ref{section: cluster expansion} that we can write 
	\begin{equation}\label{eq: step 1} 
    	\frac{Z_{N,\beta,\kappa}^{(U)}[\gamma_n]}{Z_{N,\beta,\kappa}^{(U)}[0]} = 
    	\sum_{\gamma \in \Lambda^{\gamma_n}}  (\tanh 2\kappa)^{| \gamma|} 
    	e^{	 	\sum_{\mathcal{C} \in \Xi} \Psi_{\beta,\kappa}(\mathcal{C}) ( \rho(\mathcal{C}^2(q_{\gamma_n+\gamma})) \mathbf{1}(\mathcal{C}^1 \nsim \gamma)-1)}.
	\end{equation}
	and thus, we need to prove that 
	\begin{align*}
		\sum_{\gamma \in \Lambda^{\gamma_n}}  (\tanh 2\kappa)^{| \gamma|} 
    	e^{	 	\sum_{\mathcal{C} \in \Xi} \Psi_{\beta,\kappa}(\mathcal{C}) ( \rho(\mathcal{C}^2(q_{\gamma_n+\gamma})) \mathbf{1}(\mathcal{C}^1 \nsim \gamma)-1)}
    	\sim
		\frac{C_{\beta,\kappa}e^{-c_{\beta,\kappa}|\gamma_n|}}{|\gamma_n|^{\sqrt{m-1}}},
	\end{align*}
	for some constants \( c_{\beta,\kappa} \) and \( C_{\beta,\kappa}. \)
	We will aim to choose the parameter \( \beta \) large enough compared to \( |\gamma_n| \) so that, with high probability, there are vortices interacting with the random path \( \gamma_n+\gamma, \) but at the same time small enough so that with high probability no non-minimal vortices interact with \( \gamma_n+\gamma. \) We then show that under this assumption on \( \beta, \) we can compare the expression we get with that of the Ising model.

The proof of Theorem~\ref{theorem: dilute free phase} will be divided into several subsections. 

\subsection{A key lemma for the Ursell function}

The Ursell function captures the interaction between polymers in the cluster expansion. The following lemma, which is a key lemma in the proof of Theorem~\ref{theorem: dilute free phase}, allows us to factor out the contribution of the vortex polymers in the case that these are all minimal vortices with non-overlapping support.
\begin{lemma}\label{lemma: minimal ursell}
	For \( k\geq 1 \) and \( k' \geq 0, \)  let  \( \mathcal{C}  \in \Xi, \) and assume that all vortices in \( \mathcal{C}^2 \) are minimal vortices with \( \omega \nsim \omega' \) for all distinct \( \omega,\omega' \in \mathcal{C}^2. \) Then
	\begin{equation*}
		U (\mathcal{C} ) = U( \mathcal{C}^1) \cdot (-2)^{|\mathcal{C}^2|} \cdot \prod_{\omega \in \mathcal{C}^2} \deg \omega,
	\end{equation*}
	where, for \( \omega  \in \mathcal{C}^2, \) we let \( \deg \omega \coloneqq  | \{ \eta \in \mathcal{C}^1 \colon \eta \sim \omega \}|.  \)
\end{lemma}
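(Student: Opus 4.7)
My plan is to apply Möbius inversion on the partition lattice, using the formula
\[
U(\mathcal{C}) = \sum_{P}(-1)^{|P|-1}(|P|-1)!\prod_{B\in P}W(B),\qquad W(B)\coloneqq\prod_{e\in\binom{B}{2}}(1-\zeta(e)),
\]
which inverts the cluster decomposition $W(V)=\sum_{P}\prod_{B\in P}U(B)$. Under our hypotheses the relevant edge weights are $\zeta(\omega,\omega')=0$, $\zeta(\omega,\gamma)=2\,\mathbf{1}(\omega\sim\gamma)$ (since a minimal vortex $\omega=d\sigma$ has $\sigma$ supported on a single edge $e_\omega$, so $\omega(q_\gamma)=\sigma(\gamma)$), and $\zeta(\gamma,\gamma')=\mathbf{1}(\gamma\sim\gamma')$. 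Consequently
\[
W(B)=\mathbf{1}(B^1\text{ is independent})\cdot\prod_{\omega\in B^2}(-1)^{|A_\omega\cap B^1|},
\]
where $A_\omega\coloneqq\{\gamma\in\mathcal{C}^1\colon\omega\sim\gamma\}$ and $B^1,B^2$ denote the path- and vortex-parts of $B$; only partitions $P$ whose every block has an independent path-part contribute.

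Next I would parametrize such a $P$ by a triple $(P^1,a,P^2)$: a partition $P^1=(B^1_1,\ldots,B^1_r)$ of $\mathcal{C}^1$ into independent sets, an assignment $a\colon\mathcal{C}^2\to\{1,\ldots,r\}\sqcup\{*\}$ placing each vortex in a path-block or flagging it for a vortex-only block, and a partition of the $k'_*\coloneqq|a^{-1}(*)|$ flagged vortices into $s$ vortex-only blocks. Summing over $s$ using the Stirling-number identity
\[
\sum_{s=0}^{k'_*}S(k'_*,s)(-1)^{r+s-1}(r+s-1)!=(-1)^{r-1}(r-1)!\,(-r)^{k'_*},
\]
(proved by induction from the recursion $S(k'+1,s)=sS(k',s)+S(k',s-1)$), the remaining sum over $a$ factors over the vortices and yields
\[
U(\mathcal{C})=\sum_{P^1\text{ indep.}}(-1)^{r-1}(r-1)!\prod_{\omega\in\mathcal{C}^2}\Big(-r+\sum_{i=1}^{r}(-1)^{|A_\omega\cap B^1_i|}\Big).
\]

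The decisive ingredient is then the structural observation that $A_\omega$ is a \emph{clique} in the interaction graph on $\mathcal{C}^1$: any two paths in $A_\omega$ both contain the edge $e_\omega$ about which $\omega$ is centered, so they share vertices and hence interact. Consequently each independent block $B^1_i$ contains at most one element of $A_\omega$, the $\deg\omega$ elements of $A_\omega$ lie in $\deg\omega$ distinct blocks, and $|A_\omega\cap B^1_i|\in\{0,1\}$. The per-vortex factor therefore simplifies to $-r+(r-2\deg\omega)=-2\deg\omega$, and pulling $(-2)^{|\mathcal{C}^2|}\prod_\omega\deg\omega$ outside leaves the sum over independent partitions of $\mathcal{C}^1$, which is exactly $U(\mathcal{C}^1)$. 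The main obstacle I expect is the Stirling identity with its alternating signs; once it is in hand, the $A_\omega$-clique observation forces everything else.
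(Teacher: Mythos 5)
Your proof is correct, but it takes a genuinely different route from the paper. The paper works directly with the sum over connected graphs defining \( U(\mathcal{C}) \): for a minimal vortex \( \omega \) with two path-neighbours \( \eta,\eta' \) in the graph, it toggles the edge \( (\eta,\eta') \) (which is present-able precisely because \( \eta\sim\eta' \), both containing the central edge of \( \omega \)), producing a sign-reversing involution that kills every graph in which some vortex has degree \( \geq 2 \); the surviving graphs have each vortex as a leaf, and peeling the leaves off yields the factor \( (-2)^{|\mathcal{C}^2|}\prod_\omega \deg\omega \) times the sum over connected graphs on \( \mathcal{C}^1 \). You instead pass to the partition-lattice (M\"obius inversion) representation of \( U \), reduce to partitions of \( \mathcal{C}^1 \) into independent sets, resum the vortex-only blocks via the Stirling identity \( \sum_s S(k,s)(-1)^{r+s-1}(r+s-1)! = (-1)^{r-1}(r-1)!(-r)^k \), and then evaluate the per-vortex factor \( -r+\sum_i(-1)^{|A_\omega\cap B_i^1|} = -2\deg\omega \). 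Both arguments ultimately rest on the same structural fact — the set \( A_\omega \) of paths meeting a minimal vortex is a clique, since all of them contain the edge about which \( \omega \) is centred — but they deploy it differently: in the paper it licenses the edge-toggling involution, in yours it forces \( |A_\omega\cap B_i^1|\in\{0,1\} \). The paper's involution is shorter and more elementary; your route is more systematic and yields an explicit intermediate formula (the display with the per-vortex factor) that would still be meaningful if \( A_\omega \) were not a clique, at the cost of importing the partition-lattice formula and the Stirling resummation. One small caveat common to both arguments: the identity as stated degenerates when \( \mathcal{C}^1=\emptyset \) (a single minimal vortex has \( U=1 \), not \( 0 \)), so like the paper you are implicitly assuming \( \mathcal{C}^1\neq\emptyset \), which is the only case in which the lemma is applied.
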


\begin{proof}
	Since all vortices in \( \mathcal{C}^2 \) are minimal vortices that do not interact, we have \( \mathcal{C}^1 \in \Xi. \) In other words, \( \mathcal{C} \) remains a cluster even if we remove all polymers in \( \mathcal{C}^2 \) from~\( \mathcal{C}. \)

	Now fix \( \omega \in \mathcal{C}^2, \) and recall that, by assumption, \( \omega  \) is a minimal vortex. Let \( e \in C_1(B_N) \) be such that \( (\support \omega )^+ =  \support \hat \partial e. \)
	Now, assume that \( \eta,\eta' \in \mathcal{C}^1 \) are such that \( e \in \support \eta \cap \support \eta'. \)

	\begin{figure}[H]\centering
	\begin{tikzpicture}
		\fill (0,0) circle (2pt) node[left] {\( \eta' \)};
		\fill (0,2) circle (2pt) node[left] {\( \eta \)};
		\fill (2,1) circle (2pt) node[right] {\( \omega \)}; 
		\draw (0,0) -- (2,1) -- (0,2) -- (0,0);

		\draw (5.5,1) node[] {\large \( \longleftrightarrow \)};
 		
		\begin{scope}[xshift=8cm]
		\fill (0,0) circle (2pt) node[left] {\( \eta' \)};
		\fill (0,2) circle (2pt) node[left] {\( \eta \)};
		\fill (2,1) circle (2pt) node[right] {\( \omega \)}; 
		\draw (0,0) -- (2,1) -- (0,2);
		\end{scope}

	\end{tikzpicture}
	\caption{The natural bijection between graphs \( G \in \mathcal{G}^{|\mathcal{C}|} \) such that \( (\eta,\omega),(\eta',\omega),(\eta,\eta') \in E(G) \) and graphs \( G' \in \mathcal{G}^{|\mathcal{C}|} \) with \(  (\eta,\omega),(\eta',\omega)  \in E(G') \) and \( (\eta,\eta') \notin E(G') \) in the proof of Lemma~\ref{lemma: minimal ursell}.}\label{figure: graph surgery}
	\end{figure}
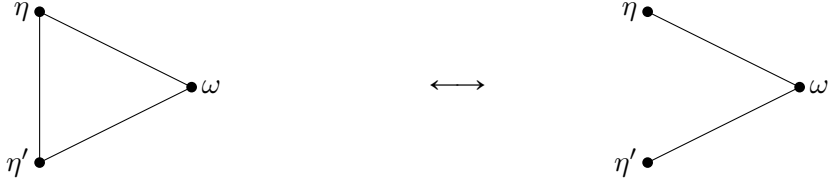
	
	Then there is a natural bijection between graphs \( G \in \mathcal{G}^{|\mathcal{C}|} \) such that \( {(\eta,\omega),(\eta',\omega),(\eta,\eta') \in E(G)} \) and graphs \( G' \in \mathcal{G}^{|\mathcal{C}|} \) with \(  (\eta,\omega),(\eta',\omega)  \in E(G') \) and \( (\eta,\eta') \notin E(G') \) (as if \(  (\eta,\omega),(\eta',\omega),(\eta,\eta')  \in E(G) \), then removing \( (\eta,\eta') \) from \( E(G) \) does not make \( G \) disconnected) (see Figure~\ref{figure: graph surgery}). Moreover, for any such pair \( G \) and \( G', \) we have
	\begin{equation*}
         (-1)^{|E(G)|} \prod_{(\eta_1,\eta_2) \in E(G)} \zeta( \eta_1 , \eta_2) =
         -   (-1)^{|E(G')|} \prod_{(\eta_1,\eta_2) \in E(G')} \zeta( \eta_1 , \eta_2) .
    \end{equation*}  
	Hence
    \begin{align*}
        &\sum_{\substack{G \in \mathcal{G}^{|\mathcal{C}|} \mathrlap{\colon}\\ (\eta,\omega),(\eta',\omega) \in E(G)}} \!\!\!\! (-1)^{|E(G )|} \prod_{(\eta_1,\eta_2) \in E(\mathcal{G})} \zeta( \eta_1 , \eta_2)   = 0.
    \end{align*} 
    This implies in particular that 
	\begin{align*}
        &U( \mathcal{C}) =  \sum_{G \in \mathcal{G}^{|\mathcal{C}|}} (-1)^{|E(G )|} \prod_{(\eta_1,\eta_2) \in E({G})} \zeta( \eta_1 , \eta_2)
         = \sum_{\substack{G \in \mathcal{G}^{|\mathcal{C}|} \mathrlap{\colon}\\ \deg(\omega) = 1\, \forall \omega \in \mathcal{C}^2}} \!\!\!\!\!\!\!\!(-1)^{|E(G )|} \prod_{\eta_1,\eta_2 \in E({G})} \zeta( \eta_1 , \eta_2). 
    \end{align*} 
    Noting that if \( \eta \in \mathcal{C}^1\) and \( \omega \in \mathcal{C}^2, \) then \( \zeta(\eta, \omega) = 2\mathbf{1}(\eta \sim \omega), \) the desired conclusion immediately follows. 
\end{proof}

\subsection{Error terms and upper bounds}

In this section, we define three error terms, \( E_1, \) \( E_2, \) and \( E_3, \)  which will appear in the proof of Theorem~\ref{theorem: dilute free phase}, and show that under the assumptions of this theorem, they are typically small.
To simplify notation in these lemmas, for \( m \geq 1, \) we define
    \begin{equation}\label{eq: def Dm}
		D_m \coloneqq \sup_{e \in C_1(B_N)}\sum_{\mathcal{C} \in \Xi^1_{e} } \bigl| \Psi_\kappa(\mathcal{C}) \bigr| \cdot\| \mathcal{C}\|^m  .
    \end{equation}
    Note that, under the assumption of Theorem~\ref{theorem: dilute free phase}, by Lemma~\ref{lemma: power cluster}, we have \( D_m < \infty ,\) and moreover, \( D_m \) can be made arvitrarily small by choosing \( \kappa \) small.

    We state and prove Lemmas~\ref{lemma: E1}--\ref{lemma: E4} under the assumptions of Theorem~\ref{theorem: dilute free phase}, without repeating those here. 

\begin{lemma}\label{lemma: E1}
	Let \( \gamma \in \Lambda^{\gamma_n}, \) and define  
	\begin{equation}
		E_1(\gamma) \coloneqq \sum_{\mathcal{C} \in \Xi \colon \| \mathcal{C} \|_2 \geq 4(m-1)-2} 
	 	\Psi_{\beta,\kappa}(\mathcal{C}) \pigl( \rho\bigl(\mathcal{C}^2(q_{\gamma_n+\gamma})\bigr) \mathbf{1}(\mathcal{C}^1 \nsim \gamma)-1\pigr).\label{eq: error 1}
	\end{equation} 
	Then 
	\begin{align*} 
	 	&|E_1(\gamma)| 
	 	\leq  
	 	2C(a,\alpha)  (|\gamma_n| + |\gamma|)    \hat \xi_{\beta}^{1-a},
	\end{align*}   
	where
	\begin{equation*}
		C(a,\alpha) \coloneqq \sum_{\substack{\mathcal{C}\in \Xi_e  }} (\tanh 2\kappa)^{a(1-\alpha)}.
	\end{equation*}
\end{lemma}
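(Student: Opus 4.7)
The starting point is the pointwise bound $|\rho(\mathcal{C}^2(q_{\gamma_n+\gamma}))\mathbf{1}(\mathcal{C}^1 \nsim \gamma) - 1| \leq 2$, together with the observation that this factor vanishes identically unless either (i) some polymer of $\mathcal{C}^1$ shares a vertex with $\gamma$, or (ii) $\mathcal{C}^2(q_{\gamma_n+\gamma}) \neq 0 \pmod{2}$. Applying the triangle inequality to the sum in~\eqref{eq: error 1} therefore reduces the task to controlling $\sum_{\mathcal{C}}|\Psi_{\beta,\kappa}(\mathcal{C})|$ over clusters with $\|\mathcal{C}\|_2 \geq 4(m-1)-2$ satisfying (i) or (ii).

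The key step is to assign to each such $\mathcal{C}$ a \emph{witness edge} $e \in \support\gamma_n \cup \support\gamma$ at which the cluster is anchored. For (i) this is immediate: take an edge of $\gamma$ sharing a vertex with an interacting polymer of $\mathcal{C}^1$. For (ii) one exploits that $B_N$ is contractible, so that every closed vortex appearing in the expansion can be written $\omega = d\sigma_\omega$ for some $\sigma_\omega \in \Omega^1(B_N, \mathbb{Z}_2)$. The discrete Stokes identity then yields
\[
	\omega(q_{\gamma_n+\gamma}) = d\sigma_\omega(q_{\gamma_n+\gamma}) = \sigma_\omega(\partial q_{\gamma_n+\gamma}) = \sigma_\omega(\gamma_n+\gamma),
\]
so the pairing can only be non-zero if some edge of $\support \sigma_\omega$ lies in $\support(\gamma_n+\gamma) \subseteq \support\gamma_n \cup \support\gamma$, and any such edge serves as the witness.

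With this bookkeeping (over-counting clusters that have several witnesses only weakens the upper bound, which is fine),
\[
	|E_1(\gamma)| \leq 2 \sum_{e \in \support\gamma_n \cup \support\gamma}\; \sum_{\substack{\mathcal{C} \in \Xi_e \\ \|\mathcal{C}\|_2 \geq 4(m-1)-2}} |\Psi_{\beta,\kappa}(\mathcal{C})|.
\]
Each inner sum is bounded by Lemma~\ref{lemma: smallest nonminimal}---the condition $\|\mathcal{C}\|_2 > 2(m-1)$ there is equivalent to $\|\mathcal{C}\|_2 \geq 4(m-1)-2$, as no vortex of intermediate size exists---giving $\hat\xi_\beta^{1-a} C(a,\alpha)$ uniformly in $e$. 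Using $|\support\gamma_n \cup \support\gamma| \leq |\gamma_n| + |\gamma|$ yields the claimed bound $2 C(a,\alpha)(|\gamma_n|+|\gamma|)\hat\xi_\beta^{1-a}$.

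The main obstacle is case (ii): a priori the support of $q_{\gamma_n+\gamma}$ can be much larger than $|\gamma_n|+|\gamma|$, so a naive plaquette-based enumeration would fail to produce a bound linear in $|\gamma_n|+|\gamma|$. The Stokes reduction above is what shifts the counting from the interior of the surface to its boundary, and it relies crucially on the closedness of the vortices that appear in the expansion together with the triviality of the ambient topology of $B_N$.
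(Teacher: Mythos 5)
Your proposal is correct and follows essentially the same route as the paper: bound the bracket by \(2\cdot\mathbf{1}(\mathcal{C}\text{ interacts with }\gamma_n+\gamma)\), anchor each such cluster at an edge of \(\gamma_n+\gamma\) (so the sum costs a factor \(|\gamma_n|+|\gamma|\)), and invoke Lemma~\ref{lemma: smallest nonminimal}. The only difference is that you explicitly justify, via the Stokes identity \(\omega(q_{\gamma_n+\gamma})=\sigma_\omega(\gamma_n+\gamma)\), why a non-interacting cluster contributes zero — a step the paper's proof leaves implicit in the indicator \(\mathbf{1}(\mathcal{C}\sim\gamma_n+\gamma)\).
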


\begin{proof}
	We have
	\begin{align*} 
	 	&|E_1(\gamma)| 
	 	\leq  \sum_{\mathcal{C} \in \Xi \colon \| \mathcal{C} \|_2 \geq 4(m-1)-2} 
	 	\bigl| \Psi_{\beta,\kappa}(\mathcal{C}) \bigr| \cdot  \pigl| \rho \bigl(\mathcal{C}^2(q_{\gamma_n+\gamma})\bigr) \mathbf{1}(\mathcal{C}^1 \nsim \gamma)-1\pigr|
	 	\\&\qquad\leq 
	 	2\sum_{\mathcal{C} \in \Xi \colon \| \mathcal{C} \|_2 \geq 4(m-1)-2  } 
	 	\bigl| \Psi_{\beta,\kappa}(\mathcal{C}) \bigr| \cdot \mathbf{1}(\mathcal{C} \sim \gamma_n+ \gamma) 
	 	\\&\qquad\leq 
	 	2\bigl(|\gamma_n|+|\gamma|\bigr) \sup_{e \in C_1(B_N)} \sum_{\mathcal{C} \in \Xi_{e} \colon \| \mathcal{C} \|_2 \geq 4(m-1)-2  } 
	 	\bigl| \Psi_{\beta,\kappa}(\mathcal{C}) \bigr| .
    \end{align*}  
    Using Lemma~\ref{lemma: smallest nonminimal}, the desired conclusion immediately follows.
\end{proof}

\begin{lemma}\label{lemma: E2}
	 Let \( \gamma \in \Lambda^{\gamma_n}, \) and define
	\begin{equation*}
		E_2(\gamma) \coloneqq 4 \xi_{\beta_n} |\support \gamma_n \cap \support \gamma| .\label{eq: error 2}
	\end{equation*} 
	Then  
	\begin{align*}
 		&\lim_{n \to \infty} \vartheta_{\gamma_n,\kappa}(\Lambda_n)^{-1}\sum_{\gamma \in \Lambda_{\gamma_n}} \bigl(e^{3|E_2(\gamma)|}-1 \bigr) \, \vartheta_{\gamma_n,\kappa}(\gamma)
 		= 0.
 	\end{align*} 
\end{lemma}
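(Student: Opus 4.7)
The plan is to exploit the assumption $\limsup_{n \to \infty} \xi_{\beta_n}|\gamma_n| =: D < \infty$ coming from the theorem's hypothesis, which forces the uniform bound
\begin{equation*}
3|E_2(\gamma)| = 12 \, \xi_{\beta_n} \, |\support \gamma_n \cap \support \gamma| \leq 12 \, \xi_{\beta_n} |\gamma_n| \leq 12 D
\end{equation*}
for all $\gamma \in \Lambda^{\gamma_n}$ and all $n$ large. Combined with the elementary inequality $e^x - 1 \leq x e^x$ valid for $x \geq 0$, this upgrades to
\begin{equation*}
e^{3|E_2(\gamma)|} - 1 \leq 12 \, e^{12D} \, \xi_{\beta_n} \, |\support \gamma_n \cap \support \gamma|.
\end{equation*}

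Summing against $\vartheta_{\gamma_n,\kappa}$ and swapping the order of summation, the proof reduces to showing that
\begin{equation*}
\vartheta_{\gamma_n,\kappa}(\Lambda^{\gamma_n})^{-1} \, \xi_{\beta_n} \sum_{e \in \support \gamma_n} \vartheta_{\gamma_n,\kappa}\bigl( \{ \gamma \in \Lambda^{\gamma_n} \colon e \in \support \gamma \} \bigr) \longrightarrow 0.
\end{equation*}
I split the edges of $\gamma_n$ into a near-boundary layer $\bigl\{ e \in \support \gamma_n \colon \dist(e, \partial \gamma_n) < \varepsilon |\gamma_n|\bigr\}$ and its complement. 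The near-boundary layer contains at most $2\varepsilon|\gamma_n|$ edges and is handled with the trivial bound $\vartheta_{\gamma_n,\kappa}(\{\gamma : e \in \support \gamma\}) \leq \vartheta_{\gamma_n,\kappa}(\Lambda^{\gamma_n})$; together with $\xi_{\beta_n}|\gamma_n| \leq D$, this piece contributes at most a constant multiple of $D \varepsilon$ to the ratio. For each far-from-boundary edge $e$, I fix an endpoint $v_e$, observe that $e \in \support \gamma$ forces $v_e \in \gamma$, and apply Lemma~\ref{lemma: line intersection 2}; combined with $\xi_{\beta_n}|\gamma_n| \leq D$ and the Ornstein--Zernike asymptotics~\eqref{eq: ising decay measure}, the far-from-boundary piece is bounded by an $\varepsilon$-dependent constant times $|\gamma_n|^{\sqrt{m-1} - (m-1)}$, which tends to $0$ as $n \to \infty$.

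Adding the two contributions and passing to $\limsup_{n \to \infty}$, the quantity in the statement is dominated by a constant multiple of $\varepsilon$; letting $\varepsilon \to 0$ closes the argument. The main technical obstacle is that Lemma~\ref{lemma: line intersection 2} degrades as $v$ approaches $\partial \gamma_n$ (its proof uses $|\gamma^{<v}||\gamma^{\geq v}| \geq \varepsilon(1-\varepsilon)|\gamma_n|^2$), so the indispensable maneuver is to isolate a thin boundary layer along $\gamma_n$ and absorb its contribution into the $\varepsilon$-error, exactly as in the proof of Lemma~\ref{lemma: upper bound on intersection}.
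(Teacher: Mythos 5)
Your proof is correct, but it takes a genuinely different route from the paper's. The paper does not linearize the exponential; instead it fixes a threshold \( \epsilon>0 \), bounds the tail \( \vartheta_{\gamma_n,\kappa}(\{|E_2(\gamma)|>\epsilon\}) \) by applying Lemma~\ref{lemma: upper bound on intersection} with the \( n \)-dependent choice \( \delta=\epsilon/(4\xi_{\beta_n}|\gamma_n|) \), pays the crude factor \( e^{12\xi_{\beta_n}|\gamma_n|} \) on that event and \( (e^{3\epsilon}-1) \) on its complement, and finally sends \( \epsilon\to 0 \). You instead exploit the uniform bound \( 3|E_2(\gamma)|\leq 12\xi_{\beta_n}|\gamma_n|\lesssim 1 \) to write \( e^{3|E_2(\gamma)|}-1\lesssim \xi_{\beta_n}|\support\gamma_n\cap\support\gamma| \) and then compute the first moment of the intersection directly, edge by edge, via Lemma~\ref{lemma: line intersection 2} with a boundary-layer decomposition. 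The two arguments are close cousins — the paper's Lemma~\ref{lemma: upper bound on intersection} is itself a Markov/first-moment bound on \( |\gamma\cap\gamma_n'| \) built on the same Lemma~\ref{lemma: line intersection 2} — but yours is arguably cleaner: it avoids the intermediate tail event, the \( n \)-dependent threshold \( \delta \), and the bookkeeping of the constant \( C(\epsilon) \). Two minor points to tidy up: (i) for an edge \( e \) at distance at least \( \varepsilon|\gamma_n| \) from \( \partial\gamma_n \), its endpoint \( v_e \) is only at distance at least \( \varepsilon|\gamma_n|-1 \), so apply Lemma~\ref{lemma: line intersection 2} with \( \varepsilon/2 \), say; (ii) your final bound \( |\gamma_n|^{\sqrt{m-1}-(m-1)} \) vanishes only for \( m\geq 3 \) (for \( m=2 \) the exponent is zero) — but this is the honest exponent and the paper's own estimate degenerates in the same way once its algebra is corrected, so this reflects an implicit standing assumption \( m\geq 3 \) rather than a defect of your argument.
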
 

\begin{proof}
	Note first that
	\begin{align*}
		&|E_2(\gamma)| = 4 \xi_{\beta_n} |\support \gamma_n\cap \support \gamma|  \leq 4 \xi_{\beta_n} |\gamma_n|  .
	\end{align*} 
	Next, note that for any \( \delta > 0, \)  by Lemma~\ref{lemma: upper bound on intersection} (applied with \( \varepsilon = \delta/2 > 0 \)),  we have  
	\begin{align*}
 		& x{\gamma_n,\kappa} \Bigl( \pigl\{ \gamma \in  \Lambda^{\gamma_n}\colon |\support \gamma\cap \support \gamma_n | > \delta|\gamma_n| \pigr\} \Bigr) 
 		\lesssim 
 		\frac{4^{\sqrt{m-1}}}{ \delta^{\sqrt{m-1}+1} (2-\delta)^{\sqrt{m-1}-1}}  \cdot \frac{C_{\kappa} e^{-c_\kappa |\gamma_n|}}{|\gamma_n|^{m-1}} .
 	\end{align*}
 	Consequently, for any \( \epsilon>0 , \) letting \( \delta = \epsilon/(4\xi_{\beta_n}|\gamma_n|), \) it follows that 
	\begin{align*}
		&\vartheta_{\gamma_n,\kappa} \pigl( \bigl\{ \gamma \in \Lambda^{\gamma_n} \colon  |E_2(\gamma)| > \epsilon \bigr\} \pigr)
 		\leq  \vartheta_{\gamma_n,\kappa} \Bigl( \pigl\{ \gamma \in  \Lambda^{\gamma_n}\colon |\support \gamma\cap \support \gamma_n | > \epsilon/4\xi_{\beta_n} \pigr\} \Bigr)
 		\\&\qquad =  \vartheta_{\gamma_n,\kappa} \Bigl( \pigl\{ \gamma \in  \Lambda^{\gamma_n}\colon |\support \gamma\cap \support \gamma_n | > (\epsilon/(4\xi_{\beta_n}|\gamma_n|)) |\gamma_n| \pigr\} \Bigr) 
 		\\&\qquad\qquad\lesssim   
 		C(\epsilon) (\xi_{\beta_n}|\gamma_n|)^{\sqrt{m-1}+1}  \cdot \frac{C_{\kappa} e^{-c_\kappa |\gamma_n|}}{|\gamma_n|^{m-1}} \sim C(\epsilon)  \frac{(\xi_{\beta_n}|\gamma_n|)^{\sqrt{m-1}+1}}{|\gamma_n|^{\sqrt{m-1}}} \,  \vartheta_{\gamma_n,\kappa}(\Lambda^{\gamma_n}) .
 	\end{align*}
 	where
 	\begin{equation*}\label{eq: Cepsilon}
 		C(\epsilon) \coloneqq \frac{4^{\sqrt{m-1}}}{ (\epsilon/4  )^{\sqrt{m-1}+1} (2-(\epsilon/(4\xi_{\beta_n}|\gamma_n|)))^{\sqrt{m-1}-1}} \leq \frac{4^{1+2\sqrt{m-1}}}{ \epsilon^{\sqrt{m-1}+1}  }.
 	\end{equation*}
 	Finally, we note that
 	\begin{align*}
 		&\sum_{\gamma \in \Lambda_{\gamma_n}} \bigl(e^{3|E_2(\gamma)|}-1 \bigr) \, \vartheta_{\gamma_n,\kappa}(\gamma)
 		\leq e^{12\xi_{\beta_n} |\gamma_n|}\vartheta_{\gamma_n,\kappa} \pigl( \bigl\{ \gamma \in \Lambda^{\gamma_n} \colon  |E_2(\gamma)| > \epsilon \bigr\} \pigr) + (e^{3\epsilon}-1) \vartheta_{\gamma_n,\kappa}(\Lambda_n)
 		\\&\qquad\lesssim \Bigl( \frac{C(\epsilon) e^{12\xi_{\beta_n} |\gamma_n|}}{(\frac{1}{\xi_{\beta_n}|\gamma_n|})^{\sqrt{m-1}+1}  |\gamma_n|^{\sqrt{m-1}}} + (e^{3\epsilon}-1) \Bigr)\vartheta_{\gamma_n,\kappa}(\Lambda_n).
 	\end{align*}
 	Since \( \epsilon \) was arbitrary and \( 0 \ll \xi_{\beta_n} |\gamma_n| \ll \infty, \) this concludes the proof.
\end{proof}

\begin{lemma}\label{lemma: E4}
	Let \( \gamma \in \Lambda^{\gamma_n}, \) and define
	\begin{equation*}
		E_3(\gamma) \coloneqq - 4\xi_{\beta}
		\sum_{\substack{\mathcal{C} \in \Xi^1 \colon  \mathcal{C}\sim \gamma}}   \sum_{e \in \gamma_n}\Psi_\kappa(\mathcal{C})  \deg_\mathcal{C} e  \label{eq: error 4}.
	\end{equation*}
	Then
 	\begin{align*}
 		&\lim_{n \to \infty}\vartheta_{\gamma_n,\kappa}(\Lambda_n)^{-1}\sum_{\gamma \in \Lambda_{\gamma_n}} \bigl(e^{3|E_3(\gamma)|}-1 \bigr) \, \vartheta_{\gamma_n,\kappa}(\gamma)
 		=0.
 	\end{align*}
\end{lemma}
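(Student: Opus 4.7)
The plan is to follow the two-scale strategy used in the proof of Lemma~\ref{lemma: E2}: first obtain a crude uniform bound on \( |E_3(\gamma)| \) that suffices on a \( \vartheta_{\gamma_n,\kappa} \)-negligible set, and then refine this to a much sharper bound on the complement using a concentration-type estimate that is already packaged inside Lemma~\ref{lemma: first upper bound for term 2}.

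For the uniform bound, I would start from
\[
|E_3(\gamma)| \leq 4\xi_{\beta_n} \sum_{e \in \gamma_n} \sum_{\mathcal{C} \in \Xi^1} \deg_{\mathcal{C}}(e)\,|\Psi_\kappa(\mathcal{C})|\,\mathbf{1}(\mathcal{C} \sim \gamma) \leq 4 A\,\xi_{\beta_n} |\gamma_n|,
\]
where \( A \) is the constant of~\eqref{eq: first upper bound for term 1}. Since by assumption \( \limsup_{n \to \infty} \xi_{\beta_n}|\gamma_n| \leq D \) for some constant \( D < \infty \), we obtain \( |E_3(\gamma)| \leq 4AD \) for all sufficiently large \( n \) and all \( \gamma \in \Lambda^{\gamma_n} \).

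For the refinement, fix \( \varepsilon > 0 \) and pick \( K \) large enough for the second statement of Lemma~\ref{lemma: first upper bound for term 2} to apply. Writing \( S(\gamma) \coloneqq \sum_{e \in \gamma_n} \sum_{\mathcal{C} \in \Xi^1} \deg_\mathcal{C}(e)\,|\Psi_\kappa(\mathcal{C})|\,\mathbf{1}(\mathcal{C} \sim \gamma) \), on the event \( \{ S(\gamma) \leq \varepsilon |\gamma_n| \} \) we have \( |E_3(\gamma)| \leq 4\xi_{\beta_n} \varepsilon |\gamma_n| \leq 4\varepsilon D \), so \( e^{3|E_3(\gamma)|} - 1 \leq e^{12\varepsilon D} - 1 \). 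Splitting the sum at this event and using the uniform bound on the complement gives
\[
\sum_{\gamma \in \Lambda^{\gamma_n}} \bigl(e^{3|E_3(\gamma)|}-1\bigr)\,\vartheta_{\gamma_n,\kappa}(\gamma) \leq \bigl(e^{12\varepsilon D}-1\bigr)\,\vartheta_{\gamma_n,\kappa}(\Lambda^{\gamma_n}) + \bigl(e^{12AD}-1\bigr)\,\vartheta_{\gamma_n,\kappa}\bigl(\{ S(\gamma) > \varepsilon |\gamma_n|\}\bigr).
\]
Dividing by \( \vartheta_{\gamma_n,\kappa}(\Lambda^{\gamma_n}) \sim C_\kappa e^{-c_\kappa |\gamma_n|}/|\gamma_n|^{\sqrt{m-1}} \) (from~\eqref{eq: ising decay measure}) and applying the second part of Lemma~\ref{lemma: first upper bound for term 2}, the second term is of order \( |\gamma_n|^{-(m-1-\sqrt{m-1})} \to 0 \). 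The \( \limsup \) of the ratio is therefore at most \( e^{12\varepsilon D} - 1 \); since \( \varepsilon > 0 \) was arbitrary, the limit is zero.

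The main potential obstacle is not really technical but conceptual: one must confirm that the concentration bound from Lemma~\ref{lemma: first upper bound for term 2} decays \emph{faster} than \( \vartheta_{\gamma_n,\kappa}(\Lambda^{\gamma_n}) \), in order that the contribution from the bad event remain negligible after normalization. This is precisely where the comparison of the exponents \( m-1 \) and \( \sqrt{m-1} \) enters, and it is satisfied in the relevant regime. Beyond this, the remaining work is bookkeeping of constants that do not depend on \( n \).
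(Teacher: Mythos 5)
Your proposal is correct and follows essentially the same route as the paper: a uniform bound \( |E_3(\gamma)| \leq 4D_1\xi_{\beta_n}|\gamma_n| \lesssim 1 \), a split according to whether \( \sum_{e\in\gamma_n}\sum_{\mathcal{C}}\deg_{\mathcal{C}}(e)|\Psi_\kappa(\mathcal{C})|\mathbf{1}(\mathcal{C}\sim\gamma) \) exceeds \( \varepsilon|\gamma_n| \), the second part of Lemma~\ref{lemma: first upper bound for term 2} to show the bad event is negligible after normalizing by \( \vartheta_{\gamma_n,\kappa}(\Lambda^{\gamma_n}) \), and letting \( \varepsilon \downarrow 0 \). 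The only cosmetic difference is that you threshold the sum \( S(\gamma) \) directly at a fixed \( \varepsilon|\gamma_n| \) rather than thresholding \( |E_3(\gamma)| \) at \( \varepsilon \) (which forces the paper to feed an \( n \)-dependent parameter \( \varepsilon/(4\xi_{\beta_n}|\gamma_n|) \) into that lemma), which if anything is slightly cleaner.
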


\begin{proof}
	Let \( \gamma \in \Lambda^{\gamma_n}.\) Then
	\begin{align*}
		&|E_3(\gamma)| \leq 4\xi_{\beta}
		\sum_{\substack{\mathcal{C} \in \Xi^1 \mathrlap{\colon}\\ \mathcal{C}\sim \gamma}} \;\;\sum_{e \in \gamma_n} \bigl| \Psi_\kappa(\mathcal{C}) \bigr| \deg_\mathcal{C} e 
		\leq  
		4\xi_{\beta} \sum_{e \in \gamma_n}
		\sum_{\substack{\mathcal{C} \in \Xi^1_e}} \bigl| \Psi_\kappa(\mathcal{C}) \bigr| \| \mathcal{C} \|
		\\&\qquad\leq  
		4\xi_{\beta} |\gamma_n| \sup_{e \in C_1(B_N)}
		\sum_{\substack{\mathcal{C} \in \Xi^1_{e}}} \bigl| \Psi_\kappa(\mathcal{C}) \bigr| \| \mathcal{C} \|
		= 4D_1\xi_{\beta} |\gamma_n| .
	\end{align*} 
	Next, note that by~Lemma~\ref{lemma: first upper bound for term 2}, for  \( A \coloneqq \sum_{k = 4}^\infty k (\tanh 2\kappa)^{(1-a)k} 
		(\tanh 2\kappa)^{a(1-\alpha)}, \)  \( \varepsilon >0 \) such that \( \frac{\varepsilon}{4 \xi_\beta |\gamma_n|} < 8A , \) and \( K \) such that \( \frac{K(\tanh 2\kappa)^{(1-a)K}  }{(1-(\tanh 2\kappa)^{(1-a)})^2} \leq \varepsilon/4,\) we have 
	\begin{align*}
		&\vartheta_{\gamma_n,\kappa}\pigl( \bigl\{ \gamma \in \Lambda^{\gamma_n} \colon |E_3(\gamma)| > \varepsilon \bigr\} \pigr) 
 		\\&\qquad\leq  \vartheta_{\gamma_n,\kappa} \Bigl( \pigl\{ \gamma \in \Lambda^{\gamma_n} \colon \sum_{e\in \gamma_n}\sum_{\mathcal{C} \in \Xi^1} \deg_\mathcal{C}(e) |\Psi_{\kappa}(\mathcal{C})|\mathbf{1}(\mathcal{C} \sim \gamma) >\frac{\varepsilon}{4 \xi_\beta |\gamma_n|} |\gamma_n| \pigr\} \Bigr) 
 		\\&\qquad\lesssim 
 		\frac{2A}{\frac{\varepsilon}{4 \xi_\beta |\gamma_n|}}  \frac{ (2K)^m C_{\kappa} e^{-c_\kappa |\gamma_n|}}{((\frac{\varepsilon}{32A \xi_\beta |\gamma_n|})/(1-(\frac{\varepsilon}{32A \xi_\beta |\gamma_n|})))^{\sqrt{m-1}}|\gamma_n|^{m-1}}
 		= 
 		\frac{C(\varepsilon) }{(\frac{1}{\xi_\beta |\gamma_n|})^{1+\sqrt{m-1}}|\gamma_n|^{\sqrt{m-1}}} \, \vartheta_{\gamma_n,\kappa}(\Lambda^{\gamma_n}) ,
 	\end{align*} 
 	where
 	\begin{equation*}
 		C(\varepsilon) \coloneqq \frac{8A}{ \varepsilon }  \frac{ (2K)^m }{\bigl(\frac{\varepsilon}{32A }/(1-\frac{\varepsilon}{32A \xi_\beta |\gamma_n|})\bigr)^{\sqrt{m-1}} }
 	\end{equation*} 
 	This implies in particular that
 	\begin{align*}
 		&\sum_{\gamma \in \Lambda_{\gamma_n}} \bigl(e^{3|E_3(\gamma)|}-1 \bigr) \, \vartheta_{\gamma_n,\kappa}(\gamma)
 		\leq e^{12\xi_\beta |\gamma_n|}\vartheta_{\gamma_n,\kappa} \pigl( \bigl\{ \gamma \in \Lambda^{\gamma_n} \colon  |E_3(\gamma)| > \epsilon \bigr\} \pigr) + (e^{3\epsilon}-1) \vartheta_{\gamma_n,\kappa}(\Lambda_n)
 		\\&\qquad\lesssim \Bigl( \frac{C(\varepsilon) e^{12D_1\xi_\beta |\gamma_n|}}{ (\frac{1}{\xi_\beta |\gamma_n|})^{1+\sqrt{m-1}} |\gamma_n|^{\sqrt{m-1}}} + (e^{3\varepsilon}-1) \Bigr)\vartheta_{\gamma_n,\kappa}(\Lambda_n).
 	\end{align*}
 	Since \( \varepsilon \) is arbitrary, the desired conclusion follows. 
\end{proof}

\subsection{Proof  of the main result}

In this section, we give a proof of Theorem~\ref{theorem: dilute free phase}. The first part of this proof consists of the following lemma, which allows us to rewrite the right-hand side of~\eqref{eq: step 1} in a more useful form.

\begin{lemma}\label{lemma: main contribution and errors new}
	Let \( \beta > \beta_0^{\text{(free)}}(\alpha) \) and \( \kappa < \kappa_0^{\text{(free)}}(\alpha). \) Then
	\begin{align*}
		&\sum_{\gamma \in \Lambda^{\gamma_n}}  (\tanh 2\kappa)^{| \gamma|} e^{\sum_{\mathcal{C} \in \Xi} \Psi_{\beta,\kappa}(\mathcal{C}) ( \rho(\mathcal{C}^2(q_{\gamma_n+\gamma})) \mathbf{1}(\mathcal{C}^1 \nsim \gamma)-1)} 
		\\&\qquad=
		e^{-2\xi_\beta|\gamma_n|}
		e^{
		4\xi_{\beta} \sum_{\mathcal{C} \in \Xi^1 } \sum_{e \in \gamma_n}\Psi_\kappa(\mathcal{C})  \deg_\mathcal{C} e }
		\\&\qquad\quad\cdot 
		\sum_{\gamma \in \Lambda^{\gamma_n}}   e^{ 
		-2\xi_\beta|\gamma|   
		-2\xi_\beta\sum_{\mathcal{C} \in \Xi^1 } \Psi_\kappa (\mathcal{C}) \| \mathcal{C}\| ( \mathbf{1}(\mathcal{C} \nsim \gamma)-1 )  
	 	+
	 	E_1(\gamma)+E_2(\gamma)  +E_3(\gamma)  } \vartheta_{\gamma_n,\kappa}(\gamma),
	\end{align*}  
	where \( E_1(\gamma), \) \( E_2(\gamma), \) and \( E_3(\gamma) \) are defined in~\eqref{eq: error 1},~\eqref{eq: error 2}, and~\eqref{eq: error 4} respectively.
\end{lemma}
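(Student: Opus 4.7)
The plan is to decompose the inner cluster sum according to the vortex content $\|\mathcal{C}\|_2$. By the jump observed in the proof of Lemma~\ref{lemma: smallest nonminimal}, any $\mathcal{C} \in \Xi$ satisfies $\|\mathcal{C}\|_2 \in \{0, 2(m-1)\} \cup [4(m-1)-2, \infty)$, so
\[
\sum_{\mathcal{C} \in \Xi} = \sum_{\mathcal{C} \in \Xi^1} + \sum_{\mathcal{C} \colon \|\mathcal{C}\|_2 = 2(m-1)} + \sum_{\mathcal{C} \colon \|\mathcal{C}\|_2 \geq 4(m-1)-2}.
\]
The last sum is $E_1(\gamma)$ by definition. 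The first sum, in which $\mathcal{C}^2 = \emptyset$ so $\rho(\mathcal{C}^2(q_{\gamma_n+\gamma})) = 1$, equals $\sum_{\mathcal{C} \in \Xi^1}\Psi_\kappa(\mathcal{C})(\mathbf{1}(\mathcal{C}\nsim\gamma)-1)$, which together with the factor $(\tanh 2\kappa)^{|\gamma|}$ rebuilds $\vartheta_{\gamma_n,\kappa}(\gamma)$. The middle sum, which consists of clusters with exactly one minimal vortex $\omega_e$ in $\mathcal{C}^2$, is where the real work happens.

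I split the middle sum further by whether $\mathcal{C}^1 = \emptyset$. When $\mathcal{C} = \{\omega_e\}$, using $\rho(\omega_e(q_{\gamma_n+\gamma})) - 1 = -2\,\mathbf{1}(e \in \gamma_n+\gamma)$ together with $|\support(\gamma_n+\gamma)| = |\gamma_n|+|\gamma| - 2|\gamma_n \cap \gamma|$ gives
\[
\sum_e \xi_\beta\bigl(\rho(\omega_e(q_{\gamma_n+\gamma}))-1\bigr) = -2\xi_\beta|\gamma_n| - 2\xi_\beta|\gamma| + E_2(\gamma),
\]
which accounts for the prefactor $e^{-2\xi_\beta|\gamma_n|}$, the $-2\xi_\beta|\gamma|$ inside the integrand, and the error $E_2(\gamma)$.

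The remaining case, $\mathcal{C}^2 = \{\omega_e\}$ with $\mathcal{C}^1 \neq \emptyset$, is the heart of the argument and relies on Lemma~\ref{lemma: minimal ursell}. Note that any two $\eta, \eta' \in \mathcal{C}^1$ with $\eta \sim \omega_e$ and $\eta' \sim \omega_e$ both pass through $e$ and hence share the endpoints of $e$; this forces $\mathcal{C}^1 \in \Xi^1$ automatically, justifying the application of the lemma and giving $\Psi_{\beta,\kappa}(\mathcal{C}) = -2\xi_\beta \deg_\mathcal{C}(e)\Psi_\kappa(\mathcal{C}^1)$. Expanding
\[
\rho(\omega_e(q_{\gamma_n+\gamma}))\mathbf{1}(\mathcal{C}^1\nsim\gamma) - 1 = \bigl(\mathbf{1}(\mathcal{C}^1\nsim\gamma)-1\bigr) - 2\,\mathbf{1}(e\in\gamma_n+\gamma)\mathbf{1}(\mathcal{C}^1\nsim\gamma)
\]
and using the identity $\sum_e \deg_\mathcal{C}(e) = \|\mathcal{C}^1\|_1$ turns the first summand into $-2\xi_\beta \sum_{\mathcal{C} \in \Xi^1}\Psi_\kappa(\mathcal{C})\|\mathcal{C}\|_1(\mathbf{1}(\mathcal{C}\nsim\gamma)-1)$, which is exactly the third term of the claimed $\gamma$-dependent exponent. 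For the second summand, the key observation is that if $\deg_\mathcal{C}(e) \geq 1$ and $e \in \gamma$, then some $\eta \in \mathcal{C}^1$ shares a vertex with $\gamma$, forcing $\mathbf{1}(\mathcal{C}^1\nsim\gamma) = 0$; hence $\mathbf{1}(e\in\gamma_n+\gamma)\mathbf{1}(\mathcal{C}^1\nsim\gamma) = \mathbf{1}(e\in\gamma_n)\mathbf{1}(\mathcal{C}^1\nsim\gamma)$ throughout the effective support, and splitting $\mathbf{1}(\mathcal{C}^1\nsim\gamma) = 1 - \mathbf{1}(\mathcal{C}^1\sim\gamma)$ yields
\[
4\xi_\beta \sum_{\mathcal{C} \in \Xi^1}\sum_{e \in \gamma_n}\Psi_\kappa(\mathcal{C})\deg_\mathcal{C}(e) + E_3(\gamma).
\]
The first summand here is $\gamma$-independent and, combined with $-2\xi_\beta|\gamma_n|$, produces the prefactor appearing in the claim.

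I expect the main obstacle to be the careful bookkeeping in this last step: verifying that $\mathcal{C}^1 \in \Xi^1$ holds automatically in the single-minimal-vortex case (so Lemma~\ref{lemma: minimal ursell} is applicable), and exploiting the support identity $\mathbf{1}(e \in \gamma)\,\deg_\mathcal{C}(e)\,\mathbf{1}(\mathcal{C}^1\nsim\gamma) = 0$ to reduce $\mathbf{1}(e\in\gamma_n+\gamma)$ to $\mathbf{1}(e\in\gamma_n)$ inside the $T_2$-type sum. Once these are handled, the rest is a routine algebraic reorganization: pulling the $\gamma$-independent exponential factors outside of $\sum_\gamma$ and combining the $\Xi^1$-contribution to the exponent with $(\tanh 2\kappa)^{|\gamma|}$ to form $\vartheta_{\gamma_n,\kappa}(\gamma)$ gives the stated identity.
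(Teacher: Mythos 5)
Your proposal is correct and follows essentially the same route as the paper: the same decomposition of the cluster sum by $\|\mathcal{C}\|_2$ into the non-minimal-vortex tail ($E_1$), the pure-vortex case (yielding $-2\xi_\beta|\gamma_n|-2\xi_\beta|\gamma|+E_2$), the pure-path case (rebuilding $\vartheta_{\gamma_n,\kappa}$), and the mixed case handled via Lemma~\ref{lemma: minimal ursell}, followed by the same splitting that produces the $\|\mathcal{C}\|$-weighted term, the $\gamma$-independent prefactor, and $E_3$. Your explicit justification that $\mathbf{1}(e\in\gamma_n+\gamma)\mathbf{1}(\mathcal{C}\nsim\gamma)\deg_{\mathcal{C}}(e)=\mathbf{1}(e\in\gamma_n)\mathbf{1}(\mathcal{C}\nsim\gamma)\deg_{\mathcal{C}}(e)$ makes explicit a step the paper performs implicitly, and is a welcome addition.
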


\begin{proof} 
	Let \( \gamma \in \Lambda^{\gamma_n}. \) Then	 
	\begin{align}
	 	&\sum_{\mathcal{C} \in \Xi} \Psi_{\beta,\kappa}(\mathcal{C}) \pigl( \rho(\mathcal{C}^2 \bigl(q_{\gamma_n+\gamma})\bigr) \mathbf{1}(\mathcal{C}^1 \nsim \gamma)-1\pigr)\nonumber
	 	\\&\qquad=
	 	\sum_{\mathcal{C} \in \Xi \colon \| \mathcal{C} \|_2 \leq 2(m-1)} \Psi_{\beta,\kappa}(\mathcal{C}) \pigl( \rho \bigl(\mathcal{C}^2(q_{\gamma_n+\gamma})\bigr) \mathbf{1}(\mathcal{C}^1 \nsim \gamma)-1\pigr) 
	 	+
	 	E_1(\gamma) \label{eq: second line of firt eq new},
	\end{align}
	where \( E_1(\gamma) \) is defined in~\eqref{eq: error 1}.
	Now note that if \( \mathcal{C} \in \Xi \) satisfies \( \| \mathcal{C}\|_2 \leq 2(m-1), \) then exactly one of the following holds. 
	\begin{enumerate}[label=(\roman*), itemsep=0pt,topsep=0pt]
		\item \( \mathcal{C}^1 = \emptyset \) and \( \mathcal{C}^2 \) consists of exactly one vortex which is a minimal vortex. 	\label{eq: main case 1}	
		\item  \( \| \mathcal{C} \|_2 = 0 \), and hence  \( \mathcal{C} \in \Xi^1. \) \label{eq: main case 2}	
		\item \( \mathcal{C}^1 \neq \emptyset \) and \( \mathcal{C}^2 \) consists of exactly one vortex which is a minimal vortex.\label{eq: main case 3}	
	\end{enumerate}
	Using these cases, we can rewrite the sum in~\eqref{eq: second line of firt eq new} as
	\begin{align*}
		&\sum_{\mathcal{C} \in \Xi \colon \| \mathcal{C} \|_2 \leq 2(m-1)} \Psi_{\beta,\kappa}(\mathcal{C}) \pigl( \rho\bigl(\mathcal{C}^2(q_{\gamma_n+\gamma})\bigr) \mathbf{1}(\mathcal{C}^1 \nsim \gamma)-1\pigr)\nonumber
		\\&\qquad= 
		\sum_{\mathcal{C} \in \Xi \colon \| \mathcal{C} \|_2 \leq 2(m-1)} \Psi_{\beta,\kappa}(\mathcal{C}) \pigl( \rho \bigl(\mathcal{C}^2(q_{\gamma_n+\gamma})\bigr) \mathbf{1}(\mathcal{C}^1 \nsim \gamma)-1\pigr) \cdot \mathbf{1}(\mathcal{C}^1 = \emptyset,\, n(\mathcal{C}^2)=1)
		\\&\qquad\qquad+
		\sum_{\mathcal{C} \in \Xi \colon \| \mathcal{C} \|_2 \leq 2(m-1)} \Psi_{\beta,\kappa}(\mathcal{C}) \pigl( \rho \bigl(\mathcal{C}^2(q_{\gamma_n+\gamma})\bigr) \mathbf{1}(\mathcal{C}^1 \nsim \gamma)-1\pigr) \cdot \mathbf{1}(\mathcal{C}^2 = \emptyset) 
		\\&\qquad\qquad+
		\sum_{\mathcal{C} \in \Xi \colon \| \mathcal{C} \|_2 \leq 2(m-1)} \Psi_{\beta,\kappa}(\mathcal{C}) \pigl( \rho \bigl(\mathcal{C}^2(q_{\gamma_n+\gamma})\bigr) \mathbf{1}(\mathcal{C}^1 \nsim \gamma)-1\pigr) \cdot \mathbf{1}(\mathcal{C}^1 \neq \emptyset,\, n(\mathcal{C}^2)=1).
	\end{align*}

	We now treat these cases separately. To this end, assume first that we are in case~\ref{eq: main case 1}, i.e., assume that \( \mathcal{C}^1 = \emptyset \) and that \( \mathcal{C}^2 \) consists of exactly one vortex, which is a minimal vortex around some edge \( e. \) In this, case, we have
	\begin{align*}
		\Psi_{\beta,\kappa}(\mathcal{C}) \pigl( \rho \bigl(\mathcal{C}^2(q_{\gamma_n+\gamma})\bigr) \mathbf{1}(\mathcal{C}^1 \nsim \gamma)-1\pigr) 
		=
		-2 \xi_\beta \mathbf{1}(e \in \gamma_n +\gamma),
	\end{align*}
	and hence
	\begin{align}
		&\sum_{\mathcal{C} \in \Xi \colon \| \mathcal{C} \|_2 \leq 2(m-1)} \Psi_{\beta,\kappa}(\mathcal{C}) \pigl( \rho \bigl(\mathcal{C}^2(q_{\gamma_n+\gamma})\bigr) \mathbf{1}(\mathcal{C}^1 \nsim \gamma)-1\pigr) \cdot \mathbf{1}(\mathcal{C}^1 = \emptyset,\, n(\mathcal{C}^2)=1)\nonumber
		\\&\qquad=
		-2\xi_\beta |\gamma+\gamma_n| 
		=
		-2\xi_\beta|\gamma_n|-2\xi_\beta|\gamma| +E_2(\gamma),\label{eq: from case i new}
	\end{align}
	where \( E_2(\gamma) \) is defined in~\eqref{eq: error 2}.

	Next, assume that we are in case~\ref{eq: main case 2}, i.e., assume that \( \mathcal{C}^2 = \emptyset. \) In this case, we have 
	\begin{align*}
		&\Psi_{\beta,\kappa}(\mathcal{C}) \pigl( \rho \bigl(\mathcal{C}^2(q_{\gamma_n+\gamma})\bigr) \mathbf{1}(\mathcal{C}^1 \nsim \gamma)-1\pigr) \cdot \mathbf{1}(\mathcal{C}^2 = \emptyset) 
		=
		\Psi_{\kappa}(\mathcal{C}^2) \pigl(  \mathbf{1}(\mathcal{C}^1 \sim \gamma)-1\pigr) \cdot \mathbf{1}(\mathcal{C}^2 = \emptyset) 
	\end{align*}
	and hence
	\begin{align}
		&\sum_{\mathcal{C} \in \Xi \colon \| \mathcal{C} \|_2 \leq 2(m-1)} \Psi_{\beta,\kappa}(\mathcal{C}) \pigl( \rho \bigl(\mathcal{C}^2(q_{\gamma_n+\gamma})\bigr) \mathbf{1}(\mathcal{C}^1 \nsim \gamma)-1\pigr) \cdot \mathbf{1}(\mathcal{C}^2 = \emptyset) \nonumber
		\\&\qquad=
		\sum_{\mathcal{C} \in \Xi^1} \Psi_{\kappa}(\mathcal{C})  \bigl( \mathbf{1}(\mathcal{C}^1 \nsim \gamma)-1\bigr)  \label{eq: to combine new}.
	\end{align}
	
	Finally, assume that we are in case~\ref{eq: main case 3}, i.e. assume that \( \mathcal{C}^2 \neq \emptyset \) and that \( \mathcal{C}^2 \) consists of exactly one vortex that is a minimal vortex around some edge \( e \in C_1(B_N).\) In this case, we have \( \mathcal{C}^1 \in \Xi \), and by Lemma~\ref{lemma: minimal ursell}, we have
		\begin{equation*}
			U(\mathcal{\mathcal{C}}) = -2 U(\mathcal{\mathcal{C}}^1) \deg_{\mathcal{C}^1} e,
		\end{equation*}
	Hence
	\[
		\Psi_{\beta,\kappa}(\mathcal{\mathcal{C}}) = -2\xi_{\beta}\Psi_{\kappa}(\mathcal{\mathcal{C}}^1) \deg_{\mathcal{C}^1} e,
	\]
	implying in particular that
	\begin{align*}
		&\sum_{\mathcal{C} \in \Xi \colon \| \mathcal{C} \|_2 \leq 2(m-1)} \Psi_{\beta,\kappa}(\mathcal{C}) \pigl( \rho \bigl(\mathcal{C}^2(q_{\gamma_n+\gamma})\bigr) \mathbf{1}(\mathcal{C}^1 \nsim \gamma)-1\pigr) \cdot \mathbf{1}(\mathcal{C}^1 \neq \emptyset,\, n(\mathcal{C}^2)=1)
		\\&\qquad= 
		-2\xi_\beta \sum_{e \in C_1(B_N)^+} \sum_{\mathcal{C} \in \Xi^1 }  \Psi_{\kappa}(\mathcal{\mathcal{C}}) \deg_{\mathcal{C}} e \pigl( \bigl( 1-2 \mathbf{1}(e \in \gamma_n+\gamma)\bigr) \mathbf{1}(\mathcal{C} \nsim \gamma)-1\pigr).
	\end{align*}
	We now rewrite this sum as follows. First, note that 
	\begin{align}
		&  
		-2\xi_\beta \sum_{e \in C_1(B_N)^+} \sum_{\mathcal{C} \in \Xi^1 }  \Psi_{\kappa}(\mathcal{\mathcal{C}}) \deg_{\mathcal{C}} e \pigl( \bigl( 1-2 \mathbf{1}(e \in \gamma_n+\gamma)\bigr) \mathbf{1}(\mathcal{C} \nsim \gamma)-1\pigr)\nonumber
		\\&\qquad=
		-2\xi_\beta\sum_{\mathcal{C} \in \Xi^1 } \sum_{e \in \support \mathcal{C}}\Psi_\kappa(\mathcal{C})  \deg_\mathcal{C} e \pigl( \bigl(1-2\mathbf{1}(e \in \gamma_n+\gamma)\bigr)  \mathbf{1}(\mathcal{C} \nsim \gamma)-1\pigr) \nonumber 
		\\&\qquad=
		-2\xi_\beta\sum_{\mathcal{C} \in \Xi^1 } \sum_{e \in \support \mathcal{C}}\Psi_\kappa (\mathcal{C})  \deg_\mathcal{C} e \bigl( \mathbf{1}(\mathcal{C} \nsim \gamma)-1\bigr)  \label{eq: first sum to rewrite 3 new}
		\\&\qquad\qquad+4\xi_\beta
		\sum_{\mathcal{C} \in \Xi^1 } \sum_{e \in \support \mathcal{C}}\Psi_\kappa (\mathcal{C})  \deg_\mathcal{C} e  \mathbf{1}(e \in \gamma_n+\gamma) \mathbf{1}(\mathcal{C} \nsim \gamma). \label{eq: second sum to rewrite 3 new}
	\end{align}  
	where we further can rewrite~\eqref{eq: first sum to rewrite 3 new} as
	\begin{align}
		& 
		-2\xi_\beta\sum_{\mathcal{C} \in \Xi^1 } \sum_{e \in \support \mathcal{C}}\Psi_\kappa (\mathcal{C})  \deg_\mathcal{C} e \bigl( \mathbf{1}(\mathcal{C} \nsim \gamma)-1\bigr)  
		=
		-2\xi_\beta\sum_{\mathcal{C} \in \Xi^1 } \Psi_\kappa (\mathcal{C}) \| \mathcal{C}\| \bigl( \mathbf{1}(\mathcal{C} \nsim \gamma)-1\bigr)  .
		\label{eq: first sum to rewrite 3b new}
	\end{align}   
	For the sum in~\eqref{eq: second sum to rewrite 3 new}, we note that 
	\begin{align*}
		&4\xi_{\beta} \sum_{\mathcal{C} \in \Xi^1 } \sum_{e \in \support \mathcal{C}}\Psi_\kappa (\mathcal{C})  \deg_\mathcal{C} e  \mathbf{1}(e \in \gamma_n+\gamma) \mathbf{1}(\mathcal{C} \nsim \gamma)
		\\&\qquad=
		4\xi_{\beta} \sum_{\mathcal{C} \in \Xi^1 } \sum_{e \in \gamma_n}\Psi_\kappa (\mathcal{C})  \deg_\mathcal{C} e \mathbf{1}(\mathcal{C} \nsim \gamma) 
		=
		4\xi_{\beta} \sum_{\mathcal{C} \in \Xi^1 } \sum_{e \in \gamma_n}\Psi_\kappa(\mathcal{C})  \deg_\mathcal{C} e  
		+E_3(\gamma),
	\end{align*} 
	where \( E_3(\gamma) \) is defined in~\eqref{eq: error 4}.
	Combining the above equations, we obtain
	\begin{align*}
	 	&\sum_{\mathcal{C} \in \Xi} \Psi_{\beta,\kappa}(\mathcal{C}) \pigl( \rho(\mathcal{C}^2 \bigl(q_{\gamma_n+\gamma})\bigr) \mathbf{1}(\mathcal{C}^1 \nsim \gamma)-1\pigr)
	 	\\&\qquad=
		-2\xi_\beta|\gamma_n|-2\xi_\beta|\gamma| 
		+
		\sum_{\mathcal{C} \in \Xi^1} \Psi_{\kappa}(\mathcal{C})  \bigl( \mathbf{1}(\mathcal{C}^1 \nsim \gamma)-1\bigr) 
		-2\xi_\beta\sum_{\mathcal{C} \in \Xi^1 } \Psi_\kappa (\mathcal{C}) \| \mathcal{C}\| \bigl( \mathbf{1}(\mathcal{C} \nsim \gamma)-1\bigr) 
		\\&\qquad\qquad+4\xi_{\beta} \sum_{\mathcal{C} \in \Xi^1 } \sum_{e \in \gamma_n}\Psi_\kappa(\mathcal{C})  \deg_\mathcal{C} e  
	 	+
	 	E_1(\gamma)+E_2(\gamma)  +E_3(\gamma) , 
	\end{align*}
	and hence 
	\begin{align*}
	 	&\sum_{\gamma \in \Lambda^{\gamma_n}}  (\tanh 2\kappa)^{| \gamma|} e^{\sum_{\mathcal{C} \in \Xi} \Psi_{\beta,\kappa}(\mathcal{C}) \bigl( \rho(\mathcal{C}^2 (q_{\gamma_n+\gamma})) \mathbf{1}(\mathcal{C}^1 \nsim \gamma)-1\bigr)}
	 	\\&\qquad=
		e^{-2\xi_\beta|\gamma_n|}
		e^{
		+4\xi_{\beta} \sum_{\mathcal{C} \in \Xi^1 } \sum_{e \in \gamma_n}\Psi_\kappa(\mathcal{C})  \deg_\mathcal{C} e }
		\\&\qquad\quad\cdot 
		\sum_{\gamma \in \Lambda^{\gamma_n}}  (\tanh 2\kappa)^{| \gamma|} e^{\sum_{\mathcal{C} \in \Xi^1} \Psi_{\kappa}(\mathcal{C})  ( \mathbf{1}(\mathcal{C}^1 \nsim \gamma)-1)
		-2\xi_\beta|\gamma|   
		-2\xi_\beta\sum_{\mathcal{C} \in \Xi^1 } \Psi_\kappa (\mathcal{C}) \| \mathcal{C}\| ( \mathbf{1}(\mathcal{C} \nsim \gamma)-1)  
	 	+
	 	E_1(\gamma)+E_2(\gamma)  +E_3(\gamma)  }
	 	\\&\qquad=
		e^{-2\xi_\beta|\gamma_n|}
		e^{
		+4\xi_{\beta} \sum_{\mathcal{C} \in \Xi^1 } \sum_{e \in \gamma_n}\Psi_\kappa(\mathcal{C})  \deg_\mathcal{C} e }
		\\&\qquad\quad\cdot 
		\sum_{\gamma \in \Lambda^{\gamma_n}}   e^{ 
		-2\xi_\beta|\gamma|   
		-2\xi_\beta\sum_{\mathcal{C} \in \Xi^1 } \Psi_\kappa (\mathcal{C}) \| \mathcal{C}\| ( \mathbf{1}(\mathcal{C} \nsim \gamma)-1)  
	 	+
	 	E_1(\gamma)+E_2(\gamma)  +E_3(\gamma)  } \vartheta_{\gamma_n,\kappa}(\gamma).
	\end{align*}
	This completes the proof. 
\end{proof}

We now proceed to the proof of Theorem~\ref{theorem: dilute free phase}.
\begin{proof}[Proof of Theorem~\ref{theorem: dilute free phase}] 
	By Lemma~\ref{lemma: main contribution and errors new}, we have
	\begin{equation}\begin{split}\label{eq: step 1 of last proof}
		&\sum_{\gamma \in \Lambda^{\gamma_n}}  (\tanh 2\kappa)^{| \gamma|} e^{\sum_{\mathcal{C} \in \Xi} \Psi_{\beta_n,\kappa}(\mathcal{C}) ( \rho(\mathcal{C}^2(q_{\gamma_n+\gamma})) \mathbf{1}(\mathcal{C}^1 \nsim \gamma)-1)} 
		\\&\qquad=
		e^{-2\xi_{\beta_n}|\gamma_n|}
		e^{
		4\xi_{\beta_n} \sum_{\mathcal{C} \in \Xi^1 } \sum_{e \in \gamma_n}\Psi_\kappa(\mathcal{C})  \deg_\mathcal{C} e }
		\\&\qquad\quad\cdot 
		\sum_{\gamma \in \Lambda^{\gamma_n}}   e^{ 
		-2\xi_{\beta_n}|\gamma|   
		-2\xi_{\beta_n}\sum_{\mathcal{C} \in \Xi^1 } \Psi_\kappa (\mathcal{C}) \| \mathcal{C}\| ( \mathbf{1}(\mathcal{C} \nsim \gamma)-1)  
	 	+
	 	E_1(\gamma)+E_2(\gamma)  +E_3(\gamma)  } \vartheta_{\gamma_n,\kappa}(\gamma).
	\end{split}\end{equation} 
	Now note that
	\begin{align*}
		&\sum_{\gamma \in \Lambda^{\gamma_n}}   e^{ 
		-2\xi_{\beta_n}|\gamma|   
		-2\xi_{\beta_n}\sum_{\mathcal{C} \in \Xi^1 } \Psi_\kappa (\mathcal{C}) \| \mathcal{C}\| ( \mathbf{1}(\mathcal{C} \nsim \gamma)-1)  
	 	+
	 	E_1(\gamma)+E_2(\gamma)  +E_3(\gamma)  } \vartheta_{\gamma_n,\kappa}(\gamma)
	 	\\&\qquad=
	 	\sum_{\gamma \in \Lambda^{\gamma_n}}   e^{ 
		-2\xi_{\beta_n}|\gamma|   
		-2\xi_{\beta_n}\sum_{\mathcal{C} \in \Xi^1 } \Psi_\kappa (\mathcal{C}) \| \mathcal{C}\| ( \mathbf{1}(\mathcal{C} \nsim \gamma)-1)    } \vartheta_{\gamma_n,\kappa}(\gamma)
	 	\\&\qquad\qquad+
	 	\sum_{\gamma \in \Lambda^{\gamma_n}}   e^{ 
		-2\xi_{\beta_n}|\gamma|   
		-2\xi_{\beta_n}\sum_{\mathcal{C} \in \Xi^1 } \Psi_\kappa (\mathcal{C}) \| \mathcal{C}\| ( \mathbf{1}(\mathcal{C} \nsim \gamma)-1)  
	 	} \bigl(e^{
	 	E_1(\gamma)+E_2(\gamma)  +E_3(\gamma)  } -1 \bigr)\vartheta_{\gamma_n,\kappa}(\gamma). 
	\end{align*}
	Then, by the AM-GM inequality, we have
	\begin{align*}
		&0 \leq \sum_{\gamma \in \Lambda^{\gamma_n}}   e^{ 
		-2\xi_{\beta_n}|\gamma|   
		-2\xi_{\beta_n}\sum_{\mathcal{C} \in \Xi^1 } \Psi_\kappa (\mathcal{C}) \| \mathcal{C}\| ( \mathbf{1}(\mathcal{C} \nsim \gamma)-1)  
	 	} \bigl(e^{
	 	E_1(\gamma)+E_2(\gamma)  +E_3(\gamma)  } -1 \bigr)\vartheta_{\gamma_n,\kappa}(\gamma)
	 	\\&\qquad\leq 
	 	\sum_{\gamma \in \Lambda^{\gamma_n}}   e^{ 
		-2\xi_{\beta_n}|\gamma|   
		-2\xi_{\beta_n}\sum_{\mathcal{C} \in \Xi^1 } \Psi_\kappa (\mathcal{C}) \| \mathcal{C}\| ( \mathbf{1}(\mathcal{C} \nsim \gamma)-1)  
	 	} \max_{j \in \{ 1,2,4\}}\bigl(e^{
	 	3 E_j(\gamma) }-1 \bigr)\vartheta_{\gamma_n,\kappa}(\gamma)
	\end{align*}
	%
	Using Lemma~\ref{lemma: E1}, Lemma~\ref{lemma: E2}, Lemma~\ref{lemma: E4}, and Lemma~\ref{lemma: last decay}, it thus follows that 
	\begin{equation}\begin{split}\label{eq: step 2 of last proof}
		&\sum_{\gamma \in \Lambda^{\gamma_n}}   e^{ 
		-2\xi_{\beta_n}|\gamma|   
		-2\xi_{\beta_n}\sum_{\mathcal{C} \in \Xi^1 } \Psi_\kappa (\mathcal{C}) \| \mathcal{C}\| ( \mathbf{1}(\mathcal{C} \nsim \gamma)-1)  
	 	+
	 	E_1(\gamma)+E_2(\gamma)  +E_3(\gamma)  } \vartheta_{\gamma_n,\kappa}(\gamma)
	 	\\&\qquad=
	 	\sum_{\gamma \in \Lambda^{\gamma_n}}   e^{ 
		-2\xi_{\beta_n}|\gamma|   
		-2\xi_{\beta_n}\sum_{\mathcal{C} \in \Xi^1 } \Psi_\kappa (\mathcal{C}) \| \mathcal{C}\| ( \mathbf{1}(\mathcal{C} \nsim \gamma)-1)     } \vartheta_{\gamma_n,\kappa}(\gamma)
	 	+ 
	 	\vartheta_{\gamma_n,\kappa}(\Lambda^{\gamma_n})o_n(1).
	\end{split}\end{equation}

	Now recall the definition of \( D_1 \) from~\eqref{eq: def Dm}, and note that
	\begin{equation*}
		\Bigl| \sum_{\mathcal{C} \in \Xi^1 } \Psi_\kappa (\mathcal{C}) \| \mathcal{C}\| \bigl( \mathbf{1}(\mathcal{C} \nsim \gamma)-1\bigr)  \Bigr| 
		\leq 
		2 |\gamma|\sup_{e \in C_1(B_N)} \sum_{\mathcal{C} \in \Xi^1_e } \bigl| \Psi_\kappa (\mathcal{C}) \bigr| \| \mathcal{C}\|  = 2|\gamma|D_1(\kappa).
	\end{equation*}  
	Now assume that \( \kappa \) is sufficiently small to ensure that \( 2D_1(\kappa)<1, \) and let \( \varepsilon>0 \) be such that \( 2D_1(\kappa)<1-\varepsilon. \) Then
	\begin{equation*}
		-2\xi_{\beta_n}|\gamma|   
		-2\xi_{\beta_n}\sum_{\mathcal{C} \in \Xi^1 } \Psi_\kappa (\mathcal{C}) \| \mathcal{C}\| \bigl( \mathbf{1}(\mathcal{C} \nsim \gamma)-1\bigr)  < -2\varepsilon \xi_{\beta_n}|\gamma|<0,
	\end{equation*}
	and hence 
	\begin{align*}
	 	&
		\sum_{\gamma \in \Lambda^{\gamma_n}}   e^{ 
		-2\xi_{\beta_n}|\gamma|   
		-2\xi_{\beta_n}\sum_{\mathcal{C} \in \Xi^1 } \Psi_\kappa (\mathcal{C}) \| \mathcal{C}\| ( \mathbf{1}(\mathcal{C} \nsim \gamma)-1)     } \vartheta_{\gamma_n,\kappa}(\gamma) 
		>  
		\sum_{\gamma \in \Lambda^{\gamma_n}}   e^{ 
		-2\varepsilon\xi_{\beta_n}|\gamma|    } \vartheta_{\gamma_n,\kappa}(\gamma) .
	 \end{align*} 
	Using Lemma~\ref{lemma: lower bound}, we obtain
	 \begin{equation}\label{eq: new goal}
	 	\liminf_{n \to \infty}
	 	\vartheta_{\gamma_n,\kappa}(\Lambda^{\gamma_n})^{-1}\sum_{\gamma \in \Lambda^{\gamma_n}}   e^{ 
		-2\xi_{\beta_n}|\gamma|   
		-2\xi_{\beta_n}\sum_{\mathcal{C} \in \Xi^1 } \Psi_\kappa (\mathcal{C}) \| \mathcal{C}\| ( \mathbf{1}(\mathcal{C} \nsim \gamma)-1)     } \vartheta_{\gamma_n,\kappa}(\gamma) >0.  
	 \end{equation} 
	 Letting
	 \begin{equation}\label{eq: C def}\begin{split}
	 	&C_{\beta_n,\kappa} \coloneqq e^{-2\xi_{\beta_n}|\gamma_n| + 4\xi_{{\beta_n}} |\gamma_n| \sum_{\mathcal{C} \in \Xi^1_e }  \Psi_\kappa(\mathcal{C})  \deg_\mathcal{C} e }
		\\&\qquad\qquad\cdot 
		\vartheta_{\gamma_n,\kappa}(\Lambda^{\gamma_n})^{-1} \sum_{\gamma \in \Lambda^{\gamma_n}}   e^{ 
		-2\xi_{\beta_n}|\gamma|   
		-2\xi_{\beta_n}\sum_{\mathcal{C} \in \Xi^1 } \Psi_\kappa (\mathcal{C}) \| \mathcal{C}\| ( \mathbf{1}(\mathcal{C} \nsim \gamma)-1)     } \vartheta_{\gamma_n,\kappa}(\gamma).
	 \end{split}\end{equation}
	 and combining~\eqref{eq: step 1 of last proof}, \eqref{eq: step 2 of last proof} and~\eqref{eq: new goal}, it follows that 
	 \begin{equation*}\begin{split}
		&\sum_{\gamma \in \Lambda^{\gamma_n}}  (\tanh 2\kappa)^{| \gamma|} e^{\sum_{\mathcal{C} \in \Xi} \Psi_{\beta_n,\kappa}(\mathcal{C}) ( \rho(\mathcal{C}^2(q_{\gamma_n+\gamma})) \mathbf{1}(\mathcal{C}^1 \nsim \gamma)-1)} 
		=
		C_{\beta_n,\kappa} \vartheta_{\gamma_n,\kappa}(\Lambda^{\gamma_n}) \bigl(1+o_n(1)\bigr). 	\end{split}\end{equation*} 
	 Recalling~\eqref{eq: the expansion} and~\eqref{eq: ising decay measure}, the desired conclusion immediately follows. 
\end{proof}


\begin{thebibliography}{99}
	
	 \bibitem{a2021} Adhikari, A., Wilson loop expectations for non-abelian gauge fields coupled to a Higgs boson at low and high disorder, Commun. Math. Phys. Volume 405:117, (2024).  
	 
    \bibitem{bf1983} Bricmont, J., Fr\"olich, J., An order parameter distinguishing between different phases pf lattice gauge theories with matter fields. Physics Letters B, 122(1), (1983), 73--77.
	
	\bibitem{civ2003} Campanino, M., Ioffe, D., Velenik,Y., Random path representation and sharp correlations asymptotics at high-temperatures, Adv. Stud. Pure Math.,  (2004), 29--52.

    \bibitem{sc2019} Cao, S., Wilson loop expectations in lattice gauge theories with finite gauge groups, Commun.\ Math.\ Phys.\ 380, (2020), 1439--1505.
    
	\bibitem{c2019} Chatterjee, S., Wilson loops in Ising lattice gauge theory, Commun. Math. Phys. 377, (2020), 307--340.
	
    \bibitem{f2022} Forsstr\"om, M.P. Decay of Correlations in Finite Abelian Lattice Gauge Theories. Commun. Math. Phys. 393, (2022), 1311--1346 .  
	
	\bibitem{f2024b} Forsstr\"om, M. P., Pure perimeter laws for Wilson lines observables,  preprint, available as arXiv:2409.20085 (2024).
	
	\bibitem{f2024} Forsstr\"om, M. P., The phase transition of the Marcu-Fredenhagen ratio in the abelian lattice Higgs model, Electron. J. Probab. 29, article no. 120, (2024), 1--36.
    
    

 
 

    
 
    
     



    
    \bibitem{f2022b} Forsstr\"om, M. P., Wilson lines in the Abelian lattice Higgs model, Commun. Math. Phys., Volume 405:275, (2024).
    
	\bibitem{flv2022} Forsstr\"om, M. P., Lenells, J., Viklund, F.,  Wilson lines in the lattice Higgs model at strong coupling. Annals of Applied Probability, 35(1), (2025), 590--634.

    \bibitem{flv2020} Forsstr\"om, M. P., Lenells, J., Viklund, F., Wilson loops in finite abelian lattice gauge theories, Annales de l'Institut Henri Poincar\'e Probabilit\'es et Statistiques Vol. 58, Issue 4, (2022), 2129--2164.
	  
     \bibitem{fv2023} Forsstr\"om, M. P., Viklund, F., Free energy and quark potential in Ising lattice gauge theory via cluster expansions, preprint available as arXiv:2304.08286, (2023).
     

   
    

    

    


    \bibitem{gr2002} Gliozzi, F. and Rago, A., Monopole clusters, center vortices, and confinement in a Z2 gauge-Higgs system, Phys. Rev. D 66,  (2002).

    
    
  

    \bibitem{w1971} Wegner, F. J.  Duality in Generalized Ising Models and Phase Transitions without Local Order Parameters, Journal of Mathematical Physics 12, 2259 (1971).

    \bibitem{w1974}  Wilson, K. G. Confinement of quarks, Phys. Rev. D, vol. 10, no 8 (1974), 2445 -- 2459.
    
    
\end{thebibliography}
\end{document}